\def\XX{{\mathcal{X}}}
\def\ZZ{{\mathcal{Z}}}
\def\UU{{\mathcal{U}}}
\def\R{{\mathbb R}}
\def\C{{\mathbb{C}}}
\def\Mh{{\mathfrak{M}_h}}
\def\d{\delta}
\def\D{\Delta}
\def\e{\varepsilon}
\def\f{\varphi}
\def\s{\sigma}
\def\a{\alpha}
\def\g{\gamma}
\def\l{\lambda}
\def\L{\Lambda}
\def\k{\varkappa}
\newtheorem{theorem}{Theorem}[section]
\newtheorem{lemma}[theorem]{Lemma}
\newtheorem{corollary}[theorem]{Corollary}
\newtheorem{remark}[theorem]{Remark}
\newtheorem{example}[theorem]{Example}
\numberwithin{equation}{section}
\newcommand{\supp}{\mathop{\rm supp}\nolimits}
\newcommand{\Var}{\mathop{\rm Var}\nolimits}
\begin{document}

\title[Vector-valued Riesz potentials]
{Vector-valued Riesz potentials:\\
Cartan type estimates and related capacities}

\author{V.~Eiderman}
\address{Vladimir Eiderman, Department of  Mathematics, University of Wisconsin-Madison, Madison, WI}
\email{eiderman@ms.uky.edu}
\author{F.~Nazarov}
\address{Fedor Nazarov, Department of  Mathematics, University of Wisconsin-Madison, Madison, WI}
\email{nazarov@math.wisc.edu}
\author{A.~Volberg}
\address{Alexander Volberg, Department of  Mathematics, Michigan State University and the University of Edinburgh}
\email{volberg@math.msu.edu, a.volberg@ed.ac.uk}
\thanks{Research of the authors was supported in part by NSF grants  DMS-0501067 (Nazarov, Volberg).\\
The first-named author is grateful to the Department of  Mathematics, Michigan State University, for its hospitality.}
\subjclass{Primary: 42B20. Secondary: 28A78, 30C85, 31B15, 31C45, 28A75, 28A80}
\begin{abstract}
Our aim is to give sharp upper bounds for the size of the set of points where the Riesz transform
of a linear combination of $N$ point masses is large. This size will be measured by the Hausdorff content with various
gauge functions. Among other things, we shall characterize all gauge functions for which the estimates do not blow up as
$N$ tends to infinity (in this case a routine limiting argument will allow us to extend our bounds to all finite
Borel measures). We also show how our techniques can be applied to estimates for certain capacities.
\end{abstract}

\maketitle

\section{Introduction}

The aim of this paper is to give sharp upper bounds for the size of the set of points where the singular $s$-Riesz transform of a
linear combination of $N$ point masses is large. This size will be measured by the Hausdorff content $M_h$ with various
gauge functions $h$. Among other things, we shall characterize all $h$ for which the estimates do not blow up as
$N\to+\infty$ (in this case a routine limiting argument will allow us to extend our bounds to all finite Borel measures).

The main motivation for this work came from \cite{AEi}, \cite{E1} where such bounds were obtained for the Cauchy transform
in $\C$, which is a special case of $s$-Riesz transform in $\R^d$ corresponding to $d=2$, $s=1$.

The main challenge in the generalizing the result to higher dimensions and different $s$ is the absence of the Menger's
curvature tool. The methods of the current paper allow us to cover the range $0<s<\infty$.

\section{Definitions and notation}

Let $\nu$ be a finite Borel measure in $\mathbb R^d$ (not necessarily positive). For $s>0$, put
$$
K^s(x)=\frac x{|x|^{s+1}},\quad x\in\mathbb R^d\setminus\{0\}.
$$
For $\e>0$, define the $\e$-truncated $s$-Riesz transform of $\nu$ by
$$
R_{\nu,\e}^s(x)=\int_{|y-x|>\e}K^s(y-x)\,d\nu(y).
$$
If the limit $\lim_{\e\to0+}R_{\nu,\e}^s(x)$ exists, we shall call it the $s$-Riesz transform of $\nu$ at $x$
and denote it by $R_{\nu}^s(x)$. For $0<s<d$, this is true almost everywhere in $\R^d$ with respect to Lebesgue measure
but this observation will be rather useless to us because sets of Lebesgue measure 0 can
easily have arbitrarily large Hausdorff content $M_h$ when $\lim_{t\to0+}\frac{h(t)}{t^{d}}=+\infty$ (we remind the
definition of $M_h$ below). Of course, in the case when $\nu$ is a finite linear combination of point masses,
$R_{\nu}^s(x)$ makes sense everywhere except finitely many points. Nevertheless, since we are aiming at
extending our results to all finite Borel measures whenever possible, we shall introduce one more quantity
that always makes sense, namely the so called maximal $s$-Riesz transform
$$
R_{\nu,\ast}^s(x)=\sup_{\e>0}|R_{\nu,\e}^s(x)|
$$
(note that $R_{\nu,\e}^s(x)$ and $R_{\nu}^s(x)$ are vectors and $R_{\nu,\ast}^s(x)$ is a number).

Take $P>0$. The sets whose size we will be interested in are
$$
\ZZ(\nu,P)=\{x\in \R^d: R_{\nu}^s(x) \text{ exists and }|R_{\nu}^s(x)|>P\}
$$
and
$$
\ZZ^\ast(\nu,P)=\{x\in \R^d: R_{\nu,\ast}^s(x)>P\}.
$$
Clearly, $\ZZ(\nu,P)\subset\ZZ^\ast(\nu,P)$.

By a {\it gauge} (or {\it measuring}) function, we shall understand any continuous strictly increasing function
$h:[0,+\infty)\to[0,+\infty)$ such that $h(0)=0$, $\lim_{r\to+\infty}h(r)=+\infty$ and $\frac{h(r)}{r^d}$
is non-increasing. The last condition, which may seem a regularity condition at the first glance, is actually
not a restriction at all. As we shall see, for any increasing function $h$ vanishing at the origin,
there exists another function $\widetilde h$ satisfying this restriction and such that the Hausdorff contents
$M_h$ and $M_{\tilde h}$ coincide up to a constant factor depending only on the dimension $d$.

The {\it Hausdorff content $M_h(G)$} of a set $G\subset\R^d$ is defined by
$$
M_h(G)=\inf\sum_j h(r_j),
$$
where the infimum is taken over all (at most countable) coverings of $G$ by balls of radii $r_j$.

By $c,\ C$ we denote various positive constants, and we set $B(x,r):=\{y\in\R^d:\ |y-x|<r\}$.
To show that the assumption that $\frac{h(r)}{r^d}$ is non-increasing is not restrictive, we consider a
continuous increasing function $h$ satisfying $h(0)=0$, without this property in general. Set
$$
\widetilde h(r)=r^d\inf_{0<t\le r}\frac{h(t)}{t^d}.
$$
Clearly, $\widetilde h(r)\le h(r)$, and $\frac{\widetilde h(r)}{r^d}$ is non-increasing. We are going to prove that
$$
M_{\widetilde h}(G)\le M_{h}(G)\le C_dM_{\widetilde h}(G)
$$
for any set $G$, with $C_d$ depending only on $d$. The first inequality is obvious. Choose $\e>0$. Let $\{B_j\}$,
$B_j=B(x_j,r_j)$, be a covering of $G$ by balls. For every ball $B_j$, we take $t_j$ such that $0<t_j\le r_j$ and
$$
\frac{\e}{2^j}+\widetilde h(r_j)>r_j^d\,\frac{h(t_j)}{t_j^d}.
$$
There is a constant $C_d$ depending only on $d$ such that $B_j$ can be covered by at most $C_d(\frac{r_j}{t_j})^d$
balls $B_{j,k}$ of radii $r_{j,k}=t_j$. For this new covering of $G$, we have
$$
\sum_{j,k}h(r_{j,k})=\sum_jC_d\bigg(\frac{r_j}{t_j}\bigg)^dh(t_j)<C_d\sum_j\widetilde h(r_j)+C_d\e.
$$
Thus, $M_{h}(G)\le C_d M_{\widetilde h}(G)$, and we are done.

\medskip

For nonnegative Borel measure $\mu$, we introduce the $\e$-truncated $s$-Riesz operator defined by
$$
\mathfrak{R}_{\mu,\e}^s f(x)=\int_{|y-x|>\e}K^s(y-x)f(y)\,d\mu(y),\quad f\in L^2(\mu),\quad \e>0.
$$
For every $\e>0$, the operator $\mathfrak{R}_{\mu,\e}^s$ is bounded on $L^2(\mu)$. We set
$$
\pmb|\mathfrak{R}_{\mu}^s\pmb|:=\sup_{\e>0}\|\mathfrak{R}_{\mu,\e}^s\|_{L^2(\mu)\to L^2(\mu)}.
$$

\section{Function $\Mh(\k,N)$}

Let $h$ be a measuring function, $\k>0,\ N\ge2$, and let $h^{-1}$ be inverse to $h$.
Define $\Mh(\k,N)$ as the unique solution $M>0$ of the equation
\begin{equation}\label{f31}
\k^2\int_{1/N}^1\bigg[\frac{t}{h^{-1}(Mt)^s}\bigg]^2\frac{dt}{t}=1.
\end{equation}
Note that the left hand side is a decreasing function of $M$ that tends to $+\infty$ at 0 and to 0 at $+\infty$,
so the definition makes sense. Moreover, it is clear that $\Mh$ is increasing in both $\k$ and $N$.

\begin{lemma}\label{le31} (Doubling property of $\Mh$). For each measuring function $h$ and $0<s<d$,
$$
\Mh(2\k,2N)\le C(s,d)\,\Mh(\k,N).
$$
\end{lemma}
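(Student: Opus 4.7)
The plan is to exploit monotonicity of the defining equation in $M$. Set $M_0=\Mh(\k,N)$, so that by definition
\begin{equation}\label{eq:plan-def}
\k^2\int_{1/N}^1\bigg[\frac{t}{h^{-1}(M_0t)^s}\bigg]^2\frac{dt}{t}=1.
\end{equation}
The left-hand side of \eqref{f31} is strictly decreasing in $M$, so in order to conclude that $\Mh(2\k,2N)\le CM_0$ with some $C=C(s,d)$ it suffices to verify the single inequality
$$
(2\k)^2\int_{1/(2N)}^1\bigg[\frac{t}{h^{-1}(CM_0t)^s}\bigg]^2\frac{dt}{t}\le 1.
$$
The main ingredient is the quantitative growth property of $h^{-1}$ that follows from the gauge-function hypothesis ``$h(r)/r^d$ non-increasing'': that condition is equivalent to $v\mapsto h^{-1}(v)/v^{1/d}$ being non-decreasing, which yields
\begin{equation}\label{eq:plan-growth}
h^{-1}(\l v)\ge \l^{1/d}h^{-1}(v)\qquad\text{for all }\l\ge 1,\ v>0.
\end{equation}
Thus for every $C\ge 1$ one has $h^{-1}(CM_0t)^s\ge C^{s/d}h^{-1}(M_0t)^s$, and so the integrand shrinks by a factor of $C^{-2s/d}$ when $M_0$ is replaced by $CM_0$.

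Next I split the integral over $[1/(2N),1]$ into the ``old'' piece $[1/N,1]$ and the ``new'' piece $[1/(2N),1/N]$. On the old piece, \eqref{eq:plan-growth} gives directly
$$
4\k^2\int_{1/N}^1\bigg[\frac{t}{h^{-1}(CM_0t)^s}\bigg]^2\frac{dt}{t}\le 4C^{-2s/d}\cdot \k^2\int_{1/N}^1\bigg[\frac{t}{h^{-1}(M_0t)^s}\bigg]^2\frac{dt}{t}=4C^{-2s/d},
$$
where the last equality is \eqref{eq:plan-def}. On the new piece, the key move is the substitution $u=2t$ (so $dt/t=du/u$), which transports $[1/(2N),1/N]$ onto $[1/N,2/N]\subset[1/N,1]$ (we use $N\ge 2$); after applying \eqref{eq:plan-growth} with $\l=C/2\ge 1$ one gets
$$
4\k^2\int_{1/(2N)}^{1/N}\bigg[\frac{t}{h^{-1}(CM_0t)^s}\bigg]^2\frac{dt}{t}
\le (C/2)^{-2s/d}\cdot\k^2\int_{1/N}^{2/N}\bigg[\frac{u}{h^{-1}(M_0u)^s}\bigg]^2\frac{du}{u}\le(C/2)^{-2s/d},
$$
the last step again being a trivial consequence of \eqref{eq:plan-def}. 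Adding the two bounds yields
$$
(2\k)^2\int_{1/(2N)}^1\bigg[\frac{t}{h^{-1}(CM_0t)^s}\bigg]^2\frac{dt}{t}\le 4C^{-2s/d}+(C/2)^{-2s/d},
$$
which is $\le 1$ as soon as $C$ is a sufficiently large constant depending only on $s$ and $d$ (any $C\ge 8^{d/(2s)}$ will do).

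The only genuine obstacle is the presence of the enlarged lower limit $1/(2N)$, since all of the information \eqref{eq:plan-def} about $M_0$ lives on the interval $[1/N,1]$. The trick that resolves this is the change of variables $u=2t$, which respects the scale-invariant measure $dt/t$ and absorbs the doubling of $N$ at the modest cost of a factor $(C/2)^{-s/d}$. Once this is seen, the estimate on $[1/N,1]$ is immediate from \eqref{eq:plan-growth} and the two pieces combine to give the doubling constant $C(s,d)$.
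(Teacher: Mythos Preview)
Your argument is correct. Both your proof and the paper's hinge on the same monotonicity fact coming from the gauge hypothesis (your inequality $h^{-1}(\lambda v)\ge\lambda^{1/d}h^{-1}(v)$ is exactly the statement that the paper's auxiliary function $H(t)=t^{s/d}/h^{-1}(Mt)^s$ is non-increasing), so the underlying idea is the same. The organization differs: the paper makes a single global substitution $t\mapsto Ct$, which pushes the integral to $[1/N,C]$ and then compares the ``overflow'' piece $[1,C]$ to the piece $[1/2,1]$ via explicit upper and lower bounds in terms of $H(1)$; you instead split $[1/(2N),1]$ into the old interval $[1/N,1]$ and the short new interval $[1/(2N),1/N]$, handle the first directly by the growth inequality, and dispatch the second with the local substitution $u=2t$. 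Your route is a bit more direct and yields an explicit admissible constant, while the paper's version is phrased so that the same computation would immediately handle $\Mh(A\k,AN)$ for any fixed $A$; in substance the two arguments are equivalent.
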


\begin{proof} It is enough to show that for sufficiently large $C$, one has
$$
4\int_{1/(2N)}^1\bigg[\frac{t}{h^{-1}(CMt)^s}\bigg]^2\frac{dt}{t}\le
\int_{1/N}^1\bigg[\frac{t}{h^{-1}(Mt)^s}\bigg]^2\frac{dt}{t}.
$$
Assuming that $C>2$, and making the change of variable $t\mapsto Ct$, we see that the left hand side does not exceed
$$
\frac4{C^2}\int_{1/N}^C\bigg[\frac{t}{h^{-1}(Mt)^s}\bigg]^2\frac{dt}{t}\,.
$$
Now notice that the function
$$
H(t)=\frac{t^{s/d}}{h^{-1}(Mt)^s}
$$
is non-increasing. Therefore,
$$
\int_1^C\left[t^{1-\frac sd}H(t)\right]^2\frac{dt}{t}\le
H(1)^2\int_1^Ct^{2(1-\frac sd)}\frac{dt}{t}\le
H(1)^2\frac1{2(1-\frac sd)}C^{2(1-\frac sd)}\,.
$$
Also,
$$
\int_{1/N}^1\left[t^{1-\frac sd}H(t)\right]^2\frac{dt}{t}\ge
H(1)^2\int_{1/2}^1t^{2(1-\frac sd)}\frac{dt}{t}\ge
H(1)^2\cdot\frac38\,.
$$
Hence,
$$
\frac4{C^2}\int_{1/N}^C\bigg[\frac{t}{h^{-1}(Mt)^s}\bigg]^2\frac{dt}{t}\le
\frac4{C^2}\bigg[1+\frac4{3(1-\frac sd)}C^{2(1-\frac sd)}\bigg]
\int_{1/N}^1\bigg[\frac{t}{h^{-1}(Mt)^s}\bigg]^2\frac{dt}{t}\,,
$$
and it remains to note that the factor in front of the integral on the right hand side tends to 0 as $C\to\infty$.
\end{proof}

\begin{lemma}\label{le32} Let $\k>0,\ N\ge 2,\ c\in(0,1)$, and $0<s<d$. If
$$
M\le C\k\left[\int_{h^{-1}(\frac cNM)}^{h^{-1}(M)}\bigg(\frac{h(y)}{y^s}\bigg)^2\frac{dy}{y}\right]^{1/2}\,,
$$
then $M\le C'(c,C,s,d)\,\Mh(\k,N)$.
\end{lemma}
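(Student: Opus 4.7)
The plan is to apply integration by parts to the integral in the hypothesis so as to expose pieces that resemble the integral defining $\Mh$, and then split into cases depending on which piece dominates. Write $\phi(y):=h(y)/y^s$ and set $y_0:=h^{-1}(cM/N)$, $y_1:=h^{-1}(M)$. Integrating the differential identity
$$
d\Bigl(\frac{h(y)^2}{y^{2s}}\Bigr)=\frac{2h(y)}{y^{2s}}\,dh(y)-\frac{2sh(y)^2}{y^{2s+1}}\,dy
$$
(which holds for smooth $h$ and extends to our $h$ via a standard regularization, or by interpreting the integral in the Riemann--Stieltjes sense) yields
$$
J(M):=\int_{y_0}^{y_1}\frac{h(y)^2}{y^{2s+1}}\,dy=\frac{\phi(y_0)^2-\phi(y_1)^2}{2s}+\frac{M^2}{s}\,\widetilde I_1(M),
$$
where $\widetilde I_1(M):=\int_{c/N}^{1}[t/h^{-1}(Mt)^s]^2\,dt/t$ is exactly the integral defining $\Mh(\k,N/c)$; indeed, after substituting $u=h(y)$ the second term becomes $\frac{1}{s}\int_{cM/N}^{M}u\,du/h^{-1}(u)^{2s}$, which equals $\frac{M^2}{s}\widetilde I_1(M)$. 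Dropping the non-positive summand $-\phi(y_1)^2/(2s)$ and invoking the hypothesis, we arrive at
$$
\frac{M^2}{C^2\k^2}\le\frac{\phi(y_0)^2}{2s}+\frac{M^2\widetilde I_1(M)}{s},
$$
so one of the two summands on the right is at least $M^2/(2C^2\k^2)$.

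In \emph{Case A} the second term dominates: $\widetilde I_1(M)\ge s/(2C^2\k^2)$. Setting $\k':=C\k\sqrt{2/s}$ we get $(\k')^2\widetilde I_1(M)\ge 1$, hence $M\le\Mh(\k',N/c)$; iterated application of the doubling Lemma~\ref{le31}, together with monotonicity of $\Mh$ in each argument, then supplies $M\le C_A(c,C,s,d)\,\Mh(\k,N)$. In \emph{Case B} the boundary term dominates: $\phi(y_0)^2\ge sM^2/(C^2\k^2)$, which, because $\phi(y_0)=(cM/N)/y_0^s$, rearranges to $y_0^{2s}\le c^2C^2\k^2/(sN^2)$. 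To convert this smallness of $y_0$ into a bound on $M$, pick the trial scale $M_0:=cM/C_1$ with $C_1:=\min\bigl(\sqrt{1+2c^2C^2/s},\,N\bigr)\ge 1$. For every $t\in[1/N,C_1/N]$ one has $M_0t\le h(y_0)$, so $h^{-1}(M_0t)\le y_0$ and $[t/h^{-1}(M_0t)^s]^2\ge t^2/y_0^{2s}$; integrating this lower bound and inserting the bound on $y_0^{2s}$ produces
$$
\k^2I_1(M_0)\ge\frac{(C_1^2-1)s}{2c^2C^2}.
$$
When $C_1=\sqrt{1+2c^2C^2/s}$ the right side equals $1$, so $M_0\le\Mh(\k,N)$ and $M=C_1M_0/c\le(1/c)\sqrt{1+2c^2C^2/s}\,\Mh(\k,N)$. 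In the remaining regime $2\le N<\sqrt{1+2c^2C^2/s}$ one takes $C_1=N$, still obtains $\k^2I_1(M_0)\ge\delta(c,C,s)>0$, and absorbs the factor $1/\sqrt\delta$ into the final constant via iterated doubling.

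The main obstacle is precisely Case B: the boundary term $\phi(y_0)^2$ produced by the integration by parts is not a priori controlled by any portion of the integral defining $\Mh$, so a direct comparison fails. The key realization is that $y_0$ being small forces the integrand $[t/h^{-1}(M_0t)^s]^2$ to be large on a sizeable sub-interval of $[1/N,1]$ for a well-chosen $M_0$; the delicate point is that the naive choice $M_0=cM/N$ (for which $h^{-1}(M_0)=y_0$ exactly) yields only the $N$-dependent bound $M\le(N/c)\Mh$, so one must instead take $M_0$ slightly larger, namely $cM/\sqrt{1+2c^2C^2/s}$, in order to obtain an $N$-independent constant in the final bound.
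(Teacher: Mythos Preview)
Your proof is correct but takes a genuinely different route from the paper. The paper avoids integration by parts entirely: it decomposes $[h^{-1}(cM/N),h^{-1}(M)]$ into dyadic shells $[h^{-1}(2^{-j}M),h^{-1}(2^{-(j-1)}M)]$, bounds each piece of the $y$-integral by $C(s)\bigl[2^{-j}M/h^{-1}(2^{-j}M)^s\bigr]^2$ (using only $h(y)\le 2^{-(j-1)}M$ on the shell together with the trivial estimate $\int_a^\infty y^{-2s-1}\,dy=(2s)^{-1}a^{-2s}$), and then converts each such point value back into a piece $\int_{2^{-(j+1)}}^{2^{-j}}[t/h^{-1}(Mt)^s]^2\,dt/t$ of the defining integral. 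Summing over $j$ yields
\[
C''(s)\,C^2\k^2\int_{c/(4N)}^{1}\bigg[\frac{t}{h^{-1}(Mt)^s}\bigg]^2\frac{dt}{t}\ge 1
\]
in one line, with no case split and no boundary term to dispose of; a single appeal to the doubling Lemma~\ref{le31} then finishes.

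Your integration-by-parts approach is more analytic in spirit and does produce the ``main term'' $\widetilde I_1(M)$ in one stroke, but at the price of a boundary contribution $\phi(y_0)^2/(2s)$ which you must then control by a separate, somewhat delicate argument (the auxiliary scale $M_0=cM/C_1$ and the sub-case $N<\sqrt{1+2c^2C^2/s}$). The paper's dyadic method is more elementary---it never differentiates $h$, needs no Stieltjes justification, and is shorter---while your method has the conceptual merit of making the substitution $u=h(y)$ explicit and showing exactly where the boundary effect enters. Both ultimately rely on Lemma~\ref{le31} to reduce the perturbed parameters back to $(\k,N)$.
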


\begin{proof}
Let $n$ be the least positive integer satisfying $2^{-n}<c/N$. Note that
\begin{multline*}
\int_{h^{-1}(\frac cNM)}^{h^{-1}(M)}\bigg[\frac{h(y)}{y^s}\bigg]^2\frac{dy}{y}
\le \int_{h^{-1}(2^{-n}M)}^{h^{-1}(M)}\bigg[\frac{h(y)}{y^s}\bigg]^2\frac{dy}{y}\\
=\sum_{j=1}^n\int_{h^{-1}(2^{-j}M)}^{h^{-1}(2^{-(j-1)}M)}\bigg[\frac{h(y)}{y^s}\bigg]^2\frac{dy}{y}
\le C(s)\sum_{j=1}^n\bigg[\frac{2^{-j}M}{h^{-1}(2^{-j}M)^s}\bigg]^2\\
\le C''(s)M^2\sum_{j=1}^n\int_{2^{-(j+1)}}^{2^{-j}}\bigg[\frac{t}{h^{-1}(tM)^s}\bigg]^2\frac{dt}{t}
\le C''(s)M^2\int_{c/(4N)}^{1}\bigg[\frac{t}{h^{-1}(Mt)^s}\bigg]^2\frac{dt}{t}.
\end{multline*}
Thus the inequality in the condition of the lemma implies that
$$
\sqrt{C''(s)}C\k\left[\int_{c/(4N)}^{1}\bigg(\frac{t}{h^{-1}(Mt)^s}\bigg)^2\frac{dt}{t}\right]^{1/2}\ge1,
$$
i.e.,
$$
M\le\Mh(\sqrt{C''(s)}C\k,\tfrac4cN)\le C'(c,C,s,d)\,\Mh(\k,N).
$$
\end{proof}

\begin{remark}\label{re33} Let $c,C>0,\ \k>0,\ N>2C$, and $0<s<d$. If
$$
M\ge c\k\left[\int_{h^{-1}(\frac CNM)}^{h^{-1}(M)}\bigg(\frac{h(y)}{y^s}\bigg)^2\frac{dy}{y}\right]^{1/2}\,,
$$
then $M\ge c'(c,C,s,d)\,\Mh(\k,N)$.
\end{remark}

\begin{proof} For $y=h^{-1}(Mt)$,

\begin{equation}\label{f32}
\begin{split}
\int_{h^{-1}(\frac CNM)}^{h^{-1}(M)}\bigg[\frac{h(y)}{y^s}\bigg]^2\frac{dy}{y}
&=M^2\int_{C/N}^1\bigg[\frac{t}{h^{-1}(Mt)^s}\bigg]^2\frac{dh^{-1}(Mt)}{h^{-1}(Mt)}\\
&\ge\frac{M^2}{d}\int_{C/N}^1\bigg[\frac{t}{h^{-1}(Mt)^s}\bigg]^2\frac{dt}t
\end{split}
\end{equation}
because, since $t\mapsto[\log h^{-1}(Mt)-\frac1d\log(Mt)]$ is non-decreasing, we have
$$
\frac{dh^{-1}(Mt)}{h^{-1}(Mt)}\ge\frac1d\,\frac{dt}t.
$$
Thus, the condition in the remark implies
$$
\frac1{\sqrt d}c\k\left[\int_{C/N}^1\bigg(\frac{t}{h^{-1}(Mt)^s}\bigg)^2\frac{dt}t\right]^{1/2}\le 1,
$$
i.e.,
$$
M\ge\Mh\bigg(\frac{c}{\sqrt{d}}\k,\frac NC\bigg)\ge c\,\Mh(\k,N).
$$
\end{proof}
Hence, the condition in Lemma \ref{le32} is essentially equivalent to $M\le C\,\Mh(\k,N)$.

\section{Main results}

We formulate our results for $0<s<d$ and for $s\ge d$ separately. The main and the most difficult case is $0<s<d$.

\begin{theorem}\label{th41}
Let $N\ge 2$, $P>0,\ s\in(0,d)$. Let $h$ be any measuring function.

I) There exists $C=C(s,d)>0$ such that for every measure $\nu$ that is a linear combination of $N$ Dirac point
masses, the inequality

\begin{equation}\label{f41}
M_h(\ZZ^\ast(\nu,P))\le C\,\Mh\bigg(\frac{\|\nu\|}{P},N\bigg)
\end{equation}
holds.

II) There exists $c=c(s,d)>0$ such that, for every $\eta>0$, one can find a measure $\nu$ that is a linear combination of $N$ Dirac point
masses and such that
$\|\nu\|=\eta$ and

\begin{equation}\label{f42}
M_h(\ZZ(\nu,P))\ge c\,\Mh\bigg(\frac{\|\nu\|}{P},N\bigg).
\end{equation}
\end{theorem}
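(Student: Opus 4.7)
The plan is to handle Parts I and II by separate methods, exploiting a reduction to scalar potentials for the upper bound and an explicit multi-scale construction for the lower bound.

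For Part I, the key observation is that for any $\e>0$, $|R^s_{\nu,\e}(x)|\le\int_{|y-x|>\e}|y-x|^{-s}\,d|\nu|(y)\le I^s|\nu|(x)$, where $I^s\mu(x):=\int|y-x|^{-s}\,d\mu(y)$ and $\mu:=|\nu|$ is a positive linear combination of $N$ point masses with $\|\mu\|=\|\nu\|$. Consequently $\ZZ^\ast(\nu,P)\subset\{x:I^s\mu(x)>P\}$, and it suffices to bound the Hausdorff content of the latter scalar set. For each $x$ in this set, I would pick a critical radius $r(x)$ such that $\mu(B(x,r(x)))\ge c(s)\,P\,r(x)^s$; such an $r(x)$ exists because $I^s\mu(x)>P$ forces mass concentration at some scale, as seen from a dyadic layer-cake decomposition of $I^s\mu$. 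Applying Vitali's $5r$-covering lemma to $\{B(x,r(x))\}$ yields a disjoint subfamily $\{B(x_j,r_j)\}$ whose quintuple dilations cover the bad set; there are at most $N$ such balls (each contains at least one atom of $\mu$), and the mass-concentration lower bound combined with disjointness gives $\sum_j r_j^s\le\|\nu\|/(cP)$.

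The principal analytic obstacle in Part I is then purely combinatorial: to convert the paired constraints (at most $N$ balls, total $r^s$-mass at most $\|\nu\|/(cP)$) into the estimate $\sum_j h(5r_j)\le C\,\Mh(\|\nu\|/P,N)$. I would group the $r_j$ dyadically into scale classes $r_j\asymp 2^{-k}r_0$, bound the count at each scale by $n_k\le\min(N,\|\nu\|/(cPr_k^s))$, and apply Cauchy--Schwarz so that the resulting sum becomes a Riemann approximation of the integral in (3.1). Remark~\ref{re33} then identifies the resulting quantity with $C\,\Mh(\|\nu\|/P,N)$ up to dimensional constants.

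For Part II, construct $\nu$ recursively to mimic the multi-scale structure of $\Mh$. Set $M:=\Mh(\eta/P,N)$, choose scales $r_k:=h^{-1}(M\cdot 2^{-k})$ for $k=0,\dots,K$ with $2^K\asymp N$, and build a binary tree of balls so that level $k$ consists of $2^k$ well-separated balls of radius $r_k$, each nested inside its parent. Place in each leaf ball an atom of mass $\pm\eta/N$, with a sign pattern (e.g.\ collinear leaves alternating along a fixed axis) chosen so that the Riesz transform contributions from the atoms add coherently along a preferred direction rather than cancel. A scale-by-scale estimate then shows $|R^s_\nu(x)|>P$ on a neighborhood of the leaf centers, because the level-$k$ contribution has magnitude of order $\eta/(N r_k^s)$ times a combinatorial factor whose $\ell^2$ sum over $k$ is proportional to the integral in (3.1) and hence to $P$ by the defining equation of $\Mh$. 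The lower bound $M_h(\ZZ(\nu,P))\ge cM$ follows by noting that any covering of the nested ball system by balls of radii $\rho_i$ must, at every scale $r_k$, account for all $2^k$ level-$k$ clusters, and optimizing the resulting constrained minimization reproduces the definition of $\Mh$.

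The hardest part is the tight matching between the $\Mh$ quantity and the sharp multi-scale bounds: both the dyadic summation in Part I and the cancellation-preserving sign choice in Part II require careful bookkeeping to align constants with the defining integral equation of $\Mh$. Everything else—the reduction to the scalar potential, Vitali extraction, and the construction of a binary tree of balls—is relatively mechanical once the extremal arrangement is identified.
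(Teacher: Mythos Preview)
Your Part~I approach has a genuine gap that cannot be repaired within its framework. The opening step, $|R^s_{\nu,\e}(x)|\le I^s|\nu|(x)$, throws away all cancellation in the vector transform and reduces the problem to bounding $M_h(\XX(|\nu|,P))$; but the paper explicitly records (Example~\ref{ex42} and \eqref{f43}--\eqref{f44}) that the sharp bound for $\XX$ is strictly worse than $\Mh(\|\nu\|/P,N)$: for $h(t)=t^s$ one has $\Mh\asymp\k\sqrt{\ln N}$ whereas $M_h(\XX)\asymp\k\ln N$. Your Vitali step also fails as written. Take $d=1$, $s=1$, and $\mu=\sum_{k=1}^n 2^{-k}\delta_{2^{-k}}$; then $\mu(B(0,r))\asymp r$ for $2^{-n}<r<1$, so $\sup_r\mu(B(0,r))/r\asymp1$ while $I^1\mu(0)\asymp n$, and no radius with $\mu(B(0,r))\ge cPr$ exists once $P\gg1$. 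A dyadic pigeonhole only recovers $\mu(B(x,r))\ge (cP/\log N)r^s$ at best, which feeds back to a bound off by $\sqrt{\log N}$. The paper's route is essentially different: one builds a Frostman-type measure $\mu$ supported on $\ZZ^\ast(\nu,0.8P)$ (Lemma~\ref{le62}), controls the \emph{operator} norm $\pmb|\mathfrak{R}^s_\mu\pmb|$ via the $L^2$ estimate of Theorem~\ref{th46} (whose proof rests on the kernel symmetrization of Lemma~\ref{le51} plus a non-homogeneous $T1$ theorem), and then applies the weak-type $(1,1)$ inequality of Theorem~\ref{th45}. The $L^2$ structure in Theorem~\ref{th46} is precisely what produces the square inside $\int(h(t)/t^s)^2\,dt/t$ and hence the $\sqrt{\ln N}$.

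Your Part~II sketch is not obviously wrong but differs substantially from the paper and is not justified as stated. The paper uses \emph{positive} masses and randomizes the \emph{positions} of sub-blocks at each generation (Section~8): Lemma~\ref{le84} shows that a random shift of a positive block gives the Riesz transform at a fixed point variance $\ge c\theta^2$, and the characteristic-function estimate of Lemma~\ref{le83} then forces $|a+\sum_k\xi_k|\ge\delta(\sum_k\theta_k^2)^{1/2}$ with probability $\ge\delta$, \emph{uniformly in the deterministic remainder $a$}. This last uniformity is the crux, because the ``self'' contributions and the non-randomized siblings form an uncontrolled vector $a$ that could in principle cancel any fixed arrangement. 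Your deterministic sign pattern would have to exhibit, for a set of $x$ of $h$-content $\ge cM$, an explicit lower bound on the full sum including all such terms; the assertion that alternating signs make the levels ``add coherently'' is not substantiated. Also, the level-$k$ contribution has order $\eta\,2^{-k}/r_k^s$ (mass $\sim\eta\,2^{-k}$ in the sibling block at distance $\sim r_k$), not $\eta/(Nr_k^s)$ as you wrote.
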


Theorem \ref{th41} is a generalization of the corresponding results in \cite{AEi}, \cite{E1}, \cite{E2}.
In some important cases one can derive explicit estimates for $M_h(\ZZ^\ast(\nu,P))$ from (\ref{f41}).

\begin{example}\label{ex42}
For $h(t)=t^\beta$, $s<d$ and $\beta\le d$, easy calculations yield

\begin{equation}\label{f43}
M_h(\ZZ^\ast(\nu,P))\le\left\{\begin{array}{ll}
C\biggl(\dfrac{\|\nu\|}{P}\biggr)^{\beta/s}\biggl(\dfrac{N^{2(s-\beta)/\beta}-1}{s-\beta}\biggr)^{\beta/(2s)},&\beta\ne s,\\
C\dfrac{\|\nu\|}{P}(\ln N)^{1/2},&\beta=s,\ N\ge2,
\end{array}\right.
\end{equation}
where $C$ depends only on $d$ and $s$.
\end{example}

For $\beta>d$ the $h$-content of every set in $\R^d$ is zero.

It is interesting to compare inequalities (\ref{f43}) with estimates for the $h$-content of the set
$$
\XX(|\nu|,P):=\biggl\{x\in \R^d: \int|K^s(y-x)|\,d|\nu|(y)=\int\frac1{|y-x|^s}\,d|\nu|(y)>P\biggr\}.
$$
Obviously, $\ZZ^\ast(\nu,P)\subset\XX(|\nu|,P)$. Corollary 1.2 in \cite{E2} yields the following estimate: for $h(t)=t^\beta$ and $\beta\le d$,

\begin{equation}\label{f44}
M_h(\XX(|\nu|,P))\le\left\{\begin{array}{ll}
C\biggl(\dfrac{\|\nu\|}{P}\biggr)^{\beta/s}\biggl(\dfrac{N^{(s-\beta)/\beta}-1}{s-\beta}\biggr)^{\beta/s},&\beta\ne s,\\
C\dfrac{\|\nu\|}{P}\ln N,&\beta=s,\ N\ge2,
\end{array}\right.
\end{equation}
with another constant $C$ depending only on $d$ and $s$.

If $s<\beta$, then the estimate
$$
M_h(\XX(|\nu|,P))\le C\biggl(\frac{\|\nu\|}{P}\cdot\dfrac{1}{\beta-s}\biggr)^{\beta/s}
$$
(the limiting case of (\ref{f44}) as $N\to+\infty$) holds for every (not necessarily discrete) measure $\nu$.
The exponent 1/2 in (\ref{f43}) reflects the mutual annihilation of terms in the passage from the sum of moduli to the modulus of the sum of the corresponding fractions.

Consider now the case when

\begin{equation}\label{f45}
\int_0\bigg[\frac{h(t)}{t^{s}}\bigg]^2\frac{dt}t<\infty.
\end{equation}

Under this assumption we obtain estimates for the $h$-content of $M_h(\ZZ^\ast(\nu,P))$ not only for discrete
measures but also for arbitrary finite Borel measures $\nu$. (Note that for the function $h(t)=t^\beta$ with
$0<\beta\le d$, the condition (\ref{f45}) holds iff $s<\beta\le d$. This is exactly the case when the right-hand sides
of (\ref{f43}) and (\ref{f44}) do not blow up as $N\to+\infty$.)

The condition (\ref{f45}) implies that for every $M>0$,
$$
\int_0^1\bigg[\frac{t}{h^{-1}(Mt)^s}\bigg]^2\frac{dt}{t}<\infty
$$
(see (\ref{f32})). Since this integral is a decreasing function of $M$ that tends to $+\infty$ at 0 and to 0 at $+\infty$,
the equation
$$
\k^2\int_0^1\bigg[\frac{t}{h^{-1}(Mt)^s}\bigg]^2\frac{dt}{t}=1
$$
has the unique solution $M>0$, which we denote by $\Mh(\k,\infty)$.

\begin{theorem}\label{th43}
Let $\nu$ be a Borel measure (generally, complex-valued) with finite total variation, and let $h$ be a measuring function
satisfying (\ref{f45}). Then for any $P>0$,

\begin{equation}\label{f46}
M_h(\ZZ^\ast(\nu,P))\le C\,\Mh\bigg(\frac{\|\nu\|}{P},\infty\bigg),
\end{equation}
where $C$ depends only on $d$ and $s$.
\end{theorem}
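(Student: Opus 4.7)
The plan is to derive Theorem \ref{th43} from the discrete estimate (Theorem \ref{th41}) by approximating $\nu$ by linear combinations of $N$ point masses and sending $N\to\infty$, using that (\ref{f45}) guarantees $\Mh(\k,N)\le\Mh(\k,\infty)<\infty$ uniformly in $N$.

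After a routine reduction to the case of compactly supported $\nu$, for each integer $N\ge 2$ I would partition $\supp\nu$ into $N$ Borel sets $E_j$ of diameter at most $\delta_N\to 0$, pick $y_j\in E_j$, and form the discrete measure $\nu_N=\sum_{j=1}^{N}\nu(E_j)\delta_{y_j}$, so that $\|\nu_N\|\le\|\nu\|$ and $\nu_N$ converges to $\nu$ weakly. Theorem \ref{th41} applied at the level $P/2$, combined with the monotonicity of $\Mh$ in $N$ and one application of Lemma \ref{le31}, gives the \emph{uniform} bound
$$
M_h\bigl(\ZZ^\ast(\nu_N,P/2)\bigr)\le C\,\Mh\!\left(\tfrac{2\|\nu\|}{P},N\right)\le C'\,\Mh\!\left(\tfrac{\|\nu\|}{P},\infty\right).
$$

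Next I would transfer the bound from $\nu_N$ to $\nu$. For any $x\notin\supp\nu$ with $R^s_{\nu,\ast}(x)>P$, pick $\e_x>0$ such that $|R^s_{\nu,\e_x}(x)|>P$ and $\nu(\partial B(x,\e_x))=0$ (possible since only countably many spheres carry $\nu$-mass); then weak convergence of $\nu_N$ together with the boundedness and continuity of $K^s(\cdot-x)$ on $\{|y-x|\ge\e_x/2\}$ yields $R^s_{\nu_N,\e_x}(x)\to R^s_{\nu,\e_x}(x)$, so $x\in\ZZ^\ast(\nu_N,P/2)$ for all $N$ large enough. Hence $\ZZ^\ast(\nu,P)\setminus\supp\nu\subset\liminf_{N}\ZZ^\ast(\nu_N,P/2)$, and a Fatou-type inequality for $M_h$ (a direct consequence of its definition as an infimum over coverings) delivers the desired bound; $\supp\nu$ itself, being compact, can be covered by finitely many balls of arbitrarily small total $h$-content.

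The main technical obstacle is that the threshold $\e_x$, and the corresponding $N$ past which $x\in\ZZ^\ast(\nu_N,P/2)$, are not uniform in $x$ across $\ZZ^\ast(\nu,P)$. To handle this cleanly, I would introduce the filtration $\ZZ^\ast_\eta(\nu,P):=\{x:\sup_{\e\ge\eta}|R^s_{\nu,\e}(x)|>P\}$ for $\eta>0$; on each such open set the convergence $R^s_{\nu_N,\e}(x)\to R^s_{\nu,\e}(x)$ can be made uniform on compact subsets away from $\supp\nu$, so the above argument yields $M_h(\ZZ^\ast_\eta(\nu,P))\le C'\Mh(\|\nu\|/P,\infty)$ uniformly in $\eta$. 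Since $\ZZ^\ast(\nu,P)=\bigcup_{\eta>0}\ZZ^\ast_\eta(\nu,P)$ is an increasing union, verifying that $M_h$ is continuous along such monotone families (modulo a dimensional constant) completes the passage to the limit.
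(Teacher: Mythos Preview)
Your broad outline --- approximate $\nu$ by discrete measures, apply Theorem \ref{th41}, use the filtration $\ZZ^\ast_\eta(\nu,P)$ and the monotone continuity of $M_h$ (up to a constant) to pass to the limit --- is exactly the paper's strategy. But there is a genuine gap: the claim that ``$\supp\nu$ itself, being compact, can be covered by finitely many balls of arbitrarily small total $h$-content'' is false. Take for instance $\nu$ equal to Lebesgue measure restricted to the unit ball and $h(t)=t^\beta$ with $s<\beta\le d$ (so (\ref{f45}) holds); then $M_h(\supp\nu)\asymp 1$. For small $P$ the set $\ZZ^\ast(\nu,P)$ can contain a large portion of $\supp\nu$, so you cannot simply discard it. Your weak-convergence argument genuinely breaks down at such points, because mass of $|\nu|$ may sit arbitrarily close to the sphere $\{|y-x|=\e\}$ and the truncated transforms $R^s_{\nu_N,\e}(x)$ need not approach $R^s_{\nu,\e}(x)$.

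The paper repairs this with a different excision. Rather than removing all of $\supp\nu$, it removes only the ``high-density'' set
\[
\ZZ_1=\Big\{x:\ |\nu|(B(x,r))>C_2^{-1}P\rho_0^{-1}h(r)\text{ for some }r\Big\},
\]
borrowed from the construction in Lemma \ref{le62}; this set satisfies $M_h(\ZZ_1)<0.3\mathbf M$. On the complement of $\ZZ_1$ one has the uniform density bound $|\nu|(B(x,r))<0.1Pr^s$ for \emph{all} $r>0$, and it is precisely this bound that forces the discrete approximation to satisfy $|R^s_{\nu,\e}(x)-R^s_{\nu',\e}(x)|<0.2P$ uniformly for $x\notin\ZZ_1$ and $\e\ge\delta$ --- including points $x\in\supp\nu$. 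In other words, the density bound is what controls the boundary-of-sphere error that your argument cannot handle. Once you replace your excision of $\supp\nu$ by the excision of $\ZZ_1$ and invoke (9.2), the rest of your scheme goes through essentially as written.
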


Theorem \ref{th43} can be viewed as a limiting case of Theorem \ref{th41} as $N\to\infty$.

\medskip

Let us consider the case $s\ge d$.

\begin{theorem}\label{th44}
Let $N \ge 2$, $P>0,\ s\ge d$, and let $h$ be any measuring function.
There exists $C=C(s,d)>0$ such that for every measure $\nu$ that is a linear combination of $N$ Dirac point
masses, the inequality

\begin{equation}\label{f48}
M_h(\ZZ^\ast(\nu,P))\le CNh\bigg(\bigg(\frac{\|\nu\|}{PN}\bigg)^{1/s}\bigg)
\end{equation}
holds.
\end{theorem}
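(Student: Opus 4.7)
The plan is to reduce the problem to bounding $M_h(\XX(|\nu|,P))$, using the pointwise bound $|R^s_{\nu,\ast}(x)| \le \sum_j |a_j|/|x-x_j|^s = U^s_{|\nu|}(x)$ which yields $\ZZ^\ast(\nu,P) \subset \XX(|\nu|,P)$. The hypothesis $s \ge d$ will enter crucially through concavity properties of $h$ and of $t \mapsto t^{d/s}$.

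The core step is a Vitali-type covering of $\XX$. For each $x \in \XX$, I would locate a scale $r(x) > 0$ at which the mass condition $|\nu|(B(x,r(x))) \ge c\,P\,r(x)^s$ holds for some constant $c = c(s,d) > 0$. This is extracted from a dyadic decomposition of the estimate $\sum_j |a_j|/|x-x_j|^s > P$ into annular contributions $\sum_k 2^{-ks}|\nu|(A_k(x))$ where $A_k(x) = \{y: 2^k \le |x-y| < 2^{k+1}\}$; for $s > d$, the geometric series $\sum_k 2^{k(d-s)}$ converges with total mass $\lesssim 1/(s-d)$, which prevents the number of active scales from entering the bound. I would then extract from $\{B(x,r(x))\}_{x\in \XX}$ a disjoint subfamily $\{B(x_k,r_k)\}_{k=1}^M$ with $\{B(x_k,5r_k)\}$ covering $\XX$. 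Disjointness combined with the mass condition forces $M \le N$, since each selected ball contains at least one atom of $\nu$.

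Given such a covering, I would sum the contributions as follows. Because $h(t)/t^d$ is non-increasing, the function $H(u) := h(u^{1/d})$ is concave, and Jensen's inequality gives
\[
\sum_{k} h(r_k) \;\le\; M\, h\!\left(\Bigl(\tfrac{1}{M}\sum_k r_k^d\Bigr)^{1/d}\right).
\]
Bounding $r_k^d = (r_k^s)^{d/s} \le (|\nu|(B_k)/(cP))^{d/s}$, invoking the concavity of $t\mapsto t^{d/s}$ (valid since $s \ge d$), and using disjointness $\sum_k |\nu|(B_k) \le \|\nu\|$, one gets $\sum_k r_k^d \le (cP)^{-d/s} M^{1-d/s}\|\nu\|^{d/s}$, hence
\[
\sum_k h(r_k) \;\le\; M\, h\!\left(\Bigl(\tfrac{\|\nu\|}{cPM}\Bigr)^{1/s}\right).
\]
A short direct computation using $s\ge d$ and the non-increasing property of $h(t)/t^d$ shows that $M \mapsto Mh((\|\nu\|/(cPM))^{1/s})$ is non-decreasing in $M$, so I may replace $M$ by $N$ and absorb the constant $c^{-d/s}$ into the final factor, together with the $5^d$ coming from the Vitali enlargement, to conclude $M_h(\XX) \le C N h((\|\nu\|/(PN))^{1/s})$.

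The hard part is producing the ball in Step 2 with a dimension-free constant $c$. Naive pigeonhole applied to $\sum_j |a_j|/|x-x_j|^s > P$ yields only $|\nu|(B(x,r))/r^s > P/N$ at best, which would cost a factor $N^{d/s}$ and miss the target bound. The dyadic scale argument above is precisely designed to fix this for $s > d$ through the summability of $\sum_k 2^{k(d-s)}$. The boundary case $s = d$ amounts to an $M_h$-version of the classical weak-$(1,1)$ inequality for the $d$-dimensional Riesz transform and may require a separate Calder\'on--Zygmund-type decomposition of $\nu$ into good and bad parts.
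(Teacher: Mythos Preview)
Your overall strategy—pass to $\XX(|\nu|,P)$, run a Vitali argument, then use Jensen/concavity to bound $\sum_k h(r_k)$ by $Mh\bigl((\|\nu\|/(cPM))^{1/s}\bigr)$ and finally replace $M$ by $N$ via monotonicity—is clean and would work once the scale-selection step is in place. But that step has a genuine gap.

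You assert that for each $x$ with $U^s_{|\nu|}(x)>P$ there is $r(x)$ with $|\nu|(B(x,r(x)))\ge c(s,d)\,P\,r(x)^s$, invoking the convergence of $\sum_k 2^{k(d-s)}$. No such $c$ independent of $N$ exists, and $d$ does not enter the dyadic decomposition of $U^s_{|\nu|}(x)$ for a point-mass measure. Concretely: fix $x$ and for $j=0,\dots,N-1$ put an atom at distance $2^j$ from $x$ with mass $|a_0|=cP$, $|a_j|=cP(1-2^{-s})2^{js}$ for $j\ge1$. Then $|\nu|(\overline B(x,2^k))=cP\cdot 2^{ks}$ for every $k$, so $\sup_r |\nu|(B(x,r))/r^s\le cP$, while $U^s_{|\nu|}(x)=cP\bigl(1+(N-1)(1-2^{-s})\bigr)$. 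Taking $c\sim 1/N$ gives $U^s_{|\nu|}(x)>P$ but density ratio only $\sim P/N$. Feeding $c\sim 1/N$ into your Jensen step yields $Nh\bigl((\|\nu\|/P)^{1/s}\bigr)$ rather than the target $Nh\bigl((\|\nu\|/(PN))^{1/s}\bigr)$. The series $\sum_k 2^{k(d-s)}$ would be relevant if $|\nu|(A_k)\lesssim 2^{kd}$, but nothing like that holds for point masses; this is where $s>d$ fails to help.

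The paper's route is different. For $s>d$ it cites Theorem~1.1 of \cite{E2}, which directly gives $M_h(\XX(|\nu|,P))\le Ch(r_0)$ for a scale $r_0$ defined through an implicit equation already encoding the $N$-dependence; one then just solves that equation for $K(t)=t^{-s}$ using $h(t)/t^d$ non-increasing. For $s=d$ the paper does what you guessed: it uses the classical weak-$(1,1)$ inequality for the Riesz transform with respect to Lebesgue measure, combined with the construction of Lemma~\ref{le62} to thicken $\ZZ^\ast(\nu,P)$ into a set $G$ with $B(x,0.4t_1)\subset G$ for a \emph{single fixed} radius $t_1=(\|\nu\|/(NP))^{1/d}$, so that Lebesgue measure of $G$ can be converted back to $h$-content by covering with balls of that one radius. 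The point is that the covering is done at one globally chosen scale, not a point-dependent one; this is what you need to replace your scale-selection step.
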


Thus, for $s>d$ cancelation plays no role and the sharp estimate for $M_h(\ZZ^\ast(\nu,P))$ is the same (up to
a constant factor depending on $s$ and $d$) as the estimate for the $h$-Hausdorff content of the set $\XX(|\nu|,P)$
obtained in \cite{E2}, which in this case reduces to \eqref{f48}. Note that the right hand side of this estimate always
(when $s>d$) blows up as $N\to\infty$ (since $h(t)\ge c\,t^d,\ t\rightarrow 0+$).
So, no meaningful generalization to the case of arbitrary Borel measures $\nu$ is possible here.

The sharpness of the estimate \eqref{f48} for $s\ge d$ is obvious: just consider $N$ equal point masses located far away
from each other.

\medskip

Proofs of Theorems \ref{th41}, \ref{th43}  are based on the weak type 1 -- 1 estimate
for the maximal Calder\'on-Zygmund operator obtained by Nazarov, Treil and Volberg in \cite{NTV}, p.~483.
We quote this general result for the case of the maximal Riesz transform $R_{\nu,\ast}^s(x)$.

Let $\Sigma_s$ be the class of nonnegative Borel measures $\mu$ in $\R^d$ such that

\begin{equation}\label{f49}
\mu(B(x,r))\le r^s \quad\text{for all}\ x\in\R^d\text{ and }r>0.
\end{equation}

\begin{theorem}\label{th45} \cite{NTV} Suppose that $\mu\in\Sigma_s$ and $\pmb|\mathfrak{R}_{\mu}^s\pmb|<\infty$.
Then for every complex-valued Radon measure $\nu$ one has

\begin{equation}\label{f410}
\mu\{x\in\R^d:R_{\nu,\ast}^s(x)>t\}<\frac{C\|\nu\|}t
\end{equation}
with $C$ depending only on $s$ and $\pmb|\mathfrak{R}_{\mu}^s\pmb|$.
\end{theorem}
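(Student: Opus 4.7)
The plan is to use the non-homogeneous Calder\'on--Zygmund scheme underlying the NTV proof. After reducing to a real nonnegative, compactly supported $\nu$ (splitting into positive and negative parts of the real and imaginary parts, losing only a factor of $4$, followed by vague approximation), fix $t>0$ and produce a Calder\'on--Zygmund decomposition of $\nu$ adapted to $\mu$: an at most countable family of balls $\{B_j\}$ whose concentric doublings $\{2B_j\}$ have bounded overlap depending only on $d$, with $\nu(B_j)\approx t\mu(2B_j)$ and density $d\nu/d\mu\le Ct$ almost everywhere on $\R^d\setminus\bigcup_j B_j$. Unlike the classical case, one must select balls by a stopping-time argument on the ratio $\nu(B)/\mu(2B)$ and use a Besicovitch-type covering lemma to ensure bounded overlap of the \emph{doublings}; this is the main technical point.

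Write $\nu = g\,d\mu + \sum_j b_j$ with $\|g\|_{L^\infty(\mu)}\le Ct$, $\|g\|_{L^1(\mu)}\le\|\nu\|$, each $b_j$ supported in $2B_j$, of zero mean in a suitably weighted sense, and with $\|b_j\|\lesssim\nu(B_j)$. The hypothesis $\pmb|\mathfrak{R}_{\mu}^s\pmb|<\infty$ gives uniform $L^2(\mu)$-bounds for every $\mathfrak{R}_{\mu,\e}^s$, and a Cotlar-type inequality tailored to the $s$-growth of $\mu$ controls the maximal truncation by a fixed-scale truncation plus a maximal function. Combined with $\|g\|_{L^2(\mu)}^2\le\|g\|_{L^\infty(\mu)}\|g\|_{L^1(\mu)}\lesssim t\|\nu\|$, Chebyshev's inequality yields
$$
\mu\{x:R_{g\mu,*}^s(x)>t/2\}\le\frac{C\|\nu\|}{t}.
$$

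For the bad part, first discard $E=\bigcup_j 2B_j$, whose $\mu$-measure is bounded by $\sum_j\mu(2B_j)\lesssim t^{-1}\sum_j\nu(B_j)\lesssim\|\nu\|/t$ via bounded overlap. Outside $E$, the cancellation of $b_j$ together with the smoothness estimate $|K^s(x-y)-K^s(x-c_j)|\lesssim\mathrm{diam}(B_j)/|x-c_j|^{s+1}$ yields the pointwise bound
$$
R_{b_j,*}^s(x)\lesssim\int\frac{\mathrm{diam}(B_j)}{|x-c_j|^{s+1}}\,d|b_j|(y),\qquad x\notin 2B_j,
$$
where $c_j$ is the center of $B_j$. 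Integrating over $\R^d\setminus 2B_j$ against $\mu$ and using $\mu(B(c_j,r))\le r^s$ gives $\int R_{b_j,*}^s\,d\mu\lesssim\nu(B_j)$; summing in $j$ and one more application of Chebyshev bounds the corresponding level set by $C\|\nu\|/t$, and the two halves combine to prove the theorem.

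The hardest step, as in every non-homogeneous harmonic analysis argument, is the Calder\'on--Zygmund decomposition itself: without a doubling assumption one cannot iterate a Whitney stopping time in the usual way, and one must arrange the \emph{doublings} of the selected balls to have bounded overlap, not the balls themselves. The Cotlar inequality for the maximal Riesz transform is a secondary difficulty---its non-doubling proof must exploit the $s$-growth of $\mu$ rather than doubling of $\mu$ to compare two truncations at comparable scales.
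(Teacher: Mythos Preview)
The paper does not prove Theorem~\ref{th45}; it is quoted verbatim from \cite{NTV} (p.~483) and used as a black box in Sections~7 and~9. So there is no ``paper's proof'' to compare against, and your task here is really to reproduce the NTV argument. Your outline captures its architecture correctly: non-homogeneous Calder\'on--Zygmund decomposition of $\nu$ relative to $\mu$, $L^2$-control of the good part via the hypothesis $\pmb|\mathfrak{R}_{\mu}^s\pmb|<\infty$ together with a Cotlar inequality, and kernel smoothness plus cancellation for the bad part.

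There is, however, a genuine gap in your treatment of the bad part. You claim that for $x\notin 2B_j$,
\[
R_{b_j,*}^s(x)\ \lesssim\ \frac{\mathrm{diam}(B_j)}{|x-c_j|^{s+1}}\,\|b_j\|,
\]
but this is false for the \emph{maximal} transform. When the truncation radius $\varepsilon$ satisfies $|x-c_j|-2r_j\le\varepsilon\le|x-c_j|+2r_j$, the sphere $\partial B(x,\varepsilon)$ cuts through $\operatorname{supp}b_j$ and the mean-zero cancellation is lost; the best one gets is $|R_{b_j,\varepsilon}^s(x)|\lesssim\|b_j\|/|x-c_j|^{s}$. (Take $b_j=\delta_{c_j+r_je_1}-\delta_{c_j-r_je_1}$, $x=c_j+Re_1$, $\varepsilon=R$: then $|R_{b_j,\varepsilon}^s(x)|\sim R^{-s}$, not $r_jR^{-s-1}$.) Integrating $|x-c_j|^{-s}$ against a measure $\mu\in\Sigma_s$ over $\{|x-c_j|>2r_j\}$ diverges, so your $L^1(\mu)$-bound $\int R_{b_j,*}^s\,d\mu\lesssim\nu(B_j)$ does not follow. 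In \cite{NTV} this is handled not by a pointwise bound on each $R_{b_j,*}^s$ but by a more delicate argument that, for fixed $x$ and $\varepsilon$, separates those $j$ for which $2B_j$ straddles $\partial B(x,\varepsilon)$ and controls their total contribution collectively; alternatively one first proves the weak type for each fixed $\varepsilon$ and then invokes the Cotlar inequality once more to pass to the supremum. Your sketch needs one of these mechanisms.

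A secondary loose end: writing ``density $d\nu/d\mu\le Ct$'' presumes $\nu\ll\mu$ off the selected balls, which is not automatic for an arbitrary Radon measure. Either your ``vague approximation'' must explicitly reduce to $\nu=f\,d\mu$ with $f\in L^1(\mu)$ (and then you must check that the level set for the maximal transform behaves well under such approximation), or the good part must be defined as a measure satisfying $\nu_{\rm good}(B)\le Ct\,\mu(2B)$ for all balls, with the $L^2$-step rephrased accordingly.
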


To apply Theorem \ref{th45} we should be able to construct the auxiliary measure $\mu$ and to estimate
$\pmb|\mathfrak{R}_{\mu}^s\pmb|$. This
estimate has various applications (see for example Section~11 below) and as we believe is of independent interest.

\begin{theorem}\label{th46}
For every nonnegative Borel measure $\mu$ and $0<s<d$ we have

\begin{equation}\label{f411}
\pmb|\mathfrak{R}_{\mu}^s\pmb|^2\le C\sup_{x\in\supp \mu}\int_0^\infty\bigg[\frac{\mu(B(x,r))}{r^s}\bigg]^2\,\frac{dr}{r}
\end{equation}
with $C$ depending only on $s$ and $d$.
\end{theorem}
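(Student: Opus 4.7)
The plan is a multi-scale $TT^*$-type argument. First, I would observe that the hypothesis already forces polynomial growth of order $s$: restricting the integral that defines $A(\mu)(x):=\int_0^\infty[\mu(B(x,r))/r^s]^2\,dr/r$ to a single dyadic band $[r,2r]$ gives $\mu(B(x,r))\le C\bigl(\sup_{\supp\mu}A(\mu)\bigr)^{1/2}r^s$ for every $x\in\supp\mu$, and hence for all $x\in\R^d$ after passing to a nearest point of $\supp\mu$. Thus $c\mu\in\Sigma_s$, each $\mathfrak{R}^s_{\mu,\varepsilon}$ is bounded on $L^2(\mu)$, and the real task is the \emph{quantitative} bound on the norm.

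Second, I would set up a smooth Littlewood--Paley split of $\mathfrak{R}^s_\mu$ into annular pieces. Pick a smooth radial cutoff $\varphi$ with $\chi_{B(0,1)}\le\varphi\le\chi_{B(0,2)}$, form the smoothly truncated kernel $K_t^s(z):=K^s(z)\bigl(1-\varphi(z/t)\bigr)$, and let $\Psi_t:=-t\,\partial_t\mathfrak{R}_{\mu,t}^{s,\mathrm{smooth}}$, whose kernel is supported in $\{|y-x|\sim t\}$ and bounded pointwise by $Ct^{-s}$. Up to a controllable smooth-versus-sharp-truncation error,
$$
\mathfrak{R}^s_{\mu,\varepsilon}f=\int_\varepsilon^\infty \Psi_t f\,\frac{dt}{t}.
$$
An elementary Schur estimate using the polynomial growth of $\mu$ then gives
$$
\|\Psi_t\|_{L^2(\mu)\to L^2(\mu)}\le C\sup_{x\in\supp\mu}\frac{\mu(B(x,2t))}{t^s}.
$$

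Third, the key analytic input is a vertical square-function inequality
$$
\|\mathfrak{R}^s_\mu f\|_{L^2(\mu)}^2\le C\int_0^\infty \|\Psi_t f\|_{L^2(\mu)}^2\,\frac{dt}{t}.
$$
Combining this with the scale-by-scale operator-norm bound and pulling the supremum in $x$ out of the integral in $t$ produces exactly the claimed estimate, with the right-hand side equal to $C\sup_{x\in\supp\mu}A(\mu)(x)\,\|f\|_{L^2(\mu)}^2$.

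The main obstacle is establishing the square-function inequality in the absence of any doubling property of $\mu$, where the classical Littlewood--Paley machinery is unavailable. The most plausible route is Cotlar--Stein: compute the off-diagonal operator $\Psi_t^*\Psi_s$ for $t\ll s$, exploit the cancellation of the antisymmetric Riesz kernel at scales much larger than its natural scale to produce an off-diagonal decay factor $(t/s)^\alpha$ with some $\alpha>0$, and sum via the standard almost-orthogonality trick. An alternative, in the spirit of Nazarov--Treil--Volberg, is to work in an adapted random dyadic Haar basis $\{h_Q\}$, replacing the continuous-scale identity by a Bessel inequality on discrete scales $t=2^k$; the hardest step is then the off-diagonal control of the matrix entries $\langle\mathfrak{R}^s_\mu h_Q,h_{Q'}\rangle_\mu$ uniformly in the non-doubling measure $\mu$.
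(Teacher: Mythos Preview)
Your proposal has two gaps, one repairable and one genuine.

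The repairable one is the step ``pulling the supremum in $x$ out of the integral in $t$.'' As written, you combine the scale-by-scale operator norm bound $\|\Psi_t\|\le C\sup_x\mu(B(x,2t))/t^s$ with the square function to obtain $\int_0^\infty\bigl(\sup_x\mu(B(x,2t))/t^s\bigr)^2\,dt/t$, and then claim this is bounded by $\sup_x\int_0^\infty\bigl(\mu(B(x,2t))/t^s\bigr)^2\,dt/t$. That inequality goes the wrong way: $\int\sup\ge\sup\int$, and simple examples (place disjoint pieces of $\mu$ so that the supremum at each scale is achieved at a different point) show the two sides can differ by an arbitrary factor. This is fixable by a pointwise Schur estimate $\|\Psi_tf\|_{L^2(\mu)}^2\le C\int|f(y)|^2\bigl(\mu(B(y,Ct))/t^s\bigr)^2\,d\mu(y)$, which after integrating in $t$ allows one to take the supremum correctly.

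The genuine gap is the square-function inequality $\|\mathfrak{R}^s_\mu f\|^2\le C\int_0^\infty\|\Psi_tf\|^2\,dt/t$ itself. Your Cotlar--Stein route requires off-diagonal decay of $\Psi_s^*\Psi_t$ for $t\ll s$. Expanding the kernel of $\Psi_s^*\Psi_t$ and using the smoothness of $\psi_s$ at scale $s$ produces an error term with decay $(t/s)$, but leaves a main term of the form $\psi_s(y,x)\int\psi_t(z,y)\,d\mu(z)$. That inner integral is essentially $\Psi_t1(y)$; the antisymmetry of the Riesz kernel does \emph{not} make it vanish, because $\mu$ has no symmetry. Bounding it only by $t^{-s}\mu(B(y,Ct))$ gives no decay at all in $t/s$, so Cotlar--Stein fails unless you already know something like $\|\Psi_t1\|_\infty\le C$ --- i.e., the $T1$ condition, which is what you are trying to prove. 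Your alternative via random dyadic Haar bases is, as you say, the proof of the non-homogeneous $T1$ theorem; it too requires verifying a testing condition on characteristic functions. Either way you are back to needing the estimate $\|\mathfrak{R}^s_{\mu,\varepsilon}\chi_Q\|_{L^2(\mu|Q)}^2\le C\mathbf S\,\mu(Q)$, and you give no mechanism for that.

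The paper's proof attacks exactly this testing condition directly. The key new input is a pointwise symmetrization lemma: for any three points with $|z-x|\le|z-y|\le|y-x|$ one has
\[
K^s(z-x)\cdot K^s(z-y)+K^s(x-y)\cdot K^s(x-z)\le\frac{2^{s+1}}{|y-x|^{s+1}|z-x|^{s-1}}.
\]
This is the higher-dimensional replacement for Melnikov's Menger-curvature identity (which, by Farag's result, has no positive analogue for $s$-Riesz kernels when $d>2$, but the one-sided bound above survives). One expands $\|\mathfrak{R}^s_{\mu,\varepsilon}\chi_Q\|_{L^2(\mu|Q)}^2$ as a triple integral, symmetrizes in the two variables giving the smallest mutual distances, and the lemma converts the integrand into a positive kernel whose integral against $\mu^{\otimes3}$ is bounded by $C\mathbf S\,\mu(Q)$ via elementary one-variable integration by parts. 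Then the non-homogeneous $T1$ theorem (or a weak-type substitute) closes the argument. Nothing in your outline produces this cancellation.
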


As a byproduct of our calculations we obtain the following estimate for Calder\'on-Zygmund capacity $\g_{s,+}(E)$
(the corresponding definitions are given in Section~11).

\begin{theorem}\label{th47}
For any compact set $E\subset\R^d$,

\begin{equation}\label{f411'}
\g_{s,+}(E)\ge c\,\sup\|\mu\|^{3/2}\bigg[\int_{\R^d}W^\mu(x)\,d\mu(x)\bigg]^{-1/2},\quad
W^\mu(x):=\int_0^\infty\bigg[\frac{\mu(B(x,r))}{r^s}\bigg]^2\,\frac{dr}{r},
\end{equation}
where the supremum is taken over all positive Radon measures supported by $E$, and $c$ depends only on $d$, $s$.
\end{theorem}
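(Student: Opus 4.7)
The plan is to start with any positive Radon measure $\mu$ supported on $E$ and, after two normalizations, extract a positive sub-measure $\sigma$ admissible for $\gamma_{s,+}(E)$ whose total mass is at least $c\|\mu\|^{3/2}\bigl[\int W^\mu\,d\mu\bigr]^{-1/2}$. Taking the supremum over $\mu$ then gives the claim. The two key ingredients are Theorem~\ref{th46}, which converts a pointwise bound on $W^\mu$ into an $L^2(\mu)$ bound for the Riesz transform, and Theorem~\ref{th45}, which then yields a weak-type~$1$ estimate from which $\sigma$ can be produced.

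For the first normalization, set $A:=\|\mu\|^{-1}\int W^\mu\,d\mu$. By Chebyshev the set $F:=\{x:W^\mu(x)\le 2A\}$ has $\mu(F)\ge\|\mu\|/2$; put $\mu_1:=\mu|_F$, so $W^{\mu_1}(x)\le 2A$ for $x\in\supp\mu_1$. Restricting the integral defining $W^{\mu_1}(x)$ to $[r,2r]$ and using monotonicity of $t\mapsto\mu_1(B(x,t))$ gives $\mu_1(B(x,r))\le C(s)\sqrt{A}\,r^s$ for all $x\in\supp\mu_1$ and $r>0$. This is the point at which the scale $\sqrt A$ enters. Rescaling, define $\mu_2:=[C(s)\sqrt{A}]^{-1}\mu_1$; then $\mu_2(B(x,r))\le r^s$ on $\supp\mu_2$, and a standard comparison with a ball centered on the support extends this to $\mu_2\in\Sigma_s$ (up to a dimensional factor). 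Moreover $W^{\mu_2}(x)=[C(s)\sqrt A]^{-2}W^{\mu_1}(x)\le C$ on $\supp\mu_2$, so Theorem~\ref{th46} gives $\pmb|\mathfrak{R}^s_{\mu_2}\pmb|\le C$. By Theorem~\ref{th45} we then have the weak-type bound
$$
\mu_2\{x\in\R^d:R^s_{\nu,*}(x)>t\}\le\frac{C\|\nu\|}{t}
$$
for every complex Borel measure $\nu$, with $C$ an absolute constant.

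To extract $\sigma$, apply the weak-type estimate with $\nu=\mu_2$ and $t=C_0$ large enough to obtain $\mu_2\{R^s_{\mu_2,*}>C_0\}\le\|\mu_2\|/2$. A stopping-time/iterative trimming (successively restricting to the region where the maximal Riesz transform is bounded and reapplying the weak $(1,1)$ bound to the removed piece) yields a positive $\sigma\le\mu_2$ with $\|\sigma\|\ge c\|\mu_2\|$ and $R^s_{\sigma,*}(x)\le C_1$ at every relevant point. Rescaling $\sigma$ by $1/C_1$ produces a measure admissible for $\gamma_{s,+}(E)$ of total mass
$$
\gtrsim\|\mu_2\|=\frac{\|\mu_1\|}{C(s)\sqrt{A}}\gtrsim\frac{\|\mu\|}{\sqrt{A}}=\frac{\|\mu\|^{3/2}}{\bigl[\int W^\mu\,d\mu\bigr]^{1/2}},
$$
which is the desired bound.

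The main obstacle is the final extraction. Because $R^s_{\sigma,*}(x)$ is the supremum of the norms of vector-valued truncated integrals, one cannot control it on $\supp\sigma$ by simply restricting $\mu_2$ to a good set: the decomposition $R^s_{\sigma,*}\le R^s_{\mu_2,*}+R^s_{\mu_2-\sigma,*}$ couples $\sigma$ to the discarded piece, and a single truncation loses too much mass in iteration. The remedy is the stopping-time scheme above, arranged so that the mass removed at the $n$-th step decays geometrically, yielding only a constant-factor loss; this is the only place where any nontrivial work beyond Theorems \ref{th45} and \ref{th46} is needed.
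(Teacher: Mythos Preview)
Your argument coincides with the paper's through the point where you obtain a measure $\mu_2\in\Sigma_s$, supported on $E$, with $\|\mu_2\|\ge c\|\mu\|/\sqrt{A}$ and $\pmb|\mathfrak{R}_{\mu_2}^s\pmb|\le C$ (the paper calls this measure $\eta$). The divergence is in the last step. The paper does \emph{not} use Theorem~\ref{th45} here at all; it simply invokes the equivalence
\[
\gamma_{s,+}(E)\asymp\gamma_{op}(E):=\sup\{\|\mu\|:\mu\in\Sigma_s,\ \supp\mu\subset E,\ \pmb|\mathfrak{R}_\mu^s\pmb|\le1\}
\]
from \cite{V}, Chapter~5, which immediately gives $\gamma_{s,+}(E)\ge c\|\mu_2\|$ and finishes the proof.

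Your alternative---extracting $\sigma\le\mu_2$ with $R^s_{\sigma,*}\le C_1$ everywhere by iterated trimming driven by the weak-type bound---is precisely the hard direction $\gamma_{op}\lesssim\gamma_{s,+}$ of that equivalence, and the iteration you sketch does not close. Write $\sigma=\mu_2-\sum_k\rho_k$, where $\rho_k$ is the piece removed at step $k$. With a fixed threshold $\lambda$ one does get $\|\rho_k\|\le (C/\lambda)^k\|\mu_2\|$, so the removed mass decays geometrically as you claim; but on $\supp\sigma$ you only know $R^s_{\rho_k,*}(x)\le\lambda$ for each $k$, and
\[
|R^s_{\sigma,\e}(x)|=\Bigl|R^s_{\mu_2,\e}(x)-\sum_k R^s_{\rho_k,\e}(x)\Bigr|\le\lambda+\sum_k\lambda,
\]
which diverges. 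Replacing $\lambda$ by summable thresholds $\lambda_k\downarrow 0$ destroys the geometric decay of $\|\rho_k\|$ (the recursion $\|\rho_{k+1}\|\le C\|\rho_k\|/\lambda_{k+1}$ then blows up). Small total mass of $\rho_k$ gives no pointwise control on $R^s_{\rho_k,*}$, so the coupling you correctly identify as the obstacle is not resolved by the scheme you describe. The passage from $L^2$-boundedness of $\mathfrak{R}^s_\mu$ to a measure with $\|R^s_\sigma\|_\infty\le 1$ and comparable mass is a genuine theorem (proved in \cite{V} via non-homogeneous $Tb$/duality arguments, not an elementary stopping time); you should either cite it, as the paper does, or supply that argument in full.
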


We give two applications of Theorem \ref{th47}. We use it to derive relations between Hausdorff content
$M_h(E)$ and capacity $\g_{s,+}(E)$. Interestingly enough, there exists another completely different capacity
$C_{\frac23(d-s),\frac32}(E)$ (see \cite{AH}, \cite{MH}, \cite{HW} and the multiple references therein), that can be
characterized via the same potential $W^\mu(x)$. Theorem \ref{th47} immediately implies that
$\g_{s,+}(E)\ge c\cdot C_{\frac23(d-s),\frac32}(E)$ (see Section~11).

\vspace{.1in}

We prove Theorem \ref{th46} in Section 5. The construction of the appropriate measure $\mu$ is given in Section 6.
The first part of Theorem \ref{th41} is proved in Section~7 and the second part in Section~8. Section~9 contains the
proof of Theorem \ref{th43}. In Section~10 we consider the case $s\ge d$ and prove Theorem \ref{th44}.
In Section~11 we investigate metric properties of various capacities generated by vector-valued Riesz
potentials. In particular, we obtain the Frostman type theorem on comparison of these capacities and Hausdorff content.

\section{Proof of Theorem \ref{th46}}

The main trick in \cite{M} (which led to the use of Menger's curvature in non-homogeneous harmonic analysis)
is to symmetrize an expression involving Cauchy kernel by using averaging over all permutations of coordinates.
Amazingly this averaging a) is non-negative, b) is ``considerably smaller" than the absolute value of the
original expression, and c) is equal to a certain curvature. This observation is no longer true when one averages
a similar expression involving  vector Riesz kernels in $\R^d,\ d>2$, see the paper of Hany Farag \cite{F}.
This is why  we said (repeating the expression of Guy David) that the tool of curvature is ``cruelly missing" for $d>2$.

However, the following simple observation still  holds for all dimensions. If we symmetrize the pertinent expression
involving Riesz kernels we  generally miss a) and c) above, but we still have b): the symmetrized expression has
``considerably smaller" absolute value than the original one. This should be understood maybe not pointwise,
but in average.

This observation saves our day, proves Theorem \ref{th46}, and in general allows us to obtain very sharp estimates of various Calder\'on-Zygmund capasities $\gamma_{s, +}$ from below.

\begin{lemma}\label{le51}
Let $x,y,z$ be three distinct points in $\R^d$, $d\ge 1$, and let $|z-x|\le|z-y|\le|y-x|$. Then for $s>0$,

\begin{equation}\label{f51}
\aligned
q_s(x,y,z)&:=\frac{x-z}{|x-z|^{s+1}}\cdot\frac{y-z}{|y-z|^{s+1}}
+\frac{y-x}{|y-x|^{s+1}}\cdot\frac{z-x}{|z-x|^{s+1}}\\
&\le\frac{2^{s+1}}{|y-x|^{s+1}}\cdot\frac{1}{|z-x|^{s-1}}\,.
\endaligned
\end{equation}
\end{lemma}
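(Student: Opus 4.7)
The plan is to reduce everything to the three side lengths of the triangle $xyz$ and exploit the cancellation that the two symmetric terms provide when added. Set $a=|y-x|$, $b=|y-z|$, $c=|x-z|$, so the hypothesis becomes $c\le b\le a$, and let $\alpha,\beta,\gamma$ denote the angles at $x,y,z$ respectively. The first step is to recognize the two summands as genuine angle cosines:
$$
(x-z)\cdot(y-z)=bc\cos\gamma=\tfrac12(b^2+c^2-a^2),\qquad (y-x)\cdot(z-x)=ac\cos\alpha=\tfrac12(a^2+c^2-b^2),
$$
so that after dividing by the appropriate powers we obtain the identity
$$
q_s(x,y,z)=\frac{1}{2c^{s+1}}\left[\frac{b^2+c^2-a^2}{b^{s+1}}+\frac{a^2+c^2-b^2}{a^{s+1}}\right].
$$

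The crux of the argument — and what encodes the cancellation alluded to at the start of Section~5 — is to regroup the bracket by collecting the $(a^2-b^2)$ pieces together and the $c^2$ pieces together:
$$
q_s(x,y,z)=\frac{1}{2c^{s+1}}\left[-(a^2-b^2)\left(\frac{1}{b^{s+1}}-\frac{1}{a^{s+1}}\right)+c^2\left(\frac{1}{a^{s+1}}+\frac{1}{b^{s+1}}\right)\right].
$$
Since $a\ge b$, both factors $a^2-b^2$ and $b^{-s-1}-a^{-s-1}$ are non-negative, so the first term in the bracket is $\le 0$ and can simply be discarded. This yields
$$
q_s(x,y,z)\le\frac{1}{2c^{s-1}}\left(\frac{1}{a^{s+1}}+\frac{1}{b^{s+1}}\right).
$$

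The final step is to use the ordering of the sides to replace $b$ by $a$ in the second fraction. From the triangle inequality and the hypothesis $c\le b$, $a\le b+c\le 2b$, whence $b^{-s-1}\le 2^{s+1}a^{-s-1}$. Substituting gives
$$
q_s(x,y,z)\le\frac{1+2^{s+1}}{2\,a^{s+1}c^{s-1}}\le\frac{2^{s+1}}{a^{s+1}c^{s-1}},
$$
the last inequality being just $1\le 2^{s+1}$, valid for $s>0$. This is exactly \eqref{f51}.

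The routine algebra is entirely straightforward; the only real content is the regrouping that reveals the negative ``cancellation term'' $-(a^2-b^2)(b^{-s-1}-a^{-s-1})$. Everything else (law of cosines, dropping a negative term, the elementary estimate $b\ge a/2$) is mechanical. I therefore expect no serious obstacle beyond spotting this regrouping, which is precisely the incarnation of observation (b) from the section's preamble: the symmetrized sum has significantly smaller absolute value than a naive pointwise bound on either summand would suggest.
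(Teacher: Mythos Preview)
Your proof is correct and follows essentially the same approach as the paper's own argument: express $q_s$ via the law of cosines in terms of the side lengths, regroup the bracket so that a manifestly non-positive term $-(a^2-b^2)(b^{-s-1}-a^{-s-1})$ appears and can be dropped, and finish with $b\ge a/2$. The paper carries out the identical computation after substituting $u=b/a$, $v=c/a$; its discarded term $-(1-u^2)(1-u^{s+1})$ is exactly your discarded term in those variables, so the two arguments are the same up to notation.
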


\begin{proof}
Let $a=|y-x|$, $b=|z-y|$, $c=|z-x|$, and let $\a,\beta,\g$ be the angles opposite to sides $a,b,c$ respectively. Then
$$
q_s(x,y,z)=(abc)^{-s}(a^s\cos\a+b^s\cos\beta).
$$
Since
$$
\cos\alpha=\frac{b^2+c^2-a^2}{2bc},\quad\cos\beta=\frac{a^2+c^2-b^2}{2ac},
$$
we have with $u=b/a,\ v=c/a$
$$\aligned
q_s(x,y,z)&=2^{-1}(abc)^{-s-1}\left[a^{s+1}(b^2+c^2-a^2)+b^{s+1}(a^2+c^2-b^2)\right]\\
&=2^{-1}(uv)^{-s-1}a^{-2s}\left[u^2+v^2-1+u^{s+1}(1+v^2-u^2)\right]\\
&=2^{-1}(uv)^{-s-1}a^{-2s}\left[u^{s+1}(1-u^2)+v^2(1+u^{s+1})-(1-u^2)\right]\\
&=2^{-1}(uv)^{-s-1}a^{-2s}\left[v^2(1+u^{s+1})-(1-u^2)(1-u^{s+1})\right]\\
&\le u^{-s-1}v^{-s+1}a^{-2s}\le2^{s+1}a^{-s-1}c^{-s+1},
\endaligned$$
because $1/2\le u\le1$.
\end{proof}

\begin{proof}[Proof of Theorem \ref{th46}]
Without loss of generality we assume that

\begin{equation}\label{f52}
\mathbf S:=\sup_{x\in\supp\mu}\int_0^\infty\bigg[\frac{\mu(B(x,r))}{r^s}\bigg]^2\,\frac{dr}{r}<\infty.
\end{equation}
Otherwise (\ref{f411}) becomes trivial. We consider the measure
$$
\eta:=(2s\mathbf S)^{-1/2}\mu.
$$
Since for every $x\in\supp\mu$ and $r>0$,
$$
\mathbf S\ge\int_0^\infty\bigg[\frac{\mu(B(x,t))}{t^{s}}\bigg]^2\,\frac{dt}{t}\ge
[\mu(B(x,r))]^2\int_r^\infty\frac{dt}{t^{2s+1}}=\frac{[\mu(B(x,t))]^2}{2sr^{2s}},
$$
we see that $\eta\in\Sigma_s$.

We are going to give {\it two proofs} of the theorem. They have a lot in common but they are different. The first
one will be based on the non-homogeneous $T1$ theorem.

\noindent{\bf The first approach.} Let $Q$ be any cube in $\R^d$. If for each $\e>0$ we prove the inequality

\begin{equation}\label{f53}
\|\mathfrak{R}_{\eta,\e}^s\chi_Q\|^2_{L^2(\eta|Q)}\le C\eta(Q),\quad C=C(d,s),
\end{equation}
then the theorem follows. In fact, for $\eta\in\Sigma_s$ satisfying this condition, the norm of
Calder\'on-Zygmund operator on a space of non-homogeneous type can be estimated by a
constant depending only on $C$ in (\ref{f53}) and the constants in Calder\'on-Zygmund kernel, see
\cite{NTV97}, \cite{NTV03}, \cite{V}. In the spaces of homogeneous
type, it is the famous $T1$ theorem of David-Journ\'e (see \cite{DJ} for the Euclidean
setting and \cite{C} for homogeneous setting). Notice that a measure $\eta$
does not have any doubling property in general. So we cannot use the homogeneous $T1$ theorem, and we can use neither
\cite{DJ} nor \cite{C}. But the non-homogeneous $T1$ theorem \cite{NTV97}, \cite{NTV03} works fine. Thus,
$$
\|\mathfrak{R}_{\eta,\e}^s\|^2_{L^2(\eta)\to L^2(\eta)}\le C,\quad C=C(d,s),
$$
and we have
$$
\pmb|\mathfrak{R}_{\mu}^s\pmb|^2=2s\mathbf S\,\pmb|\mathfrak{R}_{\eta}^s\pmb|^2\le C\,\mathbf S,\quad C=C(d,s),
$$
that is \eqref{f411}. So, it is enough to prove \eqref{f53} or equivalently,

\begin{equation}\label{f54}
\|\mathfrak{R}_{\mu,\e}^s\chi_Q\|^2_{L^2(\mu|Q)}\le C\mathbf S\mu(Q),\quad C=C(d,s).
\end{equation}

We fix $\e>0$ and set
$$\aligned
\UU&=\{(x,y,z)\in Q^3:|y-x|>\e,\ |z-x|>\e\},\\
\UU_1&=\{(x,y,z)\in Q^3:|y-x|\ge|z-x|>\e\},\\
\UU_2&=\{(x,y,z)\in Q^3:\e<|y-x|\le|z-x|\},\\
\UU_{1,1}&=\{(x,y,z)\in Q^3:|y-x|\ge|z-x|>\e,\ |y-z|\ge|z-x|\},\\
\UU_{1,2}&=\{(x,y,z)\in Q^3:|y-x|\ge|z-x|>\e,\ |y-z|<|z-x|\}.
\endaligned$$
Then
$$\aligned
\int_{Q}|\mathfrak{R}_{\mu,\e}^s\chi_Q(x)|^2\,d\mu(x)&=\iiint_{\UU}
\frac{y-x}{|y-x|^{s+1}}\cdot\frac{z-x}{|z-x|^{s+1}}\,d\mu(z)\,d\mu(y)\,d\mu(x)\\
&\le\iiint_{\UU_1}+\iiint_{\UU_2}=:A+B.
\endaligned$$
It is enough to estimate $A$. We have
$$\aligned
|A|\le&\biggl|\iiint_{\UU_{1,1}}\frac{y-x}{|y-x|^{s+1}}\cdot\frac{z-x}{|z-x|^{s+1}}\,d\mu(z)\,d\mu(y)\,d\mu(x)\biggr|\\
+&\biggl|\iiint_{\UU_{1,2}}\frac{y-x}{|y-x|^{s+1}}\cdot\frac{z-x}{|z-x|^{s+1}}\,d\mu(z)\,d\mu(y)\,d\mu(x)\biggr|=:A_1+A_2.
\endaligned$$
We put the absolute value in $A_2$ inside the integral. Since $|z-x|>\frac12|y-x|$ in $A_2$, we get
\begin{equation}\label{f55}\aligned
A_2&\le2^s\int_Q\int_{|y-x|>0}\frac1{|y-x|^{2s}}\,\mu(B(x,|y-x|))\,d\mu(y)\,d\mu(x)\\
&=2^s\int_Q\int_0^\infty\frac1{r^{2s}}\,\mu(B(x,r))\,d\mu(B(x,r))\,d\mu(x)\\
&=2^s\int_Q\int_0^\infty\frac1{r^{2s}}\,d\biggl[\frac{\mu(B(x,r))^2}2\biggr]\,d\mu(x).
\endaligned\end{equation}
From (\ref{f52}) one can easily deduce that
\begin{equation}\label{f56}
\lim_{r\to0}\frac{\mu(B(x,r))}{r^{s}}=0
\end{equation}
(see, for example, \cite{MPV}, p.~219). Moreover,
$$
\frac{\mu(B(x,r))}{r^{s}}\le\frac{\mu(\R^d)}{r^{s}}\to0\text{ as }r\to\infty.
$$
Integrating by parts in the last integral of (\ref{f55}) we get
$$
A_2\le s2^s\int_Q\int_0^\infty\bigg[\frac{\mu(B(x,r))}{r^s}\bigg]^2\,\frac{dr}{r}\,d\mu(x)\le s2^s\mathbf S\mu(Q).
$$

Let us estimate $A_1$. By the symmetry of $\UU_{1,1}$ with respect to $z,x$ we have
$$
A_1=\frac12\biggl|\iiint_{\UU_{1,1}}\biggl(\frac{y-x}{|y-x|^{s+1}}\cdot\frac{z-x}{|z-x|^{s+1}}
+\frac{y-z}{|y-z|^{s+1}}\cdot\frac{x-z}{|x-z|^{s+1}}\biggr)\,d\mu(z)\,d\mu(y)\,d\mu(x)\biggr|.
$$
Lemma \ref{le51} yields
$$
A_1\le2^s\iiint_{\UU_{1,1}}\frac{1}{|y-x|^{s+1}}\cdot\frac{1}{|z-x|^{s-1}}\,d\mu(z)\,d\mu(y)\,d\mu(x).
$$
Clearly,
$$\aligned
A_1&\le2^s\int_Q\int_{|y-x|\ge\e}\frac1{|y-x|^{s+1}}\biggl[\int_\e^{|y-x|}\frac{d\mu(B(x,t))}{t^{s-1}}\biggr]\,d\mu(y)\,d\mu(x)\\
&\le2^s\int_Q\int_0^\infty\frac1{r^{s+1}}\biggl[\int_0^{r}\frac{d\mu(B(x,t))}{t^{s-1}}\biggr]\,d\mu(B(x,r))\,d\mu(x).
\endaligned$$
Set
$$
H_x(r):=\int_0^{r}\frac{d\mu(B(x,t))}{t^{s-1}}.
$$
Then the last expression can be written in the form
\begin{equation}\label{f57}
2^s\int_Q\int_0^\infty\frac1{r^{2}}H_x(r)\,dH_x(r)\,d\mu(x)=2^{s-1}\int_Q\int_0^\infty\frac{dH_x^2(r)}{r^2}\,d\mu(x).
\end{equation}
Obviously,
\begin{equation}\label{f58}
H_x(r)=\frac{\mu(B(x,r))}{r^{s-1}}+(s-1)\int_0^{r}\frac{\mu(B(x,t))}{t^{s}}\,dt,
\end{equation}
and
$$
\lim_{r\to\infty}\frac{H_x(r)}{r}=0,\quad \lim_{r\to0}\frac{H_x(r)}{r}=0
$$
(the last equality follows from (\ref{f56})). Thus,
\begin{equation}\label{f59}\aligned
\int_0^\infty\frac{dH_x^2(r)}{r^2}&=2\int_0^\infty\frac{H_x^2(r)}{r^3}\,dr
\overset{(\ref{f58})}\le4\int_0^\infty\bigg[\frac{\mu(B(x,r))}{r^s}\bigg]^2\,\frac{dr}{r}\\
&+4(s-1)^2\int_0^\infty\frac1{r^3}\biggl[\int_0^{r}\frac{\mu(B(x,t))}{t^{s}}\,dt\biggr]^2dr.
\endaligned\end{equation}
The first term in the right hand side of (\ref{f59}) is what we need. Let us estimate the second term.
By the Cauchy--Bunyakovskii--Schwarz inequality,
$$
\biggl[\int_0^{r}\frac{\mu(B(x,t))}{t^{s}}\,dt\biggr]^2\le
\int_0^{r}\bigg[\frac{\mu(B(x,t))}{t^s}\bigg]^2\,dt\cdot\int_0^rdt.
$$
Hence, applying integration by parts, we obtain
$$\aligned
\int_0^\infty\frac1{r^3}\biggl[\int_0^{r}\frac{\mu(B(x,t))}{t^{s}}\,dt\biggr]^2dr
&\le\int_0^\infty\biggl[\int_0^{r}\bigg[\frac{\mu(B(x,t))}{t^s}\bigg]^2\,dt\biggr]\frac{dr}{r^2}\\
&=\biggl(-\frac1r\int_0^{r}\bigg[\frac{\mu(B(x,t))}{t^s}\bigg]^2\,dt\biggr)\bigg|_0^\infty
+\int_0^{\infty}\bigg[\frac{\mu(B(x,r))}{r^s}\bigg]^2\,\frac{dr}r.
\endaligned$$
According to (\ref{f56}), the substitution of limits gives zero. Thus, (see (\ref{f59}))
$$
\int_0^\infty\frac{dH_x^2(r)}{r^2}<C(s)\int_0^{\infty}\bigg[\frac{\mu(B(x,r))}{r^s}\bigg]^2\,\frac{dr}r.
$$
Now (\ref{f57}) yields (\ref{f54}), and Theorem \ref{th46} is proved.

\vspace{.1in}

\noindent{\bf The second approach.} Notice that there was nothing specific in using cubes $Q$ in (\ref{f54}).
Verbatim the same proof gives more, namely, that for every measurable set $E$,

\begin{equation}\label{f510}
\|\mathfrak{R}_{\mu,\e}^s\chi_E\|^2_{L^2(\mu|E)}\le C\mathbf S\mu(E),\quad C=C(d,s).
\end{equation}
But operators $\mathfrak{R}_{\mu,\e}^s$ have one drawback: they are not operators with Calder\'on-Zygmund kernels.
So, instead of $\mathfrak{R}_{\mu,\e}^s$, we will use similar operators, but with Calder\'on-Zygmund kernels.
Let us introduce them. Let $\phi$ be a $C_0^{\infty}$ ``bell-like" function, $\phi=1$ on the unit ball and $\phi(x)=0$,
$|x|\ge2$. Put $\psi= 1-\phi$, and let $\psi_{\e}(\cdot):= \psi(\frac{\cdot}{\e})$. Set
$$\aligned
\widetilde{\mathfrak{R}}_{\mu,\e}^s f(x)&=\int \psi_{\e}(|x-y|)K^s(y-x)f(y)\,d\mu(y),\quad f\in L^2(\mu),\quad \e>0,\\
\widetilde{\mathfrak{R}}_{\mu,\ast}^s f(x)&=\sup_{\e>0}|\widetilde{\mathfrak{R}}_{\mu,\e}^s f(x)|.
\endaligned$$
We denote by $\widetilde{R}_{\nu,\e}^s$ the corresponding modified $s$-Riesz transform of a finite Borel measure $\nu$
(not necessarily positive):
$$
\widetilde{R}_{\nu,\e}^s(x)=\int \psi_{\e}(|x-y|)K^s(y-x)\,d\nu(y).
$$
It is easy to see that if $\eta\in \Sigma_s$ then
\begin{equation}
\label{f511}
|\mathfrak{R}_{\eta,\e}^s f(x) - \widetilde{\mathfrak{R}}_{\eta,\e}^s f(x)|
\le \frac{C}{\e^s}\int_{B(x,2\e)} |f|\,d\eta\le C(d,s)\widetilde{M} f(x)\,,
\end{equation}
where
\begin{equation}
\label{f512}
 \widetilde{M} f(x) := \sup_{r>0} \frac{1}{\eta(B(x,3r))}\int_{B(x,r)} |f|\,d\eta\,.
 \end{equation}
Maximal operator $\widetilde{M}$ is bounded in any $L^2(\eta)$ (see \cite{NTV}, Lemma~2.1). Hence, \eqref{f511} shows that
operators $\widetilde{\mathfrak{R}}_{\eta,\e}^s$ and $\mathfrak{R}_{\eta,\e}^s$ are bounded simultaneously, and that their norms
differ at most by $C$. Thus, \eqref{f510} yields the corresponding estimate for $\widetilde{\mathfrak{R}}_{\eta,\e}^s$:
\begin{equation}\label{f513}
\|\widetilde{\mathfrak{R}}_{\eta,\e}^s\chi_E\|^2_{L^2(\eta|E)}\le C\eta(E),\quad C=C(d,s).
\end{equation}
Suppose that we would have even more, namely that
\begin{equation}\label{f514}
\|\widetilde{\mathfrak{R}}_{\eta,\e}^s\chi_E\|^2_{L^2(\eta)}\le C\eta(E),\quad C=C(d,s).
\end{equation}
Using Theorem 5.1 of \cite{NTV} and \eqref{f514}, one can prove that operators
$$
\widetilde{\mathfrak{R}}_{\eta,\e}^s: L^1(\eta) \rightarrow L^{1,\infty}(\eta)
$$
are uniformly bounded in $\e$. Then Theorem 10.1 of \cite{NTV} proves that norms of operators $\widetilde{\mathfrak{R}}_{\eta,\e}^s$
are uniformly bounded in $L^2(\eta)$ by the same number (up to a constant).
This is what we need. The second proof of our theorem would be done.

\medskip

Unfortunately, we do not have \eqref{f514}. And \eqref{f513} alone  seems (at the first glance) to be too weak to carry
through the proofs of \cite{NTV}. It  is a pity because it would be nice to  prove Theorem \ref{th46} without referring
the reader to the proof of non-homogeneous $T1$ theorem from \cite{NTV97}.

However, there is a way to use  \eqref{f513} and to avoid the non-homogeneous $T1$ theorem. To do that, let us first
notice that \eqref{f513} can be ``strengthened" as follows: for any measurable sets $E$ and $F$,
\begin{equation}\label{f515}
\|\widetilde{\mathfrak{R}}_{\eta,\e}^s\chi_E\|^2_{L^2(\eta|F)}\le C(\eta(E)+ \eta(F)),\quad C=C(d,s).
\end{equation}
Indeed, using \eqref{f513} for various sets, we have
$$\aligned
\|\widetilde{\mathfrak{R}}_{\eta,\e}^s\chi_E\|^2_{L^2(\eta|F)}&=\|\widetilde{\mathfrak{R}}_{\eta,\e}^s\chi_{E\cup F}-
\widetilde{\mathfrak{R}}_{\eta,\e}^s\chi_{F\setminus E}\|^2_{L^2(\eta|F)}\\
&\le2\|\widetilde{\mathfrak{R}}_{\eta,\e}^s\chi_{E\cup F}\|^2_{L^2(\eta|E\cup F)}+
2\|\widetilde{\mathfrak{R}}_{\eta,\e}^s\chi_{F\setminus E}\|^2_{L^2(\eta|(F\setminus E)\cup(F\cap E))}\\
&\le2C\eta(E\cup F)+2C\eta(F\setminus E)+
2\|\widetilde{\mathfrak{R}}_{\eta,\e}^s\chi_F-\widetilde{\mathfrak{R}}_{\eta,\e}^s\chi_{F\cap E}\|^2_{L^2(\eta|F\cap E)}\\
&\le2C(\eta(E\cup F)+\eta(F\setminus E))
+4\|\widetilde{\mathfrak{R}}_{\eta,\e}^s\chi_F\|^2_{L^2(\eta|F)}+
4\|\widetilde{\mathfrak{R}}_{\eta,\e}^s\chi_{F\cap E}\|^2_{L^2(\eta|F\cap E)}\\
&\le2C(\eta(E\cup F)+\eta(F\setminus E)+2\eta(F)+2\eta(F\cap E))\le C'(\eta(E)+\eta(F)).
\endaligned$$

\begin{lemma}\label{2inf}
Let $\eta$ be a finite measure on some set $X$, and let function $f$ and constants $\tau,\ K\in(0,+\infty)$ be such
that for any measurable set $F$,
$$
\int_F |f|^2 d\eta \le \tau +K \eta(F)\,.
$$
Then $ f=f_1+f_2$, where $\|f_2\|_{\infty} \le 2K^{\frac12}$, and $\|f_1\|^2_{L^2(\eta)} \le \frac43\tau$.
\end{lemma}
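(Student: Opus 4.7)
The plan is the standard truncation decomposition at the threshold $2K^{1/2}$. Define
$$
f_2(x) := \begin{cases} f(x), & |f(x)| \le 2K^{1/2},\\ 2K^{1/2}\,\mathrm{sgn}(f(x)), & |f(x)| > 2K^{1/2},\end{cases}
$$
and set $f_1 := f - f_2$. By construction $\|f_2\|_\infty \le 2K^{1/2}$, so the first bound is trivial; the content of the lemma is the $L^2$-bound on $f_1$.

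Let $F := \{x : |f(x)| > 2K^{1/2}\}$. Then $f_1$ vanishes off $F$, and on $F$ we have $|f_1| = |f| - 2K^{1/2} \le |f|$, so
$$
\|f_1\|_{L^2(\eta)}^2 \le \int_F |f|^2\,d\eta.
$$
Apply the hypothesis with this particular $F$:
$$
\int_F |f|^2\,d\eta \le \tau + K\,\eta(F).
$$
The key observation is that on $F$ the pointwise bound $|f|^2 > 4K$ holds, which rewrites as $K < \tfrac14 |f|^2$; integrating this over $F$ gives $K\,\eta(F) \le \tfrac14 \int_F |f|^2\,d\eta$. Substituting back,
$$
\int_F |f|^2\,d\eta \le \tau + \tfrac14 \int_F |f|^2\,d\eta,
$$
and absorbing the right-hand integral (which is finite since $\eta(F) \le \eta(X) < \infty$ and $|f|$ is bounded on $F$ only up to the threshold — note that finiteness of $\int_F |f|^2 d\eta$ already follows from the hypothesis with $F$) yields $\int_F |f|^2\,d\eta \le \tfrac43 \tau$, and therefore $\|f_1\|_{L^2(\eta)}^2 \le \tfrac43 \tau$.

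There is essentially no obstacle here; the only subtle point is the absorption step, which requires knowing that $\int_F |f|^2\,d\eta < \infty$ so that we may subtract it from both sides. This is immediate from the hypothesis applied to $F$ together with $\eta(F) \le \eta(X) < \infty$. If one wished to avoid this finiteness assumption, one could instead truncate $F$ to $F_n := F \cap \{|f| \le n\}$, derive the bound $\int_{F_n} |f|^2\,d\eta \le \tfrac43 \tau$ uniformly in $n$, and let $n \to \infty$ by monotone convergence. Either way, the decomposition $f = f_1 + f_2$ satisfies both required estimates.
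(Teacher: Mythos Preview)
Your proof is correct and follows essentially the same route as the paper: set $F=\{|f|>2K^{1/2}\}$, apply the hypothesis to this $F$, use $|f|^2>4K$ on $F$ to absorb, and obtain $\int_F|f|^2\,d\eta\le\frac43\tau$. The only cosmetic difference is that the paper takes the simpler decomposition $f_1=f\chi_F$, $f_2=f\chi_{F^c}$ (no need for $\mathrm{sgn}$, which would require interpretation for the vector-valued $f$ arising in the application), and it phrases the absorption as first deducing $\eta(F)\le\tau/(3K)$ and then substituting back---but the arithmetic is identical.
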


\begin{proof}
Put $f_1:= f\chi_{\{|f| >2K^{\frac12}\}},\ f_2 :=  f\chi_{\{|f| \le 2K^{\frac12}\}}$. Then
$$
4K \eta(\{|f|>2K^{\frac12}\}) \le \int_{\{|f|>2K^{\frac12}\}} |f|^2\,d\eta \le \tau + K\eta(\{|f|>2K^{\frac12}\})\,,
$$
and therefore  $\eta(\{|f|>2K^{\frac12}\}) \le \frac{\tau}{3K}$. Hence,
$$
\int |f_1|^2\, d\eta  =  \int_{\{|f|>2K^{\frac12}\}} |f|^2\,d\eta \le \tau + K\eta(\{|f|>2K^{\frac12}\})\le \frac43\tau\,.
$$
The lemma is proved.
\end{proof}

Next we need the following modification of Guy David's lemma \cite{D} (or Lemma 4.1 from \cite{NTV}):

\begin{lemma}\label{GD}
For any operator $\widetilde{\mathfrak{R}}$ satisfying \eqref{f515}  with Calder\'on-Zygmund kernel, for any Borel set $E$, and for
any point $x\in\supp\eta$,
$$
\widetilde{\mathfrak{R}}_{*}\chi_E (x) \le C\,\widetilde{M} \widetilde{\mathfrak{R}}\chi_E (x) + A\,,\quad C=C(d)\,,
$$
where $A$ depends only on $d$ and  Calder\'on-Zygmund constants of the kernel.
\end{lemma}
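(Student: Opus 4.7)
The plan is to prove a Cotlar-type pointwise bound by comparing $\widetilde{\mathfrak{R}}_\varepsilon\chi_E(x)$ with values $\widetilde{\mathfrak{R}}\chi_E(y)$ at nearby points $y$ and then averaging. The far-field difference will be controlled by the H\"older smoothness of the Calder\'on--Zygmund kernel combined with $\eta\in\Sigma_s$, while the local part will be handled in $L^2$ average via the substitute estimate \eqref{f515} rather than full $L^2$ boundedness. The constant $A$ will absorb everything depending only on $d$, $s$, and the Calder\'on--Zygmund constants of the kernel, including the trivial bound $\chi_E\le 1$.

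\textbf{Pointwise comparison.} Fix $x\in\supp\eta$, $\varepsilon>0$, and an auxiliary radius $r\in(0,\varepsilon]$ to be chosen later. For $y\in B(x,r)$ I would split $\chi_E=\chi_{E\cap B(x,3\varepsilon)}+\chi_{E\setminus B(x,3\varepsilon)}$ and analyze the two pieces separately. A dyadic decomposition of the exterior of $B(x,3\varepsilon)$ together with the standard smoothness $|K^s(z-x)-K^s(z-y)|\le C|y-x||z-x|^{-s-1}$ and $\eta\in\Sigma_s$ yields
$$\bigl|\widetilde{\mathfrak{R}}_\varepsilon\chi_{E\setminus B(x,3\varepsilon)}(x)-\widetilde{\mathfrak{R}}\chi_{E\setminus B(x,3\varepsilon)}(y)\bigr|\le C\frac{|y-x|}{\varepsilon}\le A,$$
while $|\widetilde{\mathfrak{R}}_\varepsilon\chi_{E\cap B(x,3\varepsilon)}(x)|\le A$ directly, since $\psi_\varepsilon(|z-x|)=0$ on $B(x,\varepsilon)$ and $\eta(B(x,3\varepsilon))\le(3\varepsilon)^s$. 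Adding these two estimates and writing $\widetilde{\mathfrak{R}}\chi_E(y)=\widetilde{\mathfrak{R}}\chi_{E\cap B(x,3\varepsilon)}(y)+\widetilde{\mathfrak{R}}\chi_{E\setminus B(x,3\varepsilon)}(y)$ produces the key pointwise inequality
$$|\widetilde{\mathfrak{R}}_\varepsilon\chi_E(x)|\le|\widetilde{\mathfrak{R}}\chi_E(y)|+|\widetilde{\mathfrak{R}}\chi_{E\cap B(x,3\varepsilon)}(y)|+A\quad\text{for every }y\in B(x,r).$$

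\textbf{Averaging.} Integrating this bound against $\eta$ over $B(x,r)$ and dividing by $\eta(B(x,r))>0$, the definition of $\widetilde{M}$ gives
$$\frac{1}{\eta(B(x,r))}\int_{B(x,r)}|\widetilde{\mathfrak{R}}\chi_E|\,d\eta\le\frac{\eta(B(x,3r))}{\eta(B(x,r))}\,\widetilde{M}(\widetilde{\mathfrak{R}}\chi_E)(x),$$
while Cauchy--Schwarz combined with the substitute bound \eqref{f515} applied to $E'=E\cap B(x,3\varepsilon)$ and $F=B(x,r)$ yields
$$\frac{1}{\eta(B(x,r))}\int_{B(x,r)}|\widetilde{\mathfrak{R}}\chi_{E\cap B(x,3\varepsilon)}|\,d\eta\le C\Bigl(1+\frac{\eta(B(x,3\varepsilon))}{\eta(B(x,r))}\Bigr)^{1/2}.$$

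\textbf{The main obstacle: choosing $r$.} To conclude I need $r\in(0,\varepsilon]$ making both ratios $\eta(B(x,3r))/\eta(B(x,r))$ and $\eta(B(x,3\varepsilon))/\eta(B(x,r))$ bounded by absolute constants. In the favorable case $\eta(B(x,\varepsilon))\ge\tfrac1{10}\eta(B(x,3\varepsilon))$, the choice $r=\varepsilon$ works automatically with both constants at most $10$. Otherwise the mass in $B(x,3\varepsilon)$ lies predominantly in the annulus $B(x,3\varepsilon)\setminus B(x,\varepsilon)$, and the selection becomes delicate; I would argue by pigeonhole along the dyadic scales $r_k=3^{-k}\varepsilon$, using $\eta\in\Sigma_s$ together with the finiteness of $\eta$, to locate a scale where the required doubling-like inequality holds with universal constants. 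This non-doubling selection is the technical heart of the argument and mirrors the standard difficulty in Cotlar-type inequalities for non-homogeneous measures. Once a satisfactory $r$ is produced, taking the supremum over $\varepsilon>0$ on the left-hand side yields the asserted bound.
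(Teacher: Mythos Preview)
Your overall strategy---pointwise Cotlar-type comparison, averaging over a ball, and replacing the missing $L^2$ boundedness by the restricted estimate \eqref{f515}---is exactly what the paper does (following \cite{NTV}, Lemma~4.1). The gap is in your resolution of the ``main obstacle.''

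You propose to search among the \emph{smaller} scales $r_k=3^{-k}\varepsilon$. But the ratio $\eta(B(x,3\varepsilon))/\eta(B(x,r))$ that you need to bound can only get worse as $r$ decreases: the numerator is fixed and the denominator shrinks. Neither $\eta\in\Sigma_s$ (an \emph{upper} bound on balls) nor the finiteness of $\eta$ gives any lower bound on $\eta(B(x,r))$ for small $r$, so no pigeonhole at small scales can make this ratio universal. (Even the first ratio $\eta(B(x,3r))/\eta(B(x,r))$ cannot be controlled uniformly by going down: the telescoping product equals $\eta(B(x,3\varepsilon))/\eta(B(x,3^{-K}\varepsilon))$, which may blow up arbitrarily fast since the mass near $x$ can decay as rapidly as one likes.)

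The correct move, and the one the paper invokes from \cite{NTV}, is to go \emph{up}: choose the smallest $R\ge\varepsilon$ of the form $3^k\varepsilon$ for which the doubling condition $\eta(B(x,3R))\le C_0\,\eta(B(x,R))$ holds with a fixed $C_0>3^s$. Such an $R$ exists because $\eta$ is finite. Two things now happen. First, since every intermediate scale $3^j\varepsilon$ ($0\le j<k$) is non-doubling, one gets $\eta(B(x,3^{j}\varepsilon))\le C_0^{-(k-j)}\eta(B(x,R))$, and a dyadic summation together with $|K(y-x)|\le|y-x|^{-s}$ and $\eta\in\Sigma_s$ yields
\[
\bigl|\widetilde{\mathfrak{R}}_\varepsilon\chi_E(x)-\widetilde{\mathfrak{R}}_R\chi_E(x)\bigr|\le A,
\]
with $A$ depending only on $d$, $s$, and the Calder\'on--Zygmund constants. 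Second, you can now run your own averaging argument with the single choice $r=R$ and the splitting at scale $3R$ (not $3\varepsilon$): because $R$ is doubling, both ratios $\eta(B(x,3R))/\eta(B(x,R))$ are bounded by $C_0$, and the term coming from \eqref{f515} is at most $C(1+C_0)^{1/2}$. This is precisely the point the paper makes when it says ``the right hand side is at most $C\eta(B(x,R))$ because $3R$ is a doubling radius.''
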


\begin{proof}
We can try to repeat line by line the proof of Lemma 4.1 in \cite{NTV}, but we cannot use that $\widetilde{\mathfrak{R}}$ is $L^2(\eta)$--bounded. This is not given (and actually this is what we wish to prove), but instead
we just use \eqref{f515} to estimate the term on page 474 of \cite{NTV} as follows
$$
\bigg|\int\chi_{B(x,R)} \widetilde{\mathfrak{R}}\chi_{E\cap B(x,3R)}\,d\eta\bigg|
\le C(\eta(B(x, R) )^{\frac12}(\eta(B(x, R) + \eta(B(x,3R))^{\frac12}\,.
$$
This is enough to finish the proof exactly as in \cite{NTV}. In fact, the right hand side is at most $C\eta(B(x, R))$
because $3R$ is a doubling radius (being chosen in the proof  of Lemma 4.1 in \cite{NTV}).
\end{proof}

Notice a simple thing: the Calder\'on-Zygmund constants of the kernels of $\widetilde{\mathfrak{R}}_{\eta,\e}^s$  do not
depend on $\e$. This allows us to have uniform $C_0$ in  the following lemma.

\begin{lemma}\label{weakTlemma}
\begin{equation}\label{f516}
\|\widetilde{\mathfrak{R}}_{\eta,\e}^s\|_{L^1(\eta)\rightarrow L^{1,\infty}(\eta)} \le C_0\,,
\end{equation}
where $C_0$ depends only on $d$ and $s$ (and does not depend on $\e$).
\end{lemma}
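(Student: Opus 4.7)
The plan is to follow the non-homogeneous Calder\'on--Zygmund weak-$(1,1)$ argument of \cite{NTV}, replacing the $L^{2}(\eta)$ boundedness of $\widetilde{\mathfrak{R}}_{\eta,\e}^{s}$ (which is exactly what we are trying to prove) by the localized bound \eqref{f515} and the splitting provided by Lemma \ref{2inf}, while Lemma \ref{GD} plays the role of David's lemma in ensuring uniform-in-$\e$ control over the truncation scales that arise when summing contributions from bad cubes of different sizes. The Calder\'on--Zygmund constants of the kernel $\psi_{\e}(|x-y|)K^{s}(y-x)$ are independent of $\e$, so every constant produced along the way can be taken independent of $\e$.

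Fix $\lambda>0$ and $f\in L^{1}(\eta)$, and apply the non-homogeneous CZ decomposition at level $\lambda$: one obtains doubling cubes $\{Q_{i}\}$, an exceptional set $\Omega=\bigcup\alpha Q_{i}$ with $\eta(\Omega)\le C\|f\|_{L^{1}(\eta)}/\lambda$, and a splitting $f=g+b$, $b=\sum b_{i}$, with $\supp b_{i}\subset Q_{i}$, $\int b_{i}\,d\eta=0$, $\|b_{i}\|_{L^{1}(\eta)}\le C\lambda\,\eta(\alpha Q_{i})$, while $|g|\le C_{0}\lambda$ $\eta$-a.e.\ and $\|g\|_{L^{1}(\eta)}\le\|f\|_{L^{1}(\eta)}$. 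For the bad part, the standard cancellation estimate using $\int b_{i}\,d\eta=0$ and the CZ kernel smoothness yields $\int_{X\setminus\Omega}|\widetilde{\mathfrak{R}}_{\eta,\e}^{s}b|\,d\eta\le C\sum_{i}\|b_{i}\|_{L^{1}(\eta)}\le C\|f\|_{L^{1}(\eta)}$, so Chebyshev contributes $\le C\|f\|_{L^{1}(\eta)}/\lambda$; Lemma \ref{GD} is what allows one to pass between the truncation scales arising from different cubes without losing uniformity in $\e$.

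The essential new input is the treatment of the good part. Assume without loss of generality $g\ge 0$ (else split $g=g_{+}-g_{-}$), so the layer-cake identity $g(y)=\int_{0}^{C_{0}\lambda}\chi_{E_{t}}(y)\,dt$, $E_{t}:=\{g>t\}$, together with linearity, gives $\widetilde{\mathfrak{R}}_{\eta,\e}^{s}g=\int_{0}^{C_{0}\lambda}\widetilde{\mathfrak{R}}_{\eta,\e}^{s}\chi_{E_{t}}\,dt$. By \eqref{f515} the function $\widetilde{\mathfrak{R}}_{\eta,\e}^{s}\chi_{E_{t}}$ satisfies the hypothesis of Lemma \ref{2inf} with $\tau=C\eta(E_{t})$ and $K=C$, hence decomposes as $u_{t}+v_{t}$ with $\|v_{t}\|_{\infty}\le 2\sqrt{C}$ and $\|u_{t}\|_{L^{2}(\eta)}^{2}\le\tfrac{4}{3}C\eta(E_{t})$. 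Setting $U=\int_{0}^{C_{0}\lambda}u_{t}\,dt$ and $V=\int_{0}^{C_{0}\lambda}v_{t}\,dt$, we get $\|V\|_{\infty}\le 2\sqrt{C}\,C_{0}\lambda$, and Cauchy--Schwarz in $t$ combined with $\int_{0}^{C_{0}\lambda}\eta(E_{t})\,dt=\|g\|_{L^{1}(\eta)}\le\|f\|_{L^{1}(\eta)}$ gives
\[\|U\|_{L^{2}(\eta)}^{2}\le C_{0}\lambda\int_{0}^{C_{0}\lambda}\|u_{t}\|_{L^{2}(\eta)}^{2}\,dt\le \tfrac{4}{3}C\,C_{0}\,\lambda\,\|f\|_{L^{1}(\eta)}.\]
Chebyshev then bounds $\eta\{|U|>\lambda\}\le C\|f\|_{L^{1}(\eta)}/\lambda$; since $\|V\|_{\infty}\le C'\lambda$ can be absorbed by enlarging the threshold by a universal factor (which only changes $C_{0}$), the good part contributes the desired weak-type estimate.

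The main obstacle is not the CZ bookkeeping but the conceptual point of circumventing the missing global $L^{2}$ bound: the localized inequality \eqref{f515} by itself is insufficient for a direct Chebyshev argument on $\widetilde{\mathfrak{R}}_{\eta,\e}^{s}g$, and the resolution is precisely the layer-cake plus Lemma \ref{2inf} mechanism, which upgrades the local estimate at the level of characteristic functions into a global $L^{2}$-plus-$L^{\infty}$ decomposition compatible with integration in the level parameter $t$. Joint measurability of $(t,x)\mapsto\widetilde{\mathfrak{R}}_{\eta,\e}^{s}\chi_{E_{t}}(x)$ (needed to apply Fubini) follows from the joint measurability of $(t,y)\mapsto\chi_{E_{t}}(y)$. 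Combining the contributions of $\Omega$, the bad part off $\Omega$, and the good part yields $\eta\{|\widetilde{\mathfrak{R}}_{\eta,\e}^{s}f|>\lambda\}\le C_{0}(d,s)\|f\|_{L^{1}(\eta)}/\lambda$, which is \eqref{f516}.
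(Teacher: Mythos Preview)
Your argument is correct but follows a genuinely different route from the paper's. The paper first reduces to the case where $\nu$ is a finite positive combination of point masses and then repeats the proof of Theorem~5.1 in \cite{NTV} line by line; in that proof the $L^2(\eta)$ boundedness of $\widetilde{\mathfrak{R}}$ is invoked in exactly two places, and the paper replaces each invocation separately: the Cotlar--type inequality (Lemma~4.1 of \cite{NTV}) is replaced by Lemma~\ref{GD}, and the estimate of the term $I=\bigl|\int\chi_F\,\widetilde{M}\widetilde{\mathfrak{R}}\chi_E\,d\eta\bigr|$ (with $\eta(F)=\eta(E)$) is handled by applying Lemma~\ref{2inf} \emph{once} to $f=\widetilde{\mathfrak{R}}\chi_E$ and then using the $L^2$ and $L^\infty$ boundedness of $\widetilde{M}$ on the two pieces. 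You instead perform a direct non-homogeneous Calder\'on--Zygmund decomposition $f=g+b$: the bad part goes through by the standard H\"ormander smoothness estimate (so your appeal to Lemma~\ref{GD} there is in fact unnecessary --- no maximal operator enters your scheme), while for the good part you use the layer-cake identity $g=\int_0^{C\lambda}\chi_{E_t}\,dt$ and apply Lemma~\ref{2inf} to every $\widetilde{\mathfrak{R}}\chi_{E_t}$, integrating the resulting $L^2$ and $L^\infty$ pieces in $t$ via Cauchy--Schwarz. Your approach is more self-contained in that it sidesteps both the point-mass approximation and the Cotlar/maximal-function machinery of \cite{NTV}, at the price of importing a non-homogeneous CZ decomposition (available e.g.\ in Tolsa's work); the paper's approach has the virtue that the reader can simply cross-reference \cite{NTV} for all details except the two clearly isolated substitutions.
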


\begin{proof}
It is sufficient to prove that
\begin{equation}\label{f517}
\|\widetilde{R}_{\nu,\e}^s\|_{L^{1,\infty}(\eta)} \le C\,\|\nu\|\,,
\end{equation}
where $\nu$ is a finite linear combination of unit point masses with positive coefficients. To show that, note first of
all that \eqref{f517} remains valid (with constant doubled) for $\nu$ with arbitrary real coefficients (just write $\nu$
as a difference of two measures with positive coefficients). Fix $f\in L^1(\eta)$, $\e>0$, and a ball $B\subset\R^d$.
Since the kernel of $\widetilde{\mathfrak{R}}_{\nu,\e}^s$ forms an equicontinuous family of functions
$k_x(y):=\psi_{\e}(|x-y|)K^s(y-x)$, we can approximate $f\,d\eta$ by a measure $\nu$ of the form $\nu=\sum_{j=1}^M
\a_j\d_{y_j}$, $M\in\mathbb N_+$, $\a_j\in\R$, in such a way that
$$
(\{|\widetilde{\mathfrak{R}}_{\eta,\e}^s f(x)|>t\}\cap B)\subset(\{|\widetilde{R}_{\nu,\e}^s(x)|>\tfrac12 t\}\cap B).
$$
With this end in view we cover a sufficiently large ball $B'\supset B$ by the net of small cubes $Q_j$, and take
$\a_j=\int_{Q_j}f\,d\eta$. Using \eqref{f517}, we have
$$
\eta(\{|\widetilde{\mathfrak{R}}_{\eta,\e}^s f(x)|>t\}\cap B)\le\eta(\{|\widetilde{R}_{\nu,\e}^s(x)|>\tfrac12 t\})
\le2C\,\|\nu\|\le2C\,\|f\|_{L^1(\eta)}\,.
$$
Since $B$ is arbitrarily large, we get \eqref{f516} with $C_0=2C$.

The proof of \eqref{f517} is a modification of the proof of Theorem 5.1 of \cite{NTV}. This latter can be repeated
line by line till we come to p.~477 of \cite{NTV}. There we need to estimate ($\widetilde{\mathfrak{R}}:=\widetilde{\mathfrak{R}}_{\eta,\e}^s$,
$\widetilde{\mathfrak{R}}_{*}:=\widetilde{\mathfrak{R}}_{\eta,\ast}^s$)
$$
 \widetilde{\mathfrak{R}}_{*}\chi_E \le C\, \widetilde{M} \widetilde{\mathfrak{R}} \chi_E + A\,,
 $$
 where constants $C,A$ depend only on $s,d$. Lemma 4.1 of \cite{NTV} is not applicable here, but we have the replacement,
 namely, Lemma \ref{GD} above.

Another hitch is that we are required to estimate the following expression:
$$
I:=\bigg|\int \chi_F \widetilde{M}  \widetilde{\mathfrak{R}} \chi_E\,d\eta\bigg|\,.
$$
Here
\begin{equation}\label{518}
\eta(F)= \eta(E)\,,
\end{equation}
 and one is tempted to use \eqref{f515} to obtain $I\le C\eta(E)$, which would be enough to finish
the proof verbatim as in \cite{NTV}, p.~477.
However, we cannot use here \eqref{f515}. In fact, \eqref{f515} does not have any $\widetilde{M}$ in it.

This is why here we need to be more subtle, and we need to use Lemma \ref{2inf}. Keep in mind \eqref{f515}
and apply Lemma \ref{2inf} to $f:= \widetilde{\mathfrak{R}} \chi_E,\ K= C,\ \tau = C\eta(E)$. Then we get
$$
\widetilde{\mathfrak{R}} \chi_E = f_1 + f_2,\ \ \|f_2\|_{\infty}\le C,\ \ \|f_1\|_2^2 \le C\eta(E)\,.
 $$
Hence,
$$
\|\widetilde{M}f_2\|_{\infty} \le C,\ \ \|\widetilde{M}f_1\|_{L^2(\eta)}^2 \le C'C\eta(E)\,,
$$
where $C':= \|\widetilde{M}\|_{L^2(\eta)\rightarrow L^2(\eta)}$. This constant is absolute.
As $I \le\int_F |\widetilde{M}f_1| \,d\eta +\int_F|\widetilde{M}f_2|\,d\eta$, it becomes obvious that
$$
I\le C''\eta(E)^{\frac12} \eta(F)^{\frac12} + C\eta(F)\,.
$$
We already mentioned \eqref{f517}. So,
$$
I \le C\,\eta(E)\,,
$$
and the proof of Lemma \ref{weakTlemma} is finished exactly as that of Lemma 5.1 of \cite{NTV}.
\end{proof}

We started from \eqref{f510} and we derived from it the uniform bound $C_0\mathbf S^{\frac12}$ for the norms
$\|\widetilde{\mathfrak{R}}_{\mu,\e}^s\|_{L^1(\mu)\rightarrow L^{1, \infty}(\mu)}$. Now we just use Theorem 10.1 from \cite{NTV},
which claims that for operator with Calder\'on-Zygmund kernel the boundedness of the latter norms implies its
boundedness in $L^2(\mu)$ by $C\,\mathbf S^{\frac12}$.

It is the time to recall that we have \eqref{f511}. Therefore, we obtained the uniform estimate for
$\|\mathfrak{R}_{\mu,\e}^s\|_{L^2(\mu)\rightarrow L^{2}(\mu)} $.
\end{proof}

From Theorem \ref{th46}, we derive a useful corollary for Cantor sets.
Let $\ell_0,\dots,\ell_n$ and $\l$ be such that
$$
0<\ell_{k+1}<\l \ell_k,\quad k=0,\dots,n-1,\quad 0<\l<1/2.
$$
For $N$ of the form $N=2^{nd}$ we consider $N$ Cantor cubes $Q_j^n,\ j=1,\dots,N$, built by the usual procedure
from the cube $Q_1^0$ with edge length $\ell_0$ by the corner construction, namely having $2^d$ corner cubes
$Q_j^1$ with edge length $\ell_1$, $2^{2d}$ cubes $Q_j^2$ with edge length $\ell_2$, et cetera.
Let $E_n=\bigcup_j Q_j^n$, and let $m$ be
the measure uniform on each $n$-cube and of mass $2^{-nd}$ on each $Q_j^n$. Set
$$
\theta_{s,k}=\theta_k=\frac{2^{-kd}}{\ell_k^s}.
$$

\begin{corollary}\label{cor52}
For the measure $m$ defined above,
\begin{equation}\label{f519}
\pmb|\mathfrak{R}_{m}^s\pmb|^2\le C\sum_{k=0}^n\theta_k^2,\quad C=C(d,s).
\end{equation}
\end{corollary}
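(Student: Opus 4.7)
The plan is to apply Theorem \ref{th46} directly to the measure $m$, which reduces the corollary to the pointwise bound
$$
W^m(x) := \int_0^\infty \left[\frac{m(B(x,r))}{r^s}\right]^2 \frac{dr}{r} \le C(d,s) \sum_{k=0}^n \theta_k^2, \qquad x \in \supp m = E_n.
$$
Thus the task becomes the combinatorial-geometric one of controlling the Cantor mass function $r \mapsto m(B(x,r))$ at every scale and then integrating.

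The geometric input I will use is standard for this kind of Cantor construction and comes from the hypothesis $\ell_{k+1} < \l \ell_k$ with $\l < 1/2$. This forces the $2^d$ generation-$(k+1)$ corner subcubes of any generation-$k$ cube to be mutually separated by a gap of at least $(1-2\l)\ell_k > 0$, so a simple volume-packing argument shows that any ball of radius $r < \ell_k$ meets at most $C_d$ Cantor cubes of generation $k$. Combined with the fact that $m$ is Lebesgue measure on each terminal $n$-cube renormalized to have mass $2^{-nd}$, this produces three scale-dependent estimates:
$$
m(B(x,r)) \le \begin{cases} 1 & \text{if } r \ge \ell_0, \\ C_d\, 2^{-kd} & \text{if } \ell_{k+1} \le r < \ell_k,\ 0 \le k \le n-1, \\ C_d (r/\ell_n)^d\, 2^{-nd} & \text{if } 0 < r \le \ell_n. \end{cases}
$$

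The final step is purely computational: I split $\int_0^\infty$ along these three scale ranges. The tail $[\ell_0,\infty)$ contributes at most $\theta_0^2/(2s)$; each intermediate strip $[\ell_{k+1},\ell_k]$ yields at most $C_d^2\, 2^{-2kd}\int_{\ell_{k+1}}^{\ell_k} r^{-2s-1}\,dr \le C(d,s)\, \theta_{k+1}^2$; and the innermost range $(0,\ell_n]$ gives $\frac{C_d^2}{2(d-s)}\, \theta_n^2$, where I use the hypothesis $0<s<d$ of Theorem \ref{th46} to make the integral $\int_0^{\ell_n} r^{2(d-s)-1}\,dr$ converge. Summing these three contributions yields $W^m(x) \le C(d,s) \sum_{k=0}^n \theta_k^2$ uniformly on $E_n$, and Theorem \ref{th46} closes the argument. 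There is no essential obstacle; the only subtlety worth flagging is the bottom scale, where one must exploit the $L^1$-density of $m$ inside each $n$-cube rather than the crude bound $m(B(x,r))\le 2^{-nd}$, otherwise the innermost integral would diverge.
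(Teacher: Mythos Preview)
Your proof is correct and follows essentially the same approach as the paper: apply Theorem~\ref{th46}, bound $m(B(x,r))$ by the three-case estimate (Lebesgue density below $\ell_n$, $C_d2^{-kd}$ on $[\ell_{k+1},\ell_k]$, and $1$ above $\ell_0$), and integrate each range to produce the corresponding $\theta_k^2$. Your explicit justification of the packing bound via the separation $(1-2\lambda)\ell_k$ and your remark about needing the density (not just the crude mass) at the bottom scale are both accurate and slightly more detailed than what the paper writes.
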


\begin{proof}
Denote by $\rho$ the maximal density of $m$, that is $\rho=2^{-nd}/\ell_n^d$. For every $x\in E_n$ we have
$$
m(B(x,r))\le\left\{\begin{array}{ll}
\rho r^d,&0<r<\ell_n,\\
C2^{-kd},&\ell_k\le r\le \ell_{k-1},\ k=1,\dots,n,\\
1,&\ell_0\le r<\infty,
\end{array}\right.
$$
with the positive constant $C$ depending only on $d$. Hence, for every $x\in\R^d$,
$$\aligned
\int_0^{\infty}\bigg[\frac{m(B(x,r))}{r^s}\bigg]^2\,\frac{dr}r
&\le\int_0^{\ell_n}\biggl(\frac{2^{-nd}}{\ell_n^d}\biggr)^2r^{2d-2s-1}\,dr
+C\sum_{k=1}^n2^{-2kd}\int_{\ell_k}^{\ell_{k-1}}\frac{dr}{r^{2s+1}}+\int_{\ell_0}^{\infty}\frac{dr}{r^{2s+1}}\\
&\le\biggl(\frac{2^{-nd}}{\ell_n^d}\biggr)^2\frac{\ell_n^{2d-2s}}{2d-2s}+
C\sum_{k=1}^n\frac1{2s}\frac{2^{-2kd}}{\ell_k^{2s}}+\frac1{2s}\,\frac1{\ell_0^{2s}}\\
&\le C(d,s)\sum_{k=0}^n\biggl(\frac{2^{-kd}}{\ell_k^s}\biggr)^2.
\endaligned$$
Now (\ref{f519}) follows immediately from (\ref{f411}).
\end{proof}

It was proved in \cite{MT} that under the condition $\theta_{k+1}\le\theta_k$
\begin{equation}\label{f520}
C^{-1}\sum_{k=0}^n\theta_k^2\le\pmb|\mathfrak{R}_{m}^s\pmb|^2\le C\sum_{k=0}^n\theta_k^2,
\end{equation}
where $C$ depends on $\l$, $s$ and $d$. Thus, Theorem \ref{th46} is a generalization of the estimate from
above in (\ref{f520}).

Due to X.~Tolsa \cite{T1} we know now that the condition $\theta_{k+1}\le\theta_k$ is superfluous in the estimate from
below as well.

\section{Construction of the auxiliary measure}

We start with one property of Hausdorff contents.

\begin{lemma}\label{le61}
Let $h$ be a measuring function. For given $t_1>0$ we set

\begin{equation}\label{f61}
\overline{h}(t)=\left\{\begin{array}{ll}
t^dh(t_1)t_1^{-d},&0\le t<t_1,\\
h(t),&t\ge t_1.
\end{array}\right.
\end{equation}
If $\a\in(0,1)$, and the sets $F,G\in\R^d$ are such that
$$
B(x,\a t_1)\subset G\quad \text{for every } x\in F,
$$
then
$$
M_h(F)\le CM_{\overline{h}}(G),\quad C=C(\a,d).
$$
\end{lemma}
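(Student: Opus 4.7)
The plan is to fix an arbitrary ball covering $\{B_j\}=\{B(y_j,r_j)\}$ of $G$ and construct from it a cover of $F$ whose total $h$-content is bounded, up to a factor $C(\alpha,d)$, by $\sum_j\overline{h}(r_j)$; taking the infimum will then yield the lemma. I would split the indices into \emph{large} ones ($r_j\ge\alpha t_1$) and \emph{small} ones ($r_j<\alpha t_1$), and partition $F$ into $F_1$, the points $x\in F$ whose auxiliary ball $B(x,\alpha t_1)$ meets at least one large $B_j$, and $F_2=F\setminus F_1$.

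For $F_1$: if $B(x,\alpha t_1)\cap B(y_j,r_j)\ne\emptyset$ with $r_j\ge\alpha t_1$, then $|x-y_j|<r_j+\alpha t_1\le 2r_j$, so the enlarged family $\{B(y_j,2r_j):r_j\ge\alpha t_1\}$ covers $F_1$. A routine check using that $h(r)/r^d$ is non-increasing and the definition of $\overline{h}$ gives $h(2r_j)\le C(\alpha,d)\,\overline{h}(r_j)$ for $r_j\ge\alpha t_1$ (splitting into the cases $r_j\ge t_1$, where $\overline{h}(r_j)=h(r_j)$, and $\alpha t_1\le r_j<t_1$, where $\overline{h}(r_j)\ge\alpha^d h(t_1)\ge\alpha^d h(r_j)$).

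For $F_2$: by the definition of $F_2$ and the hypothesis $B(x,\alpha t_1)\subset G$, each $x\in F_2$ has $B(x,\alpha t_1)$ covered only by small $B_j$'s that meet it, so a volume comparison gives $\sum_{j\in J(x)}r_j^d\ge\alpha^d t_1^d$ with $J(x):=\{j:r_j<\alpha t_1,\ B_j\cap B(x,\alpha t_1)\ne\emptyset\}$. Select a maximal $\alpha t_1$-separated subset $\{x_i\}\subset F_2$; then $\bigcup_i B(x_i,\alpha t_1)$ covers $F_2$. For a fixed small index $j$, any $x_i$ with $j\in J(x_i)$ satisfies $|x_i-y_j|<2\alpha t_1$, and since the balls $B(x_i,\alpha t_1/2)$ are pairwise disjoint and contained in $B(y_j,5\alpha t_1/2)$, at most $5^d$ such $i$'s exist. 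Summing,
\[
N\alpha^d t_1^d\le\sum_i\sum_{j\in J(x_i)}r_j^d\le 5^d\sum_{j:\,r_j<\alpha t_1}r_j^d,
\]
where $N=\#\{x_i\}$. The $F_2$-contribution to the cover is at most $N\,h(\alpha t_1)\le N\,h(t_1)\le C(\alpha,d)\,t_1^{-d}h(t_1)\sum_{j:\,r_j<\alpha t_1} r_j^d=C(\alpha,d)\sum_{j:\,r_j<\alpha t_1}\overline{h}(r_j)$, using $\overline{h}(r)=r^d h(t_1)/t_1^d$ for $r<t_1$.

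Adding both contributions gives a cover of $F$ of total $h$-content at most $C(\alpha,d)\sum_j\overline{h}(r_j)$; taking the infimum over ball coverings of $G$ yields $M_h(F)\le C(\alpha,d)M_{\overline{h}}(G)$. The main obstacle is the $F_2$ step: the hypothesis $B(x,\alpha t_1)\subset G$ is precisely what triggers the volume comparison at scale $\alpha t_1$, letting one trade the (potentially many) small balls in a covering of $G$ for a controlled number of balls of radius $\alpha t_1$ covering $F_2$, with a compatible bookkeeping of $\overline{h}$-mass.
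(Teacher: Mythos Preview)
Your proof is correct and follows essentially the same approach as the paper's: split the covering balls by radius, double the large ones to absorb nearby points of $F$, and use a Lebesgue volume comparison to control the contribution of the small ones. The only differences are bookkeeping: the paper sets the large/small threshold at $t_1$ rather than $\alpha t_1$ and organizes the points of $F$ via a grid of cubes of side $\alpha t_1/\sqrt{d}$, whereas you use a maximal $\alpha t_1$-separated subset of $F_2$; both devices serve the same purpose of ensuring each small ball is charged a bounded number of times.
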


\begin{proof}
We cover $\R^d$ by the net of cubes $Q_l$ with edge length $\a t_1/\sqrt d$. For fixed $\e>0$ we take a
covering of $G$ by balls $B_j=B(x_j,r_j)$, such that
$$
M_{\overline{h}}(G)+\e>\sum_j \overline{h}(r_j).
$$
Let $Q_l$ be a cube from our net for which $Q_l\cap F\ne\varnothing$. Then $Q_l\subset G\subset\bigcup_j B_j$.
We replace each ball $B_j$ with $r_j\ge t_1$  by the ball $B'_j=2B_{j}$. We remark that
$h(r_j)\ge2^{-d}h(2r_j)$. If $Q_l$ intersects a ball
$B_j$ with $r_j\ge t_1$, then $Q_l\subset2B_j$. Suppose now that $Q_l$ intersects only balls $B_{j}$ with
$r_j<t_1$. These balls cover $Q_l$, and in turn can be covered and substituted by a ball of radius $3t_1$.
Since $\sum_{j:B_j\cap Q_l\ne\varnothing}r_j^d\ge c_d(\a t_1)^d$, we have
$$
\sum_{j:B_j\cap Q_l\ne\varnothing}\overline{h}(r_j)\ge
\frac{\overline{h}(3t_1)}{(3t_1)^d}\sum_{j:B_j\cap Q_l\ne\varnothing}r_j^d
\ge c(d,\a)\overline{h}(3t_1).
$$
We discard all (possibly) remaining balls $B_{j}$ with $r_j<t_1$. Thus, we obtain a new covering of $F$ by balls
$B'_{j}=B(x'_j,r'_j)$ with $r'_j\ge t_1$. Since each $B_{j}$ of $r_j<t_1$ intersects at most $C=C(d,\a)$ cubes
$Q_l$, we have
$$\aligned
M_{\overline{h}}(G)+\e&>\sum_j \overline{h}(r_j)>c(d,\a)\sum_j \overline{h}(r'_j)\\
&=c(d,\a)\sum_j h(r'_j)\ge c(d,\a)M_h(F).
\endaligned$$
\end{proof}

\begin{lemma}\label{le62}
Let $P>0$ be given, and let $\nu$ be a linear combination of $N$ Dirac point masses. There is a constant
$C_1$ depending only on $d$ and $s$, with the following property. If
\begin{equation}\label{f62}
\mathbf M:=M_h(\ZZ^\ast(\nu,P))>\frac{C_1\|\nu\|}P\max_{t_1\le t\le t_2}\frac{h(t)}{t^s},
\end{equation}
where $t_1=h^{-1}(0.1\mathbf M/N),\ t_2=h^{-1}(\mathbf M)$, then there exists a positive Borel measure
$\mu$ such that

1) $\supp\mu\subset\ZZ^\ast(\nu,0.8P)$;

2) $c\mathbf M\le\|\mu\|\le\mathbf M,\ c=c(d)$;

3) for every ball $B(x,r)\subset\R^d$, one has
\begin{equation}\label{f63}
\mu(B(x,r))\le\overline{h}(r),
\end{equation}
where $\overline{h}(r)$ is defined by (\ref{f61}).
\end{lemma}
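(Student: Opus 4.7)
The plan is to construct $\mu$ via a Frostman-type dyadic procedure at scales $\ge t_1$ on a subset of $\ZZ^\ast(\nu,P)$, then verify via a continuity argument that its support lands in $\ZZ^\ast(\nu,0.8P)$. The hypothesis (\ref{f62}) will enter as a quantitative smallness of $\|\nu\|/t^s$ on the interval $[t_1,t_2]$, which controls the perturbation of the Riesz transform under small translations.

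First I peel off the atoms of $\nu$: let $\{y_j\}_{j=1}^N$ be the atoms and set $E=\ZZ^\ast(\nu,P)\setminus\bigcup_{j=1}^N B(y_j,t_1)$. Since $h(t_1)=0.1\mathbf M/N$, subadditivity of $M_h$ gives $M_h(E)\ge 0.9\mathbf M$, and every point of $E$ is at distance $\ge t_1$ from every atom. I then run a standard dyadic Frostman construction at scales $\ge t_1$: tile $\R^d$ by cubes of edge $t_1$, assign each cube meeting $E$ an initial mass $h(t_1)$ spread as a constant multiple of Lebesgue measure on the cube, and walk up the dyadic hierarchy trimming the total mass in every parent cube $Q'$ to at most $h(\operatorname{diam}Q')$. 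The uniform base-scale density $\le h(t_1)/t_1^d$ delivers $\mu(B(x,r))\le\overline h(r)$ for $r\le t_1$ directly from the definition of $\overline h$, while dyadic trimming combined with $\overline h\equiv h$ above $t_1$ gives $\mu(B(x,r))\le C(d)\overline h(r)$ for $r\ge t_1$. Normalising against the trivial single-ball bound $\overline h(t_2)=\mathbf M$ then yields $c\mathbf M\le\|\mu\|\le\mathbf M$; the lower bound uses $M_h(E)\asymp M_{\overline h}(E)$, which follows from the monotonicity of $h(r)/r^d$ since any cover by balls of radius $<t_1$ can be aggregated into balls of radius $\ge t_1$ at controlled $h$-content.

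For the support condition, take $z\in\supp\mu$, find $x\in E$ with $|z-x|\le t_1$, and pick $\varepsilon(x)>0$ with $|R^s_{\nu,\varepsilon(x)}(x)|>P$. With $\varepsilon'=\varepsilon(x)+2|z-x|$, the difference $R^s_{\nu,\varepsilon'}(z)-R^s_{\nu,\varepsilon(x)}(x)$ splits into a bulk term coming from Lipschitz differences of the kernel on the ``far'' atoms $|y_j-x|\ge\varepsilon(x)+3|z-x|$, of size $\lesssim|z-x|\,\|\nu\|/(\max(\varepsilon(x),t_1))^{s+1}$, and a boundary-shell term from atoms in the annulus $\varepsilon(x)\le|y_j-x|\le\varepsilon(x)+3|z-x|$. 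Hypothesis (\ref{f62}) applied at a scale $t\in[t_1,t_2]$ rearranges to a quantitative smallness $\|\nu\|\,h(t)/t^s\lesssim P\mathbf M/C_1$; with $C_1=C_1(d,s)$ large enough, both error terms can be pushed below $0.1P$, so $|R^s_{\nu,\varepsilon'}(z)|>0.8P$ and $z\in\ZZ^\ast(\nu,0.8P)$.

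The main obstacle is the boundary-shell term together with the fact that the naive worst-case bulk estimate scales like $NP/C_1$ and so forces the thickening radius to depend on $N$ if one proceeds crudely. Both issues are handled by a more refined Frostman step, in which within each $t_1$-cube the mass is placed not uniformly but on a carefully chosen ``good'' sub-region where the Riesz transform stays above $0.8P$ and the atoms of $\nu$ avoid the critical shell around $x$; the selection of $\varepsilon(x)$ is made by an averaging or stopping-time argument on a dyadic window of scales. It is precisely the quantitative form of (\ref{f62}) — controlling $\mathbf M$ against $(\|\nu\|/P)\cdot\max_{[t_1,t_2]}h(t)/t^s$ — that guarantees this good subset occupies a definite fraction of every cube, thereby preserving the total mass bound $\|\mu\|\gtrsim\mathbf M$.
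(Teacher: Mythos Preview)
Your proposal has the right overall shape (remove neighborhoods of the atoms, run a Frostman construction at scales $\ge t_1$, then thicken using continuity of the truncated Riesz transform), and it is the same shape as the paper's argument. But you have correctly diagnosed, and then failed to close, the central difficulty.

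The issue is the continuity step. After removing only the balls $B(y_j,t_1)$, the best bound you have on the local mass of $|\nu|$ near a point $x\in E$ is the trivial one, and as you note yourself, both the bulk Lipschitz term and the boundary-shell term come out as $\lesssim \|\nu\|/t_1^s \lesssim NP/C_1$, which is useless. Your proposed remedy---placing mass only on a ``good'' sub-region of each $t_1$-cube, selected by some stopping-time/averaging argument---is not a proof: you do not explain why such a region occupies a definite fraction of each cube, nor what the stopping-time is stopping on. The hypothesis \eqref{f62} by itself does not obviously force anything about the geometry of the atoms inside an individual $t_1$-cube.

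The missing idea (which is what the paper does) is to remove, \emph{before} the Frostman step, a second exceptional set: the set of ``non-normal'' points $x$ for which $|\nu|(B(x,r))>C_2^{-1}P\rho^{-1}h(r)$ for some $r>0$, where $\rho=\max_{[t_1,t_2]}h(t)/t^s$. A Besicovitch covering argument, together with \eqref{f62}, shows this set has $h$-content $<0.3\mathbf M$. On the remaining set $F$ one has the uniform density bound $|\nu|(B(x,r))<C_2^{-1}Pr^s$ for \emph{all} $r>0$ (the three ranges $r<t_1$, $t_1\le r\le t_2$, $r>t_2$ are handled separately). With this in hand the continuity estimate is clean: integrating by parts,
\[
\int_{|y-x_0|>\varepsilon}\frac{|x-x_0|}{|y-x_0|^{s+1}}\,d|\nu|(y)
\lesssim |x-x_0|\int_\varepsilon^\infty \frac{|\nu|(B(x_0,t))}{t^{s+2}}\,dt
\lesssim \frac{P}{C_2}\cdot\frac{|x-x_0|}{\varepsilon},
\]
and the shell term is bounded by $|\nu|(\overline B(x_0,\varepsilon))/(\varepsilon/2)^s\lesssim P/C_2$; both are $<0.1P$ for $C_2$ large. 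This gives $B(x_0,0.4t_1)\subset\ZZ^\ast(\nu,0.8P)$ for every $x_0\in F$, and then Frostman plus Lemma~\ref{le61} finish. The crucial point you are missing is that \eqref{f62} is used not to control errors pointwise, but to bound the $h$-content of the high-density exceptional set globally.
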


For $d=2$ a similar assertion was proved in \cite{E2} (see Lemma~5.1 in \cite{E2}). The proof given here essentially
differs from the arguments in \cite{E2}.

\begin{proof}[Proof of Lemma \ref{le62}]
Let $|\nu|=\sum_{j=1}^N|\nu_j|\delta_{y_j}$ be the variation of $\nu$. First of all we exclude from $\R^d$ the set
with high density of $|\nu|$ and with comparatively ``small" $h$-content.

We say that a point $x\in\R^d$ is {\it normal} (with respect to $|\nu|$ and $h$) if the inequality
\begin{equation}\label{f64}
|\nu|(B(x,r))\le C_2^{-1}P\rho^{-1}h(r),\quad \rho:=\max_{t_1\le t\le t_2}\frac{h(t)}{t^s},
\end{equation}
holds for all $r\ge0$. Here $C_2<C_1$ is the constant depending only on $d$ and $s$, which will be specified later.
Let $G_1$ be the set of non-normal points $x\in\R^d$. For each $x\in G_1$ there exists $r=r(x)$ such that
\begin{equation}\label{f65}
h(r)<C_2P^{-1}\rho|\nu|(B(x,r)).
\end{equation}
We obtain a covering of $G_1$ by balls. Since (see (\ref{f62}), (\ref{f65}))
$$
h(t_2)=\mathbf M>\frac{C_1\|\nu\|}P\rho\ge\frac{C_1|\nu|(B(x,r))}{P}\rho>h(r),
$$
the radii of these balls are bounded by $t_2$. By Besicovitch's covering lemma (see for example \cite{Ma}, p.~30),
there is a subcovering $\{B'_k\}$, $B'_k=B(w'_k,r'_k)$, of multiplicity not exceeding $A_d$ (that is, every point
$x\in G$ is covered by at most $A_d$ balls $B'_k$). We set $\ZZ_1=\bigcup_k B'_k$. Then
\begin{equation*}
M_h(\ZZ_1)\le\sum_k h(r'_k)\overset{(\ref{f65})}<\frac{C_2\rho}{P}\sum_k|\nu|(B'_k)
\le \frac{A_dC_2\rho}{P}\|\nu\|\overset{(\ref{f62})}<0.3\mathbf M,
\end{equation*}
if $A_dC_2<0.3C_1$. We denote by $\{y_1,\dots,y_N\}$ the support of $\nu$. Set $\ZZ_2=\bigcup_{k=1}^NB(y_k,t_1)$. Then
$$
M_h(\ZZ_2)\le Nh(t_1)=0.1\mathbf M.
$$
We claim that
\begin{equation}\label{f66}
|\nu|(B(x,r))<C_2^{-1}Pr^s \quad\text{ for all } x\in\R^d\setminus(\ZZ_1\cup\ZZ_2)\text{ and } r>0.
\end{equation}
Indeed, $|\nu|(B(x,r))=0$ when $0\le r<t_1$. For $t_1\le r<t_2$, the inequality (\ref{f66}) follows
from (\ref{f64}). Assume that $r\ge t_2$. By (\ref{f62}),
$$
\mathbf M>\frac{C_1\|\nu\|}P\,\frac{h(t_2)}{t_2^s}
=\frac{C_1\|\nu\|}P\,\frac{\mathbf M}{t_2^s}.
$$
Hence,
$$
|\nu|(B(x,r))\le\|\nu\|<C_1^{-1}Pt_2^{s}\le C_1^{-1}Pr^s,
$$
and we get (\ref{f66}). We set
$$
F=\ZZ^\ast(\nu,P)\setminus(\ZZ_1\cup\ZZ_2).
$$
Clearly,
\begin{equation}\label{f67}
M_h(F)\ge\mathbf M-M_h(\ZZ_1)-M_h(\ZZ_2)>\mathbf M-0.3\mathbf M-0.1\mathbf M=0.6\mathbf M.
\end{equation}
We will prove that
\begin{equation}\label{f68}
B(x_0,0.4t_1)\subset\ZZ^\ast(\nu,0.8P)\quad\text{for every } x_0\in F.
\end{equation}
Fix $x_0\in F$, and let $x$ be such that $|x-x_0|<0.4t_1$.
Since $x_0\in\ZZ^\ast(\nu,P)$, there is $\e>0$, for which $|R_{\nu,\e}^s(x_0)|>P$. We may assume that $\e>0.9\,t_1$.
(Indeed, $|\nu|(B(x_0,r))=0$ as $0\le r<t_1$. Hence, all integrals $R_{\nu,\e}^s(x_0)$ are the same for $0<\e<t_1$.)
Obviously,
\begin{multline}\label{f69}
|R_{\nu,\e}^s(x_0)-R_{\nu,\e-|x-x_0|}^s(x)|\\
\le\int_{|y-x_0|>\e}\biggl|\frac{y-x_0}{|y-x_0|^{s+1}}-\frac{y-x}{|y-x|^{s+1}}\biggr|\,d|\nu|(y)
+\int_{\{|y-x|>\e-|x-x_0|\}\cap\{|y-x_0|\le\e\}}\frac{1}{|y-x|^{s}}\,d|\nu|(y).
\end{multline}
Since $|y-x_0|>\e>0.9\,t_1$ implies $|y-x|>|y-x_0|-0.4t_1>0.5|y-x_0|$, we have
\begin{equation*}
\biggl|\frac{y-x_0}{|y-x_0|^{s+1}}-\frac{y-x}{|y-x|^{s+1}}\biggr|
\le C(s)\frac{|x-x_0|}{|y-x_0|^{s+1}}<\frac{C(s)\e}{|y-x_0|^{s+1}}.
\end{equation*}
Hence, the first integral on the right hand side of (\ref{f69}) does not exceed
\begin{multline*}
C(s)\e\int_{|y-x_0|>\e}\frac{d|\nu|(y)}{|y-x_0|^{s+1}}=
C(s)\e\int_{\e}^\infty\frac{d|\nu|(B(x_0,t))}{t^{s+1}}\\
\le C(s)\e\int_{\e}^\infty\frac{(s+1)|\nu|(B(x_0,t))}{t^{s+2}}\,dt
\le\frac{C(s)\e(s+1)P}{C_2}\int_{\e}^\infty\frac{dt}{t^2}<0.1P,
\end{multline*}
if $C_1$ and $C_2$ are big enough (we integrated by parts and used (\ref{f66})). Using (\ref{f66}) again we see
that the second integral on the right hand side of (\ref{f69}) is bounded by
$$
\frac{|\nu|(\overline{B}(x_0,\e))}{(\e-|x-x_0|)^{s}}<\frac{P\e^s}{C_2(0.5\e)^s}<0.1P,
$$
if $C_2>10\cdot2^s$. Thus,
$$
|R_{\nu,\e}^s(x_0)-R_{\nu,\e-|x-x_0|}^s(x)|<0.2P\quad\text{for all } x\in B(x_0,0.4t_1).
$$
Since $|R_{\nu,\e}^s(x_0)|>P$ and $x_0$ is any point in $F$, we get (\ref{f68}).

We consider the compact set
$$
E=\overline{\bigcup_{x\in F}B(x,0.2t_1)}.
$$
By (\ref{f68}), $E\subset\ZZ^\ast(\nu,0.8P)$. By Frostman's theorem (see for example \cite{Ca}, p.~7)
there is a measure $\mu$ supported by $E$ such that
$$
\|\mu\|\ge a_d M_{\overline{h}}(E)\quad\text{and}\quad \mu(B(x,r))\le\overline{h}(r)\text{ for every }B(x,r)\subset\R^d.
$$
Lemma \ref{le61} yields
$$
\|\mu\|\ge a_d M_{\overline{h}}(E)\ge c(d)M_h(F)\overset{(\ref{f67})}>c'(d)\mathbf M.
$$

It remains to consider the inequality $\|\mu\|\le\mathbf M$. If $\|\mu\|>\mathbf M$, we multiply $\mu$ by the constant
$\mathbf M/\|\mu\|<1$, fulfilling in this way all the requirements of Lemma \ref{le62}.
\end{proof}

\section{Proof of the first part of Theorem \ref{th41}}

As above, we set $\mathbf M=M_h(\ZZ^\ast(\nu,P))$. It is enough to prove the inequality
\begin{equation}\label{f71}
\mathbf M\le C(s,d)\frac{\|\nu\|}{P}
\left[\int_{t_1}^{t_2}\bigg(\frac{h(t)}{t^s}\bigg)^2\frac{dt}{t}\right]^{1/2},
\end{equation}
where $t_1=h^{-1}(0.1\,\mathbf M/N),\ t_2=h^{-1}(\mathbf M)$. Then Lemma \ref{le32} will imply the desired assertion.

We remark that
\begin{equation}\label{f72}
\max_{t_1\le t\le t_2}\frac{h(t)}{t^s}=:\frac{h(t_0)}{t_0^s}<C_3
\left[\int_{t_1}^{t_2}\bigg(\frac{h(t)}{t^s}\bigg)^2\frac{dt}{t}\right]^{1/2},\quad C_3=C_3(d,s).
\end{equation}
Indeed, we have
$$
0.05>\frac{0.1}N=\frac{h(t_1)}{h(t_2)}=\frac{h(t_1)}{t_1^d}\,\frac{t_2^d}{h(t_2)}\,
\bigg(\frac{t_1}{t_2}\bigg)^d\ge\bigg(\frac{t_1}{t_2}\bigg)^d.
$$
Hence, $t_1\le(0.05)^{1/d}t_2$. There is the interval $[\a,\beta]\subset[t_1,t_2]$, containing $t_0$ and such that
$\a=c(d)\beta$, $0<c(d)<1$. Since $h(t)>c'(d)h(t_0)$ as $t\in[\a,\beta]$, we get
\begin{equation*}
\frac{h(t_0)}{t_0^s}<C(d,s)
\left[\int_{\a}^{\beta}\bigg(\frac{h(t)}{t^s}\bigg)^2\frac{dt}{t}\right]^{1/2},
\end{equation*}
that implies (\ref{f72}). So, if
$$
\mathbf M\le\frac{C_1\|\nu\|}{P}\max_{t_1\le t\le t_2}\frac{h(t)}{t^s},
$$
then (\ref{f71}) holds, and the first part of Theorem \ref{th41} is proved. Thus, we may assume that (\ref{f62}) is
fulfilled. Let $\mu$ be the measure in Lemma \ref{le62}. Relations (\ref{f63}) and $\|\mu\|\le\mathbf M=h(t_2)$
imply the estimate
$$
\frac{\mu(B(x,r))}{r^s}\le\max_{t_1\le t\le t_2}\frac{h(t)}{t^s} \quad\text{for every ball }\ B(x,r)\subset\R^d.
$$
Set
$$
a_h:=C_3\left[\int_{t_1}^{t_2}\bigg(\frac{h(t)}{t^s}\bigg)^2\frac{dt}{t}\right]^{1/2},\quad \eta:=a_h^{-1}\mu.
$$
Then $\eta\in\Sigma_s$ (see (\ref{f72}) and (\ref{f49})). Moreover, by (\ref{f61}) for every $x\in\R^d$, we have
$$
\int_0^\infty\bigg[\frac{\mu(B(x,r))}{r^s}\bigg]^2\,\frac{dr}{r}\le\frac{h^2(t_1)}{t_1^{2d}}\int_0^{t_1}t^{2d-2s-1}\,dt+
\int_{t_1}^{t_2}\bigg[\frac{h(t)}{t^s}\bigg]^2\,\frac{dt}t+\int_{t_2}^{\infty}\frac{\|\mu\|^2}{t^{2s+1}}\,dt\le C(d,s)a_h^2
$$
(we remind the reader that $\|\mu\|\le\mathbf M=h(t_2)$). Theorem \ref{th46} yields
\begin{equation}\label{f73}
\pmb|\mathfrak{R}_{\eta}^s\pmb|^2=a_h^{-2}\pmb|\mathfrak{R}_{\mu}^s\pmb|^2\le C,\quad C=C(d,s).
\end{equation}
We apply Theorem \ref{th45} with $t=0.8P$ and $\eta$ instead of $\mu$. By (\ref{f73}), the constant $C$ in
(\ref{f410}) depends only on $d$ and $s$. Since
$\supp\eta\subset\ZZ^\ast(\nu,0.8P)$, (\ref{f410}) and the properties 1), 2) in Lemma \ref{le62} imply
\begin{equation*}
\frac{C\|\nu\|}{0.8P}>\eta\big(\ZZ^\ast(\nu,0.8P)\big)=\|\eta\|=a_h^{-1}\|\mu\|\ge c(d)a_h^{-1}\mathbf M,
\end{equation*}
that is equivalent to (\ref{f71}).\hfill$\square$

\section{Proof of the second part of Theorem \ref{th41}}

Without loss of generality we may assume that $N=2^{nd}$. Fix $\mathbf M>0$, and let
$$
\ell_j=h^{-1}(2^{-dj}\mathbf M),\ j=0,\dots,n-1,\quad \ell_n=\frac15h^{-1}(2^{-dn}\mathbf M).
$$
Since $t^{-d}h(t)$ is non-increasing, $\ell_{j+1}\le\frac12\ell_j$. Choose the set $J\subset\{0,\dots,n\}$
inductively as follows: $0\in J$; if $j\in J$, then the least $k>j$ such that $\ell_k\le\frac152^{j-k}\ell_j$,
also belongs to $J$. If $j_0,j_1,\dots,j_m$ are the elements of $J$ listed in the increasing order, then
$j_0=0$, $j_m=n$, and for every $k=0,\dots,m-1$ we have
$$
\ell_{j_k}\ge2\ell_{j_k+1}\ge4\ell_{j_k+2}\ge\dots\ge2^{j_{k+1}-j_k-1}\ell_{j_{k+1}-1}
\ge\frac15\ell_{j_k}\ge 2^{j_{k+1}-j_k}\ell_{j_{k+1}}\,.
$$

Construct the random set $E$ recursively as follows: $E_m$ is just the cube with edge length $\ell_{j_m}$
centered at the origin. Suppose that $E_{k+1}$ is already defined as a random set. Let $Q$ be the cube with
edge length $\frac15\ell_{j_k}$ centered at the origin. Partition it into $D_k:=2^{d(j_{k+1}-j_k-1)}$
equal subcubes. Let $x_1,\dots,x_{D_k}$ be the centers of those subcubes. Take $D_k$ independent copies of
$E_{k+1}$ and define $F_k$ to be the union of those copies shifted by $x_1,\dots,x_{D_k}$. Now take $2^d$
independent copies of $F_k$ and shift them by
$$
\frac{\ell_{j_k}}5(\e+v_\e),
$$
where $\e$ runs over all $2^d$ vectors in $\R^d$ whose coordinates are $\pm1$, and $v_\e$ are independent
random vectors uniformly distributed over the cube with edge length $\frac1{10}$ centered at the origin and
also independent of all $F_k$. The resulting random set is $E_k$.

It is easy to show by induction that each $E_k$ is contained in the cube with edge length $\ell_{j_k}$
centered at the origin and, for $k<n$, the $2^d$ shifted copies of $F_k$ whose union is $E_k$ are separated
by $\frac1{10}\ell_{j_k}$. Note also that $E=E_0$ consists of $2^{nd}$ randomly located
cubes with edge length $\ell_n$, which we will call the base cubes.


\begin{figure}[h]
\includegraphics{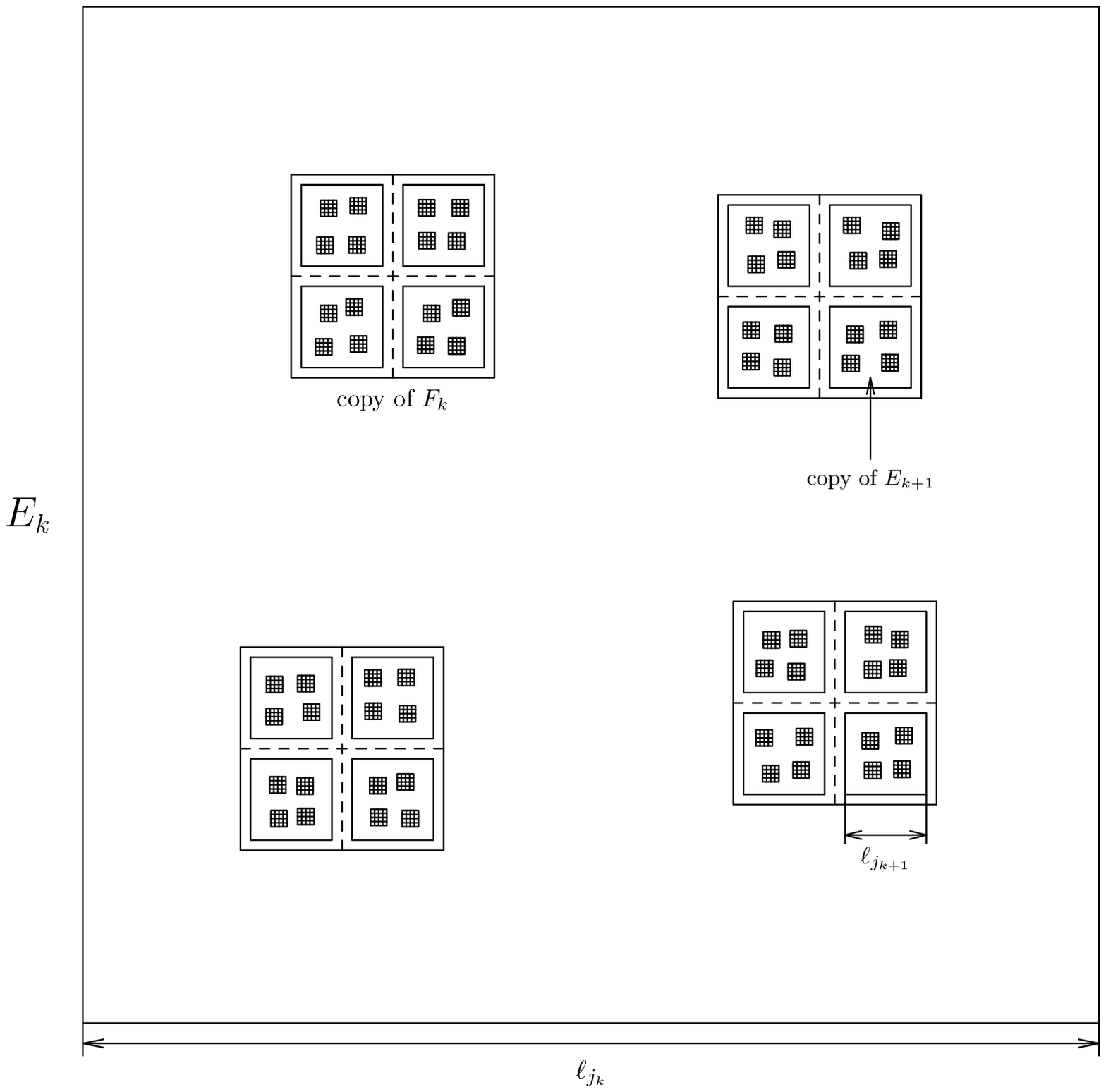}
\end{figure}

Denote by $\widetilde E$ the sure set constructed exactly in the same way as $E$, but with $v_\e=0$.

Define the random measures $\mu$ and $\nu$ supported by $E$ as follows. For each base cube $Q$, we put
$$
\mu(Q)=2^{-nd}\,,\,\,\nu(Q)=2^{-nd}\eta.
$$
The measure $\mu$ will be proportional to Lebesgue measure on each base cube $Q$, and the measure $\nu$
will be a multiple of the Dirac point mass located at the center of $Q$.

\begin{lemma}\label{le81}
For every ball $B_r\subset\R^d$ of radius $r>0$, we have
$$
\mathbf M\mu(B_r)\le C h(r)\ \text{ with some }\ C=C(d,s)>0.
$$
\end{lemma}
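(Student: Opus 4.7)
The plan is to estimate $\mu(B_r)$ by case analysis on the size of $r$ relative to the scales $\ell_0=\ell_{j_0}>\ell_{j_1}>\cdots>\ell_{j_m}=\ell_n$ of the construction, using that each copy of $E_k$ carries total $\mu$-mass $2^{-dj_k}$ and that $\mathbf M\cdot 2^{-dj_k}\asymp h(\ell_{j_k})$ with constants depending only on $d$ (exact for $k<m$, and with a factor of $5^d$ at $k=m$ because of the factor $\tfrac15$ in the definition of $\ell_n$). For $r\ge\ell_0$ the bound is immediate from $\mu(B_r)\le\|\mu\|=1$ and $\mathbf M=h(\ell_0)\le h(r)$. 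In the opposite extreme $r\le\ell_n$, I would first note that the subcubes holding the base cubes inside a single $F_{m-1}$ have edge $\tfrac15\ell_{j_{m-1}}/2^{j_m-j_{m-1}-1}\ge 2\ell_n$ (from the recursion inequality $\tfrac15\ell_{j_{m-1}}\ge 2^{j_m-j_{m-1}}\ell_n$ already used in the displayed chain defining $J$). Hence $B_r$ meets at most $C_d$ base cubes, and on each of them $\mu$ is $(2^{-nd}/\ell_n^d)$ times Lebesgue measure, so $\mu(B_r)\le C_d r^d\,2^{-nd}/\ell_n^d$; the bound $\mathbf M\mu(B_r)\le Ch(r)$ then follows from the non-increasing property of $h(t)/t^d$ applied twice, first to $r\le\ell_n$ and then to the identity $h(5\ell_n)=2^{-nd}\mathbf M$.

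For the main range $\ell_n<r<\ell_0$, let $k\in\{0,\dots,m-1\}$ be the unique index with $\ell_{j_{k+1}}\le r<\ell_{j_k}$. I would prove that $B_r$ meets at most $C_d(r/\ell_{j_{k+1}})^d$ copies of $E_{k+1}$ in $E_0$. Once this is in hand, since each such copy has $\mu$-mass $2^{-dj_{k+1}}$ and $\mathbf M\cdot 2^{-dj_{k+1}}\le C\,h(\ell_{j_{k+1}})$,
$$
\mathbf M\,\mu(B_r)\le C_d\,r^d\,\frac{h(\ell_{j_{k+1}})}{\ell_{j_{k+1}}^d}\le C\,h(r),
$$
where the last step uses $r\ge\ell_{j_{k+1}}$ together with the non-increasing property of $h(t)/t^d$.

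The counting reduces to the geometric claim that distinct copies of $E_{k+1}$ inside $E_0$ sit inside cubes of edge $\ell_{j_{k+1}}$ whose centers are pairwise at distance $\ge c\,\ell_{j_{k+1}}$, $c=c(d)>0$. Three regimes occur: within the same $F_k$ the centers lie on a grid of spacing $\tfrac15\ell_{j_k}/2^{j_{k+1}-j_k-1}\ge 2\ell_{j_{k+1}}$; in distinct $F_k$'s sharing a common $E_k$-ancestor the separation is $\ge\tfrac1{10}\ell_{j_k}\ge\ell_{j_{k+1}}$ (from $\ell_{j_k}\ge 5\cdot 2^{j_{k+1}-j_k}\ell_{j_{k+1}}\ge 10\,\ell_{j_{k+1}}$ because $j_{k+1}-j_k\ge 1$); and across different $E_k$-ancestors the separation inherited from a higher level is even larger. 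A standard volume packing argument then yields at most $C_d(r/\ell_{j_{k+1}})^d$ admissible centers inside $B_r$, using $r\ge\ell_{j_{k+1}}$. The random shifts $v_\e$ at level $k$ enter the construction through $\tfrac15\ell_{j_k}\,v_\e$, hence are bounded in norm by $\tfrac{\sqrt d}{100}\ell_{j_k}$, which is much smaller than the $\tfrac1{10}\ell_{j_k}$ clearance they perturb; consequently all separation estimates, and with them the lemma itself, hold for every realization of the random construction with constants depending only on $d$. The main technical obstacle is the bookkeeping of the three separation regimes and verifying the packing count uniformly across the entire hierarchy.
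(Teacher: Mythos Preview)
Your proof is correct and follows essentially the same three-case strategy as the paper (small $r$ via the uniform density bound, large $r$ trivially, intermediate $r$ by counting pieces at the appropriate scale). The only difference is cosmetic: in the middle range the paper first observes that $B_r$ meets only a bounded number of copies of $E_k$ (since $r\le\ell_{j_k}$ and these copies sit in disjoint cubes of edge $\ell_{j_k}$) and then counts grid cells within each block $F_k$, arriving at a two-term bound $Cr^d\,2^{-j_kd}/\ell_{j_k}^d+C2^{-j_{k+1}d}$, whereas you count copies of $E_{k+1}$ directly via the packing argument and obtain the single-term bound $C(r/\ell_{j_{k+1}})^d\,2^{-dj_{k+1}}$; both routes finish with the same application of the monotonicity of $h(t)/t^d$.
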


\begin{proof}
Note first of all that the base cubes are disjoint. Thus, the density of $\mu$ with respect to the Lebesque
measure is not greater than
$$
\frac{2^{-nd}}{\ell_n^d}=\frac{5^d\mathbf M^{-1}h(5\ell_n)}{(5\ell_n)^d}.
$$
Hence,
$$
\mu(B_r)\le C\mathbf M^{-1}r^d\,\frac{h(r)}{r^d}=C\mathbf M^{-1}h(r),
$$
if $r<5\ell_n$.

Suppose now that $\ell_0\ge r\ge5\ell_n$. Then $r\in(\ell_{j_{k+1}},\ell_{j_k}]$ for some $k$. Note that $B_r$
can intersect only a bounded number of random copies of $E_k$ constituting $E$ because each such copy lies in its
own cube with edge length $\ell_{j_k}$.

Now $E_k$ consists of $2^d$ blocks $F_k$. In each block the $\mu$-measure of every cube of edge length
$$
\frac{\ell_{j_k}}5\,2^{j_k+1-j_{k+1}}\ge2\ell_{j_{k+1}}
$$
is $2^{-j_{k+1}d}$. Since $B_r$ can intersect at most
$$
\bigg[\frac{5r}{\ell_{j_k}}\,2^{j_{k+1}-j_k}+1\bigg]^d\le
C_d\bigg[\bigg(\frac{5r}{\ell_{j_k}}\,2^{j_{k+1}-j_k}\bigg)^d+1\bigg]
$$
such cubes, we conclude that
$$\aligned
\mu(B_r)&\le C r^d\,\frac{2^{-j_kd}}{\ell_{j_k}^d}+C 2^{-j_{k+1}d}\le
C \mathbf M^{-1}r^d\,\frac{h(\ell_{j_k})}{\ell_{j_k}^d}+C \mathbf M^{-1}h(5\ell_{j_{k+1}})\\
&\le C \mathbf M^{-1}r^d\,\frac{h(r)}{r^d}+C'\mathbf M^{-1}h(\ell_{j_{k+1}})
<C'' \mathbf M^{-1}h(r).
\endaligned$$
Finally, if $r\ge\ell_0$, we have
$$
\mu(B_r)\le\mu(E)=1=\mathbf M^{-1}h(\ell_0)\le\mathbf M^{-1}h(r).
$$
\end{proof}

\begin{corollary}\label{co82}
For every Borel set $G\subset\R^d$, we have
$$
M_h(G)\ge C^{-1}\mathbf M\mu(G),\ \ C=C(d,s)>0.
$$
\end{corollary}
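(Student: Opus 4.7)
The proof is a direct Frostman-style deduction from the mass-on-balls estimate of Lemma \ref{le81}. The plan is to take any covering of $G$ by balls that nearly realizes the infimum in the definition of $M_h(G)$, bound $\mu(G)$ by countable subadditivity, and apply Lemma \ref{le81} ball-by-ball to compare the two sums.

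Concretely, fix $\varepsilon > 0$ and choose an at most countable covering $\{B(x_j, r_j)\}$ of $G$ with
$$\sum_j h(r_j) < M_h(G) + \varepsilon.$$
Since $\mu$ is a Borel measure, countable subadditivity together with Lemma \ref{le81} gives
$$\mu(G) \le \sum_j \mu(B(x_j, r_j)) \le C\,\mathbf{M}^{-1} \sum_j h(r_j) < C\,\mathbf{M}^{-1}\bigl(M_h(G) + \varepsilon\bigr).$$
Letting $\varepsilon \to 0^+$ and rearranging yields $M_h(G) \ge C^{-1} \mathbf{M}\, \mu(G)$ with the same constant $C = C(d,s)$ as in Lemma \ref{le81}.

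The only point worth emphasizing is the compatibility of the two objects being compared: $M_h$ is defined through coverings by balls (not by arbitrary sets), and Lemma \ref{le81} provides the required pointwise control $\mathbf{M}\mu(B_r) \le C\,h(r)$ for \emph{every} ball, with no restriction on its center or radius. There is no genuine obstacle — this is a routine consequence of Lemma \ref{le81}.
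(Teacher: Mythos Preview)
Your proof is correct and is exactly the intended argument: the paper states this as a corollary without proof, since it follows immediately from Lemma~\ref{le81} by the standard covering argument you wrote out.
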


\begin{lemma}\label{le83}
Let $\theta_k>0,\ k=0,\dots,m-1$. Let $\xi_k$ be $\R^d$-valued independent random variables satisfying
$$
|\xi_k|\le C\theta_k,\quad \sum_k\Var\xi_k\ge c\sum_k\theta_k^2.
$$
Then there exists $\d=\d(C,c,d)>0$ such that
$$
\mathscr{P}\bigg\{\bigg|\sum_{k=0}^{m-1}\xi_k+a\bigg|\ge\d\bigg(\sum_{k=0}^{m-1}\theta_k^2\bigg)^{1/2}\bigg\}\ge\d
\ \text{ for all }\ a\in\R^d.
$$
\end{lemma}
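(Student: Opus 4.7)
The plan is to prove the anti-concentration estimate via the Paley--Zygmund (second moment) inequality applied to $|W - b|^2$, where $W := \sum_k (\xi_k - \mathbb{E}\xi_k)$ is the centered sum and $b := -\bigl(a + \mathbb{E}\!\sum_k \xi_k\bigr)$ is chosen so that $\sum_k \xi_k + a = W - b$. Writing $Y_k := \xi_k - \mathbb{E}\xi_k$ and $\sigma^2 := \sum_k \theta_k^2$, the variables $Y_k$ are independent, centered, satisfy $|Y_k| \le 2C\theta_k$, and $\sum_k \mathbb{E}|Y_k|^2 = \sum_k \Var \xi_k \ge c\sigma^2$ (with $\Var$ understood as the trace of the covariance matrix for vector-valued random variables). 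Replacing the original problem by one of controlling $|W - b|$ uniformly in $b\in\R^d$ is the essential simplification.

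First I would record the second moment: since $\mathbb{E} W = 0$,
$$
\mathbb{E}|W - b|^2 = \mathbb{E}|W|^2 + |b|^2 \ge c\sigma^2.
$$
The heart of the proof is a matching fourth-moment bound $\mathbb{E}|W - b|^4 \le K\bigl(\mathbb{E}|W - b|^2\bigr)^2$. It suffices to show $\mathbb{E}|W|^4 \le K_0 (\mathbb{E}|W|^2)^2$ with $K_0 = K_0(C,c)$ and combine with the crude estimate $\mathbb{E}|W - b|^4 \le 8(\mathbb{E}|W|^4 + |b|^4) \le 8(K_0 + 1)(\mathbb{E}|W|^2 + |b|^2)^2$. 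I would obtain the bound on $\mathbb{E}|W|^4$ by expanding $|W|^2 = A + B$ with $A := \sum_j |Y_j|^2$ and $B := 2\sum_{j<k} Y_j \cdot Y_k$: centering and independence give $\mathbb{E} B = 0$ and $\mathbb{E}(AB) = 0$, so $\mathbb{E}|W|^4 = \mathbb{E} A^2 + \mathbb{E} B^2$. The bound $\mathbb{E}|Y_j|^4 \le (2C\theta_j)^2 \mathbb{E}|Y_j|^2$ combined with $\theta_j^2 \le \sigma^2 \le c^{-1}\mathbb{E}|W|^2$ controls the diagonal part of $\mathbb{E} A^2$, while the off-diagonal part telescopes into $(\mathbb{E}|W|^2)^2$. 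For $\mathbb{E} B^2$, only the pairings $(j,k) = (j',k')$ survive by independence, and $\mathbb{E}(Y_j \cdot Y_k)^2 \le \mathbb{E}|Y_j|^2\,\mathbb{E}|Y_k|^2$ follows from the trace inequality $\mathrm{tr}(\Sigma_j \Sigma_k) \le \mathrm{tr}(\Sigma_j)\,\mathrm{tr}(\Sigma_k)$ applied to the covariance matrices $\Sigma_k := \mathbb{E}(Y_k Y_k^T)$.

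Finally, Paley--Zygmund applied to $Z := |W - b|^2$ yields $\mathscr{P}(Z \ge \tfrac12 \mathbb{E} Z) \ge \tfrac14 (\mathbb{E} Z)^2/\mathbb{E} Z^2 \ge 1/\bigl(32(K_0 + 1)\bigr)$; since $\tfrac12 \mathbb{E} Z \ge \tfrac{c}{2}\sigma^2$, this translates into $\mathscr{P}\bigl(|W - b| \ge \sqrt{c/2}\,\sigma\bigr) \ge 1/\bigl(32(K_0 + 1)\bigr)$, and the lemma follows with $\delta := \min\bigl(\sqrt{c/2},\, 1/(32(K_0 + 1))\bigr)$. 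The main obstacle is the fourth-moment bookkeeping for $\mathbb{E}|W|^4$: all non-diagonal contributions must be killed using independence and centering, and the uniform bound $|Y_k| \le 2C\theta_k$ must be converted, via the hypothesis $\sum_k \mathbb{E}|Y_k|^2 \ge c\sigma^2$, into the clean reverse Khintchine-type inequality $\mathbb{E}|W|^4 \lesssim (\mathbb{E}|W|^2)^2$. The shift by $b$ then enters painlessly through the identity $\mathbb{E}|W - b|^2 = \mathbb{E}|W|^2 + |b|^2$, which is precisely what makes the anti-concentration uniform in $a$.
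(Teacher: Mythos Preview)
Your proof is correct but follows a genuinely different route from the paper. The paper first reduces to $d=1$ by choosing a unit vector $e$ with $\sum_k\Var\langle\xi_k,e\rangle\ge\frac{c}{d}\sum_k\theta_k^2$, then uses a characteristic-function argument: it bounds $|\mathscr{E}e^{i\lambda\sigma}|$ away from $1$ via the Taylor expansion $|\mathscr{E}e^{i\lambda\zeta_k}|\le\exp(-\tfrac{\lambda^2}{2}\Var\xi_k+C\lambda^3\theta_k^3)$ with a well-chosen $\lambda\asymp(\sum\theta_k^2)^{-1/2}$, and then reads off the anti-concentration from the elementary inequality $|\mathscr{E}e^{i\lambda\sigma}-1|\le\lambda\delta(\sum\theta_k^2)^{1/2}+2\mathscr{P}\{|\sigma|>\delta(\sum\theta_k^2)^{1/2}\}$. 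Your argument stays in $\R^d$ throughout and replaces the Fourier machinery by a second/fourth-moment computation followed by Paley--Zygmund applied to $|W-b|^2$; the centering $\mathbb{E}W=0$ plays the same role here (making the bound uniform in $b$) that the identity $|e^{i\lambda a}|=1$ plays in the paper's approach (making the bound uniform in $a$). Your route is somewhat more elementary in that it avoids characteristic functions entirely, at the cost of the fourth-moment bookkeeping you outline; the paper's route is shorter once one is comfortable with the Esseen-type trick, and the reduction to $d=1$ spares it the trace inequality $\mathrm{tr}(\Sigma_j\Sigma_k)\le\mathrm{tr}(\Sigma_j)\,\mathrm{tr}(\Sigma_k)$ that you need.
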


\begin{proof}
Denote
$$
\s=\sum_{k=0}^{m-1}\xi_k+a,\quad \zeta_k=\xi_k-\mathscr{E}\xi_k.
$$
It is enough to consider the case $d=1$. Indeed, since $\int_{|e|=1}\Var\langle\xi_k,e\rangle\,dm(e)=\frac1d\Var\xi_k$,
we can find a unit vector $e$ such that $\sum_k\Var\langle\xi_k,e\rangle\ge\frac cd\sum_k\theta_k^2$. But, obviously,
$\langle\xi_k,e\rangle\le C\theta_k$ and $|\s|\ge|\sum_k\langle\xi_k,e\rangle+\langle a,e\rangle|$.

Take $\l>0$ and consider
$$
|\mathscr{E}e^{i\l\s}|=\bigg|e^{i\l a}\prod_{k=0}^{m-1}\mathscr{E}e^{i\l\xi_k}\bigg|\\
=\prod_{k=0}^{m-1}|\mathscr{E}e^{i\l\xi_k}|=\prod_{k=0}^{m-1}|\mathscr{E}e^{i\l\zeta_k}|.
$$
Note now that
$$
e^{i\l\zeta_k}=1+i\l\zeta_k-\frac{\l^2}2\zeta_k^2+O(\l^3\zeta_k^3).
$$
Hence,
$$
|\mathscr{E}e^{i\l\zeta_k}|=\bigg|1-\frac{\l^2}2\Var\xi_k+C\l^3\theta_k^3\bigg|\le
\exp\bigg(-\frac{\l^2}2\Var\xi_k+C\l^3\theta_k^3\bigg),
$$
and
$$\aligned
\prod_{k=0}^{m-1}|\mathscr{E}e^{i\l\zeta_k}|&\le
\exp\left(-c\l^2\sum\theta_k^2+C\l^3\sum\theta_k^3\right)\\
&\le\exp\bigg(-c'\left[\l\left(\sum\theta_k^2\right)^{1/2}\right]^2+
C'\left[\l\left(\sum\theta_k^2\right)^{1/2}\right]^3\bigg).
\endaligned$$
Now choose
$$
\l=\frac{c'}{2C'}\left(\sum\theta_k^2\right)^{-1/2}.
$$
Then
$$
|\mathscr{E}e^{i\l\s}|\le\exp\bigg(-\frac{(c')^3}{8(C')^2}\bigg).
$$
On the other hand, for every $\d>0$, one has
$$\aligned
|\mathscr{E}e^{i\l\s}-1|&\le\l\d\left(\sum\theta_k^2\right)^{1/2}+
2\mathscr{P}\bigg\{|\s|>\d\left(\sum\theta_k^2\right)^{1/2}\bigg\}\\
&\le\frac{c'}{2C'}\d+2\mathscr{P}\bigg\{|\s|>\d\left(\sum\theta_k^2\right)^{1/2}\bigg\}.
\endaligned$$
Hence,
$$
\mathscr{P}\bigg\{|\s|>\d\left(\sum\theta_k^2\right)^{1/2}\bigg\}\ge
\frac12\bigg[1-\exp\bigg(-\frac{(c')^3}{8(C')^2}\bigg)-\frac{c'}{2C'}\d\bigg]>\d,
$$
if $\d$ is chosen small enough.
\end{proof}

\begin{lemma}\label{le84}
Let $\nu$ be a positive measure supported by the cube $Q$ with edge length $\ell$ centered at the origin.
Let $x\in[\frac54\ell,\frac{11}4\ell]^d$. Let $v$ be the random vector uniformly distributed over the
cube with edge length $\frac12\ell$ centered at the origin. Then the random variable $\xi=R_{\nu}^s(x+v)$
satisfies
$$
|\xi|\le C\theta,\quad \Var\xi\ge c\theta^2,\ \text{ where }\theta=\frac{\|\nu\|}{\ell^s}.
$$
\end{lemma}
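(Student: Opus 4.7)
The approach rests on two elementary observations. Geometrically, since $x \in [\tfrac{5}{4}\ell, \tfrac{11}{4}\ell]^d$, $v$ lies in a cube of edge $\tfrac{\ell}{2}$ centered at the origin, and $y \in Q = [-\tfrac{\ell}{2},\tfrac{\ell}{2}]^d$, every coordinate $(x+v)_j - y_j$ lies in $[\tfrac{\ell}{2}, \tfrac{7\ell}{2}]$; in particular $\tfrac{\sqrt{d}\,\ell}{2} \le |y-(x+v)| \le \tfrac{7\sqrt{d}\,\ell}{2}$. The upper bound is then immediate: $|K^s(y-(x+v))| = |y-(x+v)|^{-s} \le C_{d,s}/\ell^s$, so integrating against $\nu$ gives $|\xi| \le C\theta$.

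For the variance lower bound, the plan is to exhibit a scalar component of $F(v) := R_\nu^s(x+v)$ that is monotone in one coordinate of $v$, with derivative uniformly bounded below in absolute value by $c\|\nu\|/\ell^{s+1}$. In $d=1$ this is direct: $(x+v)-y>0$ throughout, hence $F(v) = -\int ((x+v)-y)^{-s}\,d\nu(y)$ and $F'(v) = s\int ((x+v)-y)^{-s-1}\,d\nu(y) \ge c\|\nu\|/\ell^{s+1}$. For $d \ge 2$, I would take $F_1(v) = \int (y_1 - (x+v)_1)|y-(x+v)|^{-(s+1)}\,d\nu(y)$ and differentiate in $v_2$:
$$
\partial_{v_2} F_1(v) \;=\; -(s+1)\int \frac{(y_1 - (x+v)_1)(y_2 - (x+v)_2)}{|y - (x+v)|^{s+3}}\,d\nu(y).
$$
Both factors in the numerator are $\le -\ell/2$, so their product is $\ge (\ell/2)^2$, and the denominator is $\le (7\sqrt{d}\,\ell/2)^{s+3}$; consequently $|\partial_{v_2} F_1(v)| \ge c(d,s)\|\nu\|/\ell^{s+1}$ pointwise on the whole cube.

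I would then invoke the classical identity $\Var(g(T)) = \tfrac{1}{2}\mathscr{E}(g(T_1) - g(T_2))^2$ for iid copies $T_1, T_2$ of $T$: if $g$ is monotone with $|g'| \ge m$ on an interval, then $(g(T_1)-g(T_2))^2 \ge m^2(T_1-T_2)^2$ and hence $\Var(g(T)) \ge m^2 \Var(T)$. Conditioning on all coordinates of $v$ other than $v_2$ and applying this with $T = v_2$ (uniform on $[-\ell/4,\ell/4]$, so $\Var(T) = \ell^2/48$) and $m = c\|\nu\|/\ell^{s+1}$, I obtain $\Var(F_1(v)\,|\,v_1,v_3,\dots,v_d) \ge c'\|\nu\|^2/\ell^{2s}$; averaging over the remaining coordinates and using $\Var(\xi) \ge \Var(F_1(v))$ yields $\Var(\xi) \ge c\theta^2$. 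In $d=1$ the same estimate is applied with $v$ itself in place of $v_2$.

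The only real obstacle is the \emph{sign} of the derivative. Differentiating $F_i$ in $v_i$ produces $-|y-(x+v)|^{-(s+1)} + (s+1)(y_i-(x+v)_i)^2|y-(x+v)|^{-(s+3)}$, whose sign is not definite on the cube. The key is to use the off-diagonal derivative $\partial_{v_j} F_i$ with $i \ne j$: the geometry of the configuration forces $y_i-(x+v)_i$ and $y_j-(x+v)_j$ to carry the same (negative) sign with magnitudes bounded below by $\ell/2$, so the integrand has a definite sign and a uniform lower bound, regardless of the (positive) distribution of $\nu$ inside $Q$.
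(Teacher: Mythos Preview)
Your proof is correct and follows essentially the same route as the paper: the upper bound is identical, and for the variance you exploit the same key observation that the \emph{off-diagonal} derivative $\partial_{v_2}$ of the first kernel component has a definite sign and magnitude $\ge c/\ell^{s+1}$ on the relevant box (the paper phrases the resulting variance estimate via the difference of conditional means on $\{v_2<0\}$ and $\{v_2>0\}$, while you use the iid-copy identity $\Var g(T)=\tfrac12\mathscr{E}(g(T_1)-g(T_2))^2$, but these are equivalent elementary devices). One cosmetic slip: your displayed formula for $\partial_{v_2}F_1$ should carry a $+(s+1)$ rather than $-(s+1)$, though this is immaterial since you only use $|\partial_{v_2}F_1|$.
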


\begin{proof}
Since for every $j=1,\dots,d$, and every $y\in Q$, we have
$$
(x+v-y)_j\ge\frac{5\ell}4-\frac{\ell}2-\frac{\ell}4\ge\frac{\ell}2,
$$
we conclude that
$$
|R_{\nu}^s(x+v)|\le\frac{2^s}{\ell^s}\|\nu\|=2^s\theta.
$$
Now we have to consider two cases.

{\bf Case 1:} $d=1$.

Then
$$\aligned
\D&=|\mathscr{E}\{R_{\nu}^s(x+v)|v<0\}-\mathscr{E}\{R_{\nu}^s(x+v)|v>0\}|\\
&=\frac4{\ell}\int_{[-\ell/4,0]}\bigg[R_{\nu}^s(x+v)-R_{\nu}^s\left(x+v+\frac{\ell}4\right)\bigg]\,dv\\
&=\frac4{\ell}\int_{[-\ell/4,0]}dv\int_Q\bigg[\frac1{(x+v-y)^s}-\frac1{(x+v+\frac14\ell-y)^s}\bigg]\,d\nu(y).
\endaligned$$
Note now that
$$
\frac{\partial}{\partial t}\,\frac1{t^s}=-\frac s{t^{s+1}}\le-\frac s{(\frac72\ell)^{s+1}},\
\text{ whenever }\ t\in\bigg[x+v-y,\,x+v+\frac{\ell}4-y\bigg]\subset\bigg[\frac{\ell}2,\,\frac{7\ell}2\bigg].
$$
Thus, the difference of the conditional expectations is at least
$$
s\,\bigg(\frac27\bigg)^{s+1}\,\frac14\,\frac{\|\nu\|}{\ell^s}=c\theta.
$$
Since $\Var\xi\ge\D^2/4$, we are done in this case.

{\bf Case 2:} $d\ge2$.

Let now
$$
\D=|\mathscr{E}\{[R_{\nu}^s(x+v)]_1|v_2<0\}-\mathscr{E}\{[R_{\nu}^s(x+v)]_1|v_2>0\}|.
$$
Since for $t\in[\frac12\ell,\,\frac72\ell]^d$, we have
$$
\frac{\partial}{\partial t_2}\,\frac{t_1}{|t|^{s+1}}=-(s+1)\frac {t_1t_2}{|t|^{s+3}}\le-\frac c{\ell^{s+1}}\,.
$$
Denote $\xi^{(1)} := (\xi,e_1)$. Then
the arguments analogous to that for the one-dimensional case allows us to conclude that
$$
\Var\xi\ge\Var\xi^{(1)}\ge\frac{\D^2}4\ge c\theta^2.
$$
\end{proof}

Fix $x\in\widetilde{E}$ and consider the random variable $\s=R_\nu^s(\Phi(x))$, where
$\Phi:\widetilde{E}\mapsto E$ is the (random) canonical measure preserving mapping between
$\widetilde{E}$ and $E$ defined above. Namely, $\Phi$ shifts each base cube of $\widetilde{E}$
onto the corresponding perturbed base cube of $E$.

Now fix the random shifts responsible for the positioning of the copies of the blocks $F_k$
$(k=0,\dots,m-1)$, that contain $\Phi(x)$.

Suppose now that some copy of the block $F_k$ contains $\Phi(x)$. Then fix all shifts responsible
for the positioning of the base cubes within the copies $F_k$ not containing $\Phi(x)$. That will
leave free $2^d-1$ shifts of the copies of the block $F_k$ at each of the levels $k=0,\dots,m-1$.
Out of these shifts, fix all except the one responsible for the positioning of the copy of $F_k$
``opposite'' in each coordinate to the copy containing $\Phi(x)$ in the copy of $E_k$ containing $\Phi(x)$.

Now notice that $R_\nu^s(\Phi(x))$ is the sum of a fixed part, that we shall call $a$, and $m$
independent random parts $\xi_0,\dots,\xi_{m-1}$, where $\xi_k$ is the potential of the part of the
measure $\mu$ supported by the copy of $F_k$ in the copy of $E_k$ containing $\Phi(x)$ that is
``opposite'' to the copy of $F_k$ containing $\Phi(x)$.

Applying Lemma \ref{le84} to $\xi_k$ with $\ell=\frac15\ell_{j_k}$ (in Lemma \ref{le84} we moved the
point and fixed the measure while here we are moving the measure and fixing the point, but for
convolution kernels it is the same; also the obvious change of directions of the coordinate axes should
be performed before we find ourselves in the conditions of Lemma \ref{le84}), we conclude that
$$
|\xi_k|\le C\,\frac{\eta2^{-j_kd}}{\ell_{j_k}^s},\ \text{ and }\
\Var\xi_k\ge C\bigg(\frac{\eta2^{-j_kd}}{\ell_{j_k}^s}\bigg)^2,\ \ k=0,1,\dots,m-1.
$$
Hence, Lemma \ref{le83} is applicable and we get
$$
\mathscr{P}\bigg\{|R_\nu^s(\Phi(x))|\ge\d\bigg(\sum_{k=0}^{m-1}\theta_k^2\bigg)^{1/2}\bigg\}\ge\d
\ \text{ with }\ \theta_k=\frac{\eta2^{-j_kd}}{\ell_{j_k}^s}.
$$
This is a conditional probability,  of course, conditioned on freezing a lot of shifts. But, since $\d$
and $\theta_k$ do not depend on the values of the frozen shifts, we get the same inequality for the full
probability.

The immediate conclusion is that there exists a set of shifts such that
$$
\mu(G)\ge\d, \ \text{ where }\
G=\bigg\{y\in E:|R_\nu^s(y)|\ge\d\bigg(\sum_{k=0}^{m-1}\theta_k^2\bigg)^{1/2}\bigg\},
$$
and, by Corollary \ref{co82}, $M_h(G)\ge c\mathbf M$.

Now note that
$$\aligned
\int_{C/N}^1\bigg[\frac{t}{h^{-1}(\mathbf{M}t)^s}\bigg]^2\frac{dt}{t}
&\le\sum_{j=0}^{n-2}\int_{2^{-(j+1)d}}^{2^{-jd}}\bigg[\frac{t}{h^{-1}(\mathbf{M}t)^s}\bigg]^2\frac{dt}{t}\\
&\le C\sum_{j=0}^{n-2}\bigg[\frac{2^{-jd}}{h^{-1}(\mathbf{M}\,2^{-(j+1)d})^s}\bigg]^2
=C\sum_{j=1}^{n-1}\bigg[\frac{2^{-jd}}{\ell_j^s}\bigg]^2\\
&\le C\sum_{k=0}^{m-1}\sum_{j_k\le j<j_{k+1}}\bigg[\frac{2^{-jd}}{\ell_j^s}\bigg]^2
\le C\sum_{k=0}^{m-1}\bigg[\frac{2^{-j_kd}}{\ell_{j_k}^s}\bigg]^2
\sum_{j\ge j_k}2^{-2(j-j_k)(d-s)}\\
&\le C\sum_{k=0}^{m-1}\bigg[\frac{2^{-j_kd}}{\ell_{j_k}^s}\bigg]^2=C\eta^{-2}\sum_{k=0}^{m-1}\theta_k^2.
\endaligned$$
Thus, we constructed a linear combination of $N$ Dirac point masses $\nu$ such that $\|\nu\|=\eta$ and
$$
M_h\bigg(\ZZ\bigg(\nu,c\eta\bigg\{\int_{C/N}^1\bigg[\frac{t}{h^{-1}(\mathbf{M}t)^s}\bigg]^2\frac{dt}{t}
\bigg\}^{1/2}\bigg)\bigg)\ge c\mathbf{M}.
$$
Now, given $P>0$, put
$$
\mathbf M=\mathfrak{M}_h\bigg(\frac{c\eta}P,\frac NC\bigg).
$$
Then
$$
c\eta\bigg\{\int_{C/N}^1\bigg[\frac{t}{h^{-1}(\mathbf{M}t)^s}\bigg]^2\frac{dt}{t}
\bigg\}^{1/2}=P.
$$
It remains to note that
$$
\mathfrak{M}_h\bigg(\frac{c\eta}P,\frac NC\bigg)\ge c'\mathfrak{M}_h\bigg(\frac{\eta}P,N\bigg).
$$
This proves the theorem for $N\ge2C$.

For $2\le N<2C$, just put all $N$ masses to one point. Then
$$
M_h(\ZZ(\nu,P))=h\bigg(\bigg(\frac{\eta}P\bigg)^{1/s}\bigg).
$$
Note now that
$$
\k^2\int_{1/2}^1\bigg[\frac{t}{h^{-1}(2h(\k^{1/s})t)^s}\bigg]^2\frac{dt}{t}
\le\k^2\int_{1/2}^1\bigg[\frac{t}{\k}\bigg]^2\frac{dt}{t}<1,
$$
so $h(\k^{1/s})\ge\frac12\mathfrak{M}_h(\k,2)$. Thus,
$$
h\bigg(\bigg(\frac{\eta}P\bigg)^{1/s}\bigg)\ge\frac12\mathfrak{M}_h\bigg(\frac{\eta}P,2\bigg)
\ge c\,\mathfrak{M}_h\bigg(\frac{\eta}P,2C\bigg)\ge c\,\mathfrak{M}_h\bigg(\frac{\eta}P,N\bigg).
$$

\section{Proof of Theorem \ref{th43}}

At the beginning we consider a measure $\nu$ with compact support.
Without loss of generality we assume that
\begin{equation*}
\mathbf M:=M_h(\ZZ^\ast(\nu,P))>\frac{C_1\|\nu\|}P\rho_0,\ \text{ where }\
\rho_0:=\max_{0<t\le t_2}\frac{h(t)}{t^s},\ \ h(t_2)=\mathbf M.
\end{equation*}
Otherwise the same arguments as in Section 7 (with sufficiently big $N$) yield (\ref{f46}). In the same way
as in the proof of Lemma \ref{le62} we define the set $\ZZ_1$ (we use the notation in Lemma \ref{le62}),
taking $\rho_0$ instead of $\rho$. Repeating the arguments in the proof of Lemma \ref{le62} and choosing
$C_2>10$, we see that
\begin{gather}\label{f91}
M_h(\ZZ_1)<0.3\mathbf M,\\
|\nu|(B(x,r))<0.1Pr^s \quad\text{ for any } x\in\R^d\setminus\ZZ_1\text{ and } r>0.
\end{gather}
For fixed $\d>0$ we set
\begin{equation*}
\ZZ_\d^\ast(\nu,P)=\{x\in\R^d:\sup_{\e>\d}|R_{\nu,\e}^s(x)|>P\}.
\end{equation*}
There exists a measure $\nu'$ consisting of finitely many point charges such that
\begin{equation}\label{f93}
\|\nu'\|=\|\nu\|,\quad |R_{\nu,\e}^s(x)-R_{\nu',\e}^s(x)|<0.2P
\quad\text{ for any } x\in\R^d\setminus\ZZ_1\text{ and } \e\ge\d.
\end{equation}
For the construction of $\nu'$ we cover $\R^d$ by a sufficiently fine net of cubes and put at the center of
each cube a charge equal to the $\nu$-measure of this cube (the parts of the measure concentrated on sides
of adjacent cubes can be ascribed to any of them). The difference of the integrals over cubes lying in the
domain $\{y\in\R^d:|y-x|>\e\ge\d\}$ can be made arbitrarily small, while the difference of the integrals over
those pieces of cubes intersecting the sphere $|y-x|=\e$ which lie in this domain, can be made not exceeding
$0.15P$ (we use here (9.2)). For $\d=1/n$ we denote this measure $\nu'$ by $\nu_n$. By (\ref{f93}),
\begin{equation*}
\ZZ_{1/n}^\ast(\nu,P)\setminus\ZZ_1\subset\ZZ_{1/n}^\ast(\nu_n,0.8P)\setminus\ZZ_1\subset\ZZ^\ast(\nu_n,0.8P),
\ \ n\in\mathbb{Z}_+.
\end{equation*}
Obviously,
$$
\ZZ_{1/n}^\ast(\nu,P)\setminus\ZZ_1\nearrow\ZZ^\ast(\nu,P)\setminus\ZZ_1.
$$
Then
$$
c(d)M_h(\ZZ^\ast(\nu,P)\setminus\ZZ_1)\le\lim_{n\to\infty}M_h(\ZZ_{1/n}^\ast(\nu,P)\setminus\ZZ_1).
$$
This inequality follows from arguments given by Carleson in \cite{Ca}, p.~9--11 (see also \cite{E2}, Lemma~7.1).
By (\ref{f91}), $0.7\mathbf M\le M_h(\ZZ^\ast(\nu,P)\setminus\ZZ_1)$. Applying Theorem \ref{th41} for $\nu=\nu_n$
and Lemma \ref{le31}, for sufficiently big $n$ we get
$$
\mathbf M\le CM_h(\ZZ^\ast(\nu_n,0.8P))\le C'\,\Mh\bigg(\frac{\|\nu\|}{P},\infty\bigg),
$$
that is (\ref{f46}).

Suppose that $\supp\nu$ is not bounded.

Fix $R>0$ and take $R_1>R$. Let $\nu_0$ be a measure such that $\nu=\nu_0$ in $B(0,R_1)$, and
$\nu_0=0$ outside of $B(0,R_1)$. For sufficiently big $R_1$,
$$
\{\ZZ^\ast(\nu,P)\cap B(0,R)\}\subset\{\ZZ^\ast(\nu_0,0.9P)\cap B(0,R)\}\subset\ZZ^\ast(\nu_0,0.9P).
$$
Hence,
\begin{equation}\label{f94}
M_h(\ZZ^\ast(\nu,P)\cap B(0,R))\le C(d,s)\,\Mh\bigg(\frac{\|\nu_0\|}{0.9P},\infty\bigg)\le
C'(d,s)\,\Mh\bigg(\frac{\|\nu\|}{P},\infty\bigg)\ \text{ for all }R>0
\end{equation}
(it is clear that Lemma \ref{le31} is correct for $N=\infty$ as well). We will prove that this inequality
implies (\ref{f46}).

Choose $R>0$ such that $h(\frac12R)-1$ is greater then the right hand side of (\ref{f94}), and set
$$
G_1=\ZZ^\ast(\nu,P)\cap B(0,R),\quad G_k=\ZZ^\ast(\nu,P)\cap\{y:(k-1)R\le|y|<kR\}.
$$
Fix $\e\in(0,1)$ and $K\in\mathbb N$, and consider a covering of $\ZZ^\ast(\nu,P)\cap B(0,KR)$ by balls
$B_j=B(x_j,r_j)$ such that
$$
\sum_jh(r_j)<M_h(\ZZ^\ast(\nu,P)\cap B(0,KR))+\e\le C'(d,s)\,\Mh\bigg(\frac{\|\nu\|}{P},\infty\bigg)+\e.
$$
Every ball $B_j$ intersects at most two sets $G_k$, $k=1,\dots,K$. Hence,
$$
2C'(d,s)\,\Mh\bigg(\frac{\|\nu\|}{P},\infty\bigg)+2\e>2\sum_jh(r_j)\ge
\sum_{k=1}^K\sum_{j:B_j\cap G_k\ne\varnothing}h(r_j)\ge\sum_{k=1}^K M_h(G_k),
$$
and we get the estimate
$$
M_h(\ZZ^\ast(\nu,P))\le\sum_{k=1}^\infty M_h(G_k)\le2C'(d,s)\,\Mh\bigg(\frac{\|\nu\|}{P},\infty\bigg).
$$
{}\hfill $\square$

\section{The case $s\ge d$: proof of Theorem \ref{th44}}

{\bf Case $s>d$.} Let $K(t),\ t\in(0,+\infty)$, be a non-increasing continuous function such that
$K(t)\to+\infty$ as $t\to+0$. By \cite{E2}, Theorem~1.1,
$$
M_h\left(\left\{x\in\R^d: \int K(|x-y|)\,d|\nu|(y)>P\right\}\right)<Ch(r_0),\quad C=C(d),
$$
where $r_0$ is a solution of the equation
$$
h(r_0)=\frac{\|\nu\|}{P}\bigg(h(r'_0)K(r'_0)+\int_{r'_0}^{r_0}K(t)\,dh(t)\bigg)
\quad\text{with }\ h(r'_0)=\frac{h(r_0)}{N}.
$$
We apply this result for $K(t)=t^{-s},\ s>d$. Integrating by parts, we see that the right hand side does
not exceed
$$
\frac{\|\nu\|}{P}\bigg(\frac{h(r_0)}{r_0^s}+s\int_{r'_0}^{r_0}\frac{h(t)}{t^{s+1}}\,dt\bigg).
$$
Since $h(t)\le h(r'_0)(r'_0)^{-d}t^d$ for $t\ge r'_0$, we have
$$
h(r_0)\le\frac{\|\nu\|}{P}\bigg(\frac{h(r_0)}{r_0^s}+\frac{sh(r'_0)}{(r'_0)^d}\,\frac{(r'_0)^{d-s}}{s-d}\bigg)
\le C(s,d)\frac{\|\nu\|}{P}\,\frac{h(r'_0)}{(r'_0)^s}.
$$
Hence,
$$
r'_0\le\bigg(C(s,d)\frac{\|\nu\|}{NP}\bigg)^{1/s}.
$$
Therefore,
$$\aligned
M_h(\ZZ^\ast(\nu,P))&\le M_h(\XX(|\nu|,P))\le Ch(r_0)=CNh(r'_0)\\
&<C'Nh\bigg(\bigg(\frac{\|\nu\|}{NP}\bigg)^{1/s}\bigg),\quad C'=C'(s,d)
\endaligned$$
(we used the inequality $h(2t)\le2^dh(t)$).

\medskip

{\bf Case $s=d$.} Without loss of generality we assume that
$$
h(t_2)=\mathbf M:=M_h(\ZZ^\ast(\nu,P))>C_1Nh\bigg(\bigg(\frac{\|\nu\|}{NP}\bigg)^{1/d}\bigg)
$$
with sufficiently big $C_1$. Set
$$
t_1=\bigg(\frac{\|\nu\|}{NP}\bigg)^{1/d},\quad \rho=\frac{h(t_1)}{t_1}.
$$
As in the proof of Lemma 6.2, we construct the sets $\ZZ_1$, $\ZZ_2$ and $F$ such that
\begin{gather*}
F=\ZZ^\ast(\nu,P)\setminus(\ZZ_1\cup\ZZ_2),\quad M_h(F)\ge0.6\mathbf M,\\
B(x,0.4t_1)\subset\ZZ^\ast(\nu,0.8P)\ \text{ for every }\ x\in F.
\end{gather*}
Let $\mu$ be the $d$-dimensional Lebesgue measure, and let
$$
G=\bigcup_{x\in F}B(x,0.4t_1).
$$
Obviously, $G\subset\ZZ^\ast(\nu,0.8P)$. By the classical Calder\'on-Zygmund result,
$$
\frac{C\|\nu\|}{P}>\mu(\ZZ^\ast(\nu,0.8P))\ge\mu(G),\quad C=C(d).
$$
The set $G$ can be covered by balls $B_j$ with the same radii $t_1$ in such a way that
$$
\mu(G)\ge c\sum_j t_1^d,\quad c=c(d).
$$
Obviously,
$$\aligned
\mu(G)&\ge c\sum_j t_1^d=\frac{c\,t_1^d}{h(t_1)}\sum_j h(t_1)\ge\frac{c\,t_1^d}{h(t_1)}M_h(G)\\
&\ge\frac{c\,t_1^d}{h(t_1)}M_h(F)\ge\frac{c't_1^d}{h(t_1)}\mathbf M,\quad c'=c'(d).
\endaligned$$
Thus,
$$
\mathbf M\le\frac{C\|\nu\|}{P}\,\frac{h(t_1)}{t_1^d}=\frac{C\|\nu\|}{P}\,\frac{NP}{\|\nu\|}h(t_1)
=CNh(t_1).
$$
Theorem \ref{th44} is proved.\hfill$\square$

\section{Hausdorff content and capacity}

The main object of this section is the capacity $\g_{s,+}(E)$ of a compact set $E\subset\R^d$ defined by the equality
$$
\g_{s,+}(E):=\sup\{\|\mu\|:\mu\in M_+(E),\ \|R_{\mu}^s(x)\|_\infty\le1\},
$$
where $M_+(E)$ is the class of positive Radon measures supported by $E$.

\vspace{.09in}

{\bf Remark.} In \cite{V}, p.~46, the capacity $\g_{s,+}(E)$ is defined in the following way:
$$
\g_{s,+}(E):=\sup\{\|\mu\|:\mu\in \Sigma_s,\ \supp\mu\subset E,\ \|R_{\mu}^s(x)\|_\infty\le1\}.
$$
It is shown in \cite{MPV}, p.~217, that if $\|R_{\mu}^s(x)\|_\infty\le1$, then
$$
\mu(B(x,r))\le Cr^s,\quad x\in \R^d,\ r>0
$$
for every measure $\mu\in M_+(E)$. Arguments in this part of the proof of Lemma~4.1 in \cite{MPV} valid not only for $0<s<1$, but for $0<s<d$ as well.
(We note that the reference [P], Lemma~11 in \cite{MPV} should be replaced by [P], Lemma~3.1.) For $s=d-1$, this fact is also noted in \cite{V}, p.~46.
Therefore, these two definitions of $\g_{s,+}$ are equivalent.

\vspace{.09in}

This capacity is connected with various problems in analysis.

For $d=2,\ s=1$,
$$
\g_{1,+}(E)\asymp\g(E),
$$
where $\g(E)$ is analytic capacity (see \cite{To} and \cite{V}). Here $A\asymp B$ means that $C^{-1}A\le B\le CA$ with $C$ depending (possibly)
only on $d$ and $s$.

For $s=d-1,\ d\ge2$,
\begin{equation}\label{f111}
\g_{s,+}(E)\asymp\kappa(E),
\end{equation}
where $\kappa(E)$ is the Lipschitz harmonic capacity
$$
\kappa(E):=\sup\{|\langle\D f,1\rangle|: f\in\text{Lip}_{\text{loc}}^1(\R^d),\
\supp(\D f)\subset E,\ \|\nabla f\|_\infty\le1,\ \nabla f(\infty)=0\},
$$
introduced by Paramonov \cite{Pa} in connection with problems of approximation by harmonic functions. Here (as usual)
$\langle T,\f\rangle$ means the action of a distribution $T$ with compact support on a smooth test function. It was noticed
in \cite{Pa} that $\kappa(E)\le2\pi\g(E)$ for $d=2$. The
relation (\ref{f111}) was proved in \cite{To} for $d=2$ and in \cite{V} for $d>2$ (see \cite{V}, Theorem~2.1 and Lemma~5.15). The null-sets for the capacity $\kappa$ are the same as the
removable sets for Lipschitz harmonic functions, see \cite{Pa}, \cite{MP}. In these papers Mattila and Paramonov established important geometrical
properties of the capacity $\kappa$.

Moreover, $\g_{s,+}(E)$ is related to the Riesz capacity $C_{\a,p}$ in non-linear potential theory.
We discuss this relation near the end of the paper.

Main results of this section concern the connections between Hausdorff content and the capacity $\g_{s,+}$.
We need the following important characterization of $\g_{s,+}$ obtained in \cite{V}, Chapter~5:
\begin{equation}\label{f113}
\g_{s,+}(E)\asymp\g_{op}(E):=\sup\{\|\mu\|:\mu\in \Sigma_s,\ \supp\mu\subset E,\
\pmb|\mathfrak{R}_{\mu}^s\pmb|\le1\},\quad 0<s<d.
\end{equation}

\begin{proof}[Proof of Theorem \ref{th47}]
Choose $\mu\in M_+(E)$. We may assume that $\int_{\R^d}W^\mu(x)\,d\mu(x)<\infty$. Set
$$
G:=\bigg\{x\in\R^d:W^\mu(x)>\frac2{\|\mu\|}\int_{\R^d}W^\mu(x)\,d\mu(x)\bigg\}.
$$
It is easy to see that $G$ is open and
$$
\mu(G)\le\frac12\|\mu\|.
$$
Let
$$
\mu^\ast=\mu|(\R^d\setminus G), \quad \mathbf S=\sup_{x\in\supp\mu^\ast}W^{\mu^\ast}(x).
$$
We claim that
\begin{equation}\label{f117}
W^{\mu^\ast}(x)\le2^{2s+1}\mathbf S\quad \text{for all }x\in\R^d.
\end{equation}
It is enough to consider $x$ with $\d:=\text{dist}(x,\supp\mu^\ast)>0$. Let $x'$ be such that $x'\in\supp\mu^\ast$ and $|x-x'|=\d$. Then
$$\aligned
W^{\mu^\ast}(x)&=\int_\d^\infty\bigg[\frac{\mu^\ast(B(x,r))}{r^s}\bigg]^2\,\frac{dr}r\le
\int_\d^\infty\bigg[\frac{\mu^\ast(B(x',r+\d))}{r^s}\bigg]^2\,\frac{dr}r\\
&=\int_{2\d}^\infty\bigg[\frac{\mu^\ast(B(x',t))}{(t-\d)^s}\bigg]^2\,\frac{dt}{t-\d}
<2^{2s+1}\int_{2\d}^\infty\bigg[\frac{\mu^\ast(B(x',t))}{t^s}\bigg]^2\,\frac{dt}t\le2^{2s+1}\mathbf S,
\endaligned$$
and we get (\ref{f117}).

Let $\eta=(2^{2s+2}s\mathbf S)^{-1/2}\mu^\ast$. Since for each ball $B(x,r)$
$$
2^{2s+1}\mathbf S\ge\int_0^\infty\bigg[\frac{\mu^\ast(B(x,t))}{t^s}\bigg]^2\,\frac{dt}t\ge
\int_r^\infty\bigg[\frac{\mu^\ast(B(x,t))}{t^s}\bigg]^2\,\frac{dt}t\ge
\frac{[\mu^\ast(B(x,r))]^{2}}{2sr^{2s}},
$$
we see that $\eta\in\Sigma_s$. Moreover, (\ref{f411}) implies
$$
\pmb|\mathfrak{R}_{\eta}^s\pmb|^2\le C(2^{2s+2}s\mathbf S)^{-1}\mathbf S=C'.
$$
Relations (\ref{f113}) and $\|\mu^\ast\|\ge\frac12\|\mu\|$ yield
$$
\g_{s,+}(E)\ge C\eta(E)\ge C'\|\mu\|\mathbf S^{-1/2}.
$$
Since
$$
\mathbf S\le\sup_{x\in\supp\mu^\ast}W^{\mu}(x)\le\frac2{\|\mu\|}\int_{\R^d}W^\mu(x)\,d\mu(x),
$$
we have
$$
\g_{s,+}(E)\ge C\|\mu\|^{3/2}\bigg[\int_{\R^d}W^\mu(x)\,d\mu(x)\bigg]^{-1/2},
$$
that implies (\ref{f411'}).
\end{proof}

\begin{theorem}\label{th111}
Under assumption (\ref{f45}), for each compact set $E\subset\R^d$,
\begin{equation}\label{f114}
\g_{s,+}(E)\ge CM_h(E)\bigg[\int_0^{t_2}\bigg(\frac{h(t)}{t^{s}}\bigg)^2\frac{dt}t\bigg]^{-1/2},\quad 0<s<d,
\end{equation}
where $C$ depends only on $d$, $s$, and $t_2$ is defined by the equality $h(t_2)=M_h(E)$.
\end{theorem}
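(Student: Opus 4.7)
The plan is to combine Theorem \ref{th47} with a Frostman-type measure. By the standard Frostman lemma (as used already in the proof of Lemma \ref{le62}) we obtain a positive Radon measure $\mu$ supported by $E$ such that $\mu(B(x,r))\le h(r)$ for all balls $B(x,r)\subset\R^d$, and $a_d M_h(E)\le\|\mu\|\le M_h(E)$; the upper bound is arranged, if necessary, by multiplying $\mu$ by a constant $\le1$.

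The core step is the pointwise estimate
$$
W^\mu(x)=\int_0^\infty\bigg[\frac{\mu(B(x,r))}{r^s}\bigg]^2\frac{dr}{r}\le C(d,s)\int_0^{t_2}\bigg[\frac{h(t)}{t^s}\bigg]^2\frac{dt}{t}, \qquad x\in\R^d.
$$
For $0<r\le t_2$ I use the Frostman bound $\mu(B(x,r))\le h(r)$ directly. For $r>t_2$ I use $\mu(B(x,r))\le\|\mu\|\le h(t_2)$, so the tail contributes at most $\tfrac{1}{2s}\bigl[h(t_2)/t_2^s\bigr]^2$. Finally, the monotonicity of $h(t)/t^d$ yields $h(t)\ge 2^{-d}h(t_2)$ on $[t_2/2,t_2]$, hence $h(t)/t^s\ge c_d\,h(t_2)/t_2^s$ there, and therefore
$$
\bigg[\frac{h(t_2)}{t_2^s}\bigg]^2\le C(d,s)\int_{t_2/2}^{t_2}\bigg[\frac{h(t)}{t^s}\bigg]^2\frac{dt}{t},
$$
so the tail is absorbed into the integral on $[0,t_2]$. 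This gives the desired pointwise bound on $W^\mu$; note that assumption (\ref{f45}) ensures finiteness.

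Integrating against $\mu$ produces $\int_{\R^d}W^\mu(x)\,d\mu(x)\le C\|\mu\|\int_0^{t_2}[h(t)/t^s]^2\,dt/t$. Plugging into the lower bound from Theorem \ref{th47},
$$
\g_{s,+}(E)\ge c\,\|\mu\|^{3/2}\bigg[\int_{\R^d}W^\mu\,d\mu\bigg]^{-1/2}\ge c'\,\frac{\|\mu\|^{3/2}}{\|\mu\|^{1/2}}\bigg[\int_0^{t_2}\bigg(\frac{h(t)}{t^s}\bigg)^2\frac{dt}{t}\bigg]^{-1/2}\ge c''\,M_h(E)\bigg[\int_0^{t_2}\bigg(\frac{h(t)}{t^s}\bigg)^2\frac{dt}{t}\bigg]^{-1/2},
$$
which is (\ref{f114}). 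The only real obstacle is the tail estimate for $r>t_2$; everything else is bookkeeping, and the tail is handled cleanly by the $h(t)/t^d$ monotonicity built into the definition of a gauge function.
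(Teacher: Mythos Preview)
Your proof is correct and follows essentially the same route as the paper: Frostman measure, split $W^\mu$ at $t_2$, absorb the tail $[h(t_2)/t_2^s]^2$ into $\int_0^{t_2}[h(t)/t^s]^2\,dt/t$ via the monotonicity of $h(t)/t^d$ (what the paper calls the ``doubling property of $h$''), and then invoke Theorem \ref{th47}. The only cosmetic difference is that you make the doubling step explicit on $[t_2/2,t_2]$, whereas the paper just cites it.
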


\begin{proof}
By Frostman's theorem (see \cite{Ca}, p.~7) there is a positive measure $\mu$ such that

$\supp\mu\subset E,$

$\mu(B(x,r))\le h(r)$ for each ball $B(x,r)\subset\R^d,$

$\mu(E)\ge CM_h(E)$ with $C$ depending only on $d$.\\
Without loss of generality we can assume that $\|\mu\|\le M_h(E)$ (otherwise we divide $\mu$ by the constant
$\|\mu\|/M_h(E)>1$). Then
$$
W^\mu(x)\le\int_0^{t_2}\bigg[\frac{h(t)}{t^{s}}\bigg]^2\frac{dt}t+
\int_{t_2}^\infty\bigg[\frac{h(t_2)}{t^{s}}\bigg]^2\frac{dt}t\le
C\int_0^{t_2}\bigg[\frac{h(t)}{t^{s}}\bigg]^2\frac{dt}t,
$$
since by the doubling property of $h$,
$$
\bigg[\frac{h(t_2)}{t_2^{s}}\bigg]^2\le C\int_0^{t_2}\bigg[\frac{h(t)}{t^{s}}\bigg]^2\frac{dt}t.
$$
Inequality (\ref{f114}) follows directly from (\ref{f411'}).
\end{proof}

For $h(t)=t^\beta$ easy calculations give the following result.

\begin{corollary}\label{cor112}
For each compact set $E\subset\R^d$,
$$
\g_{s,+}(E)\ge C(\beta-s)^{1/2}[M_h(E)]^{s/\beta},\quad\text{where}\quad 0<s<d,\quad h(t)=t^\beta,\quad\beta>s,
$$
and $C$ depends only on $d$ and $s$.
\end{corollary}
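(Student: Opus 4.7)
The plan is to apply Theorem \ref{th111} directly with the explicit gauge function $h(t) = t^\beta$, and to compute the resulting integral.

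First I would verify that the hypothesis $(\ref{f45})$ is satisfied: for $h(t)=t^\beta$,
$$
\int_0 \bigg[\frac{h(t)}{t^s}\bigg]^2 \frac{dt}{t} = \int_0 t^{2(\beta-s)-1}\, dt,
$$
which converges at the origin precisely because $\beta > s$. Thus Theorem \ref{th111} is applicable.

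Next I would compute the two explicit quantities appearing on the right-hand side of $(\ref{f114})$. The defining equation $h(t_2) = M_h(E)$ gives $t_2 = [M_h(E)]^{1/\beta}$. The integral becomes
$$
\int_0^{t_2}\bigg(\frac{h(t)}{t^s}\bigg)^2\frac{dt}{t}
= \int_0^{t_2} t^{2(\beta-s)-1}\, dt
= \frac{t_2^{2(\beta-s)}}{2(\beta-s)}
= \frac{[M_h(E)]^{2(\beta-s)/\beta}}{2(\beta-s)}.
$$
Taking the reciprocal square root yields $\sqrt{2(\beta-s)}\,[M_h(E)]^{-(\beta-s)/\beta}$.

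Substituting into $(\ref{f114})$ gives
$$
\g_{s,+}(E) \ge C\,M_h(E)\cdot \sqrt{2(\beta-s)}\,[M_h(E)]^{-(\beta-s)/\beta}
= C\sqrt{2}\,(\beta-s)^{1/2}\,[M_h(E)]^{s/\beta},
$$
which is the claimed estimate. There is no real obstacle here: once Theorem \ref{th111} is in hand, the corollary is a one-line substitution plus an elementary integral, and the factor $(\beta-s)^{1/2}$ emerges directly from the explicit evaluation of the logarithmic integral against the power gauge.
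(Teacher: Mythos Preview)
Your proof is correct and is exactly the approach the paper intends: the paper states only that ``easy calculations give the following result'' after Theorem~\ref{th111}, and your substitution and evaluation of the integral is precisely that calculation.
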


The next statement can be viewed as a counterpart of the classical Frostman's theorem on connections between capacities generated by potentials with
positive kernels and Hausdorff measure $\L_h(E)$ (see, for example, \cite{Ca}, Section IV, Theorem~1).

\begin{corollary}\label{cor113}
For each compact set $E\subset\R^d$, the condition $\g_{s,+}(E)>0$ implies $\L_h(E)>0$ for $h(t)=t^s$.

On the other hand, if $\L_h(E)>0$ for a measuring function $h$ satisfying (\ref{f45}), then $\g_{s,+}(E)>0$.
\end{corollary}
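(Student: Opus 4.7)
My plan is to handle the two implications separately, both by short direct arguments that combine the existing remark in Section 11 with Theorem \ref{th111}.

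\textbf{First implication.} Suppose $\g_{s,+}(E)>0$. I would pick a measure $\mu\in M_+(E)$ with $\|\mu\|\ge\tfrac12\g_{s,+}(E)>0$ and $\|R^s_\mu\|_\infty\le1$. The remark at the beginning of Section 11 (quoting \cite{MPV}, valid for $0<s<d$) then gives a constant $C=C(d,s)$ with
$$
\mu(B(x,r))\le Cr^s,\qquad x\in\R^d,\ r>0.
$$
Given any countable cover $\{B(x_j,r_j)\}$ of $E$, I would estimate
$$
0<\|\mu\|=\mu(E)\le\sum_j\mu(B(x_j,r_j))\le C\sum_jr_j^s=C\sum_jh(r_j),
$$
so that $M_h(E)\ge \|\mu\|/C>0$ for $h(t)=t^s$. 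Since the Hausdorff content is always dominated by the Hausdorff measure, $\Lambda_h(E)\ge M_h(E)>0$, which finishes this direction.

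\textbf{Second implication.} Assume $\Lambda_h(E)>0$ for a measuring function $h$ satisfying (\ref{f45}). The key observation is that this forces $M_h(E)>0$ as well. Indeed, if $M_h(E)=0$, then for every $\e>0$ there is a cover $\{B(x_j,r_j)\}$ of $E$ with $\sum_jh(r_j)<\e$; in particular each $h(r_j)<\e$, so $r_j<h^{-1}(\e)$. As $\e\to0$ the admissible radii shrink to $0$, so $\Lambda_h^\delta(E)=0$ for every $\delta>0$, contradicting $\Lambda_h(E)>0$. Once $M_h(E)>0$ is in hand, I would apply Theorem \ref{th111} directly: with $t_2$ defined by $h(t_2)=M_h(E)$, the integral $\int_0^{t_2}[h(t)/t^s]^2\,dt/t$ is finite (the near-zero integrability comes from (\ref{f45}) and the integrand is continuous on $(0,t_2]$), hence
$$
\g_{s,+}(E)\ge C\,M_h(E)\Bigl[\int_0^{t_2}\bigl(h(t)/t^s\bigr)^2\,\tfrac{dt}{t}\Bigr]^{-1/2}>0,
$$
which is what we wanted.

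\textbf{Obstacles.} There is no real hard step here; both implications are routine consequences of material already in the paper. The only subtlety worth pausing on is the passage $\Lambda_h(E)>0\Longrightarrow M_h(E)>0$, which relies on the strict monotonicity of $h$ and $h(0)=0$ built into the definition of a measuring function, so that small $h$-mass forces small radii; this is precisely why the statement of the corollary is clean with respect to (\ref{f45}) rather than needing a second condition.
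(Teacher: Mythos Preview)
Your argument is correct. The second implication is handled exactly as in the paper: both of you observe that $\Lambda_h(E)>0$ forces $M_h(E)>0$ and then apply Theorem \ref{th111} (the inequality (\ref{f114})). For the first implication, however, you take a slightly different and more self-contained route. The paper quotes Prat's inequality $\g_s(E)\le C\,M_{t^s}(E)$ from \cite{Pr} together with the trivial bound $\g_{s,+}\le\g_s$, whereas you bypass $\g_s$ entirely: you pick a near-extremal measure for $\g_{s,+}$, use the growth bound $\mu(B(x,r))\le Cr^s$ already recorded in the Remark at the start of Section~11, and deduce $M_{t^s}(E)\ge c\,\|\mu\|>0$ directly from the definition of Hausdorff content. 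Your approach has the advantage of staying inside the paper's own toolbox and not invoking an external black box; the paper's approach has the advantage of pointing to a sharper statement (Prat actually shows $\Lambda_{t^s}(E)=\infty$ when $0<s<1$).
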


\begin{proof}
The first part of Corollary \ref{cor113} is a direct consequence of the following result by Prat \cite{Pr}, p.~946: for $0<s<d$
$$
C_\e[M_{t^{s+\e}}(E)]^{s/(s+\e)}\le\g_s(E)\le CM_{t^s}(E)
$$
(we need the second inequality). Indeed, by definition $\g_{s,+}(E)\le\g_s(E)$, and $M_h(E)$, $\L_h(E)$ vanish simultaneously. (We remark that for $0<s<1$, Prat \cite{Pr} has obtained the following essentially
stronger result: if $\g_s(E)>0$ then $\L_h(E)=\infty$.)

The second part is an immediate consequence of (\ref{f114}).
\end{proof}

Obviously, there is a gap between the assumptions about $h$ in the first and the second parts of Corollary \ref{cor113}. We claim that this gap cannot be
reduced, that is, both parts are sharp. Concerning the first part it means that if $\liminf_{t\to0}h(t)t^{-s}=0$, then there is a compact set $E$ for
which $\g_{s,+}(E)>0$ but $\L_h(E)=0$. This assertion follows from the more general and strong result
\cite{Ca}, p.~34, Theorem~4: for any positive decreasing
kernel $K(r)$ and any measuring function $h(r)$ such that
$$
\liminf_{r\to0}h(r)\overline{K}(r)=0,
$$
there is a Cantor type set $E$ with $C_K(E)>0$ and $\L_h(E)=0$. Here
\begin{gather*}
\overline{K}(r)=\frac1{r^d}\int_0^r K(t)t^{d-1}\,dt,\\
C_K(E):=\sup\bigg\{\|\mu\|:\mu\in M_+(E),\ \int_{\R^d}K(|x-y|)\,d\mu(y)\le1\text{ on }E\bigg\}.
\end{gather*}
For $K(r)=r^{-s}$ we have  $\overline{K}(r)=\frac1{d-s}r^{-s}$. By the maximum principle, $\g_{s,+}(E)\ge C\cdot C_K(E)$,
and we get the needed assertion.

The second part of Corollary \ref{cor113} is also precise: if the integral in (\ref{f45}) is divergent, then there exists a compact set $E$ for which $\L_h(E)>0$ but $\g_{s,+}(E)=0$. The industrious reader can obtain
this claim from Section~8. The reader who does not care about conditions of regularity of $h$,
can derive this statement under the additional condition $\frac{h(t)}{t^s}\nearrow$, using the estimate for
the capacity $\g_s$ of Cantor sets given at the end of \cite{MT}.

The results of this section mentioned above generalize the corresponding results in \cite{E2}, Section~12.

\medskip

\noindent{\bf Remark.} In Carleson's book \cite{Ca} (see also \cite{E2}, Sections 1,\,2 and the references therein)
another condition similar to \eqref{f45} plays an important part, namely
\begin{equation}\label{f47}
\int_0\frac{h(t)}{t^{s}}\,\frac{dt}t<\infty.
\end{equation}

It is interesting that the difference between these two conditions is explained by the fact that we are dealing here with
capacities with {\it signed} kernels, and \eqref{f47} is pertinent to the classical capacities with the {\it positive}
kernel $K(|x|)=|x|^{-s}$.

\medskip

In conclusion we consider the relation between $\g_{s,+}(E)$ and the Riesz capacity $C_{\a,p}$ in non-linear potential
theory. One of a number of equivalent definitions is the following equality (see \cite{AH}, p.~34, Theorem~2.5.1):
$$
C_{\a,p}(E)=\sup_{\mu\in M_+(E)}\bigg(\frac{\mu(E)}{\|I_\a\ast\mu\|_{p'}}\bigg)^p,\quad I_\a(x)=\frac{A_{d,\a}}{|x|^{d-\a}},\quad \frac1{p'}+\frac1p=1,
$$
where $1<p<\infty$, $0<\a p\le d$, $\|\cdot\|_{p'}$ is the $L^{p'}$-norm with respect to the Lebesque
measure in $\R^d$, and $A_{d,\a}$ is the certain constant depending on $d$ and $\a$. It was proved in \cite{MPV} that
\begin{equation}\label{f112}
\g_{s,+}(E)\asymp\g_s(E)\asymp C_{\frac23(d-s),\frac32}(E),\quad 0<s<1,
\end{equation}
where
$$
\g_s(E):=\sup|\langle T,1\rangle|,
$$
and the supremum is taken over all distributions $T$ supported by $E$ such that $T\ast\dfrac{x}{|x|^{s+1}}$ is a function in $L^\infty(\R^d)$ with
$\left\|T\ast\dfrac{x}{|x|^{s+1}}\right\|_\infty\le1$. Theorem \ref{th47} implies that one of the inequalities between
$\g_{s,+}(E)$ and $C_{\frac23(d-s),\frac32}(E)$ holds for wider range of $s$.

\begin{corollary}\label{cor48}
For $0<s<d$, one has
\begin{equation}\label{f412}
\g_{s,+}(E)\ge c\cdot C_{\frac23(d-s),\frac32}(E).
\end{equation}
\end{corollary}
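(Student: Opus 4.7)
My plan is to combine Theorem~\ref{th47} with the classical Wolff inequality from non-linear potential theory. The bridge between the two is the observation that, for the specific parameters $\a=\tfrac23(d-s)$ and $p=\tfrac32$, the potential $W^\mu$ appearing in \eqref{f411'} coincides on the nose with the Wolff potential associated to the capacity $C_{\a,p}$.

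First I would verify this identification: with $\a=\tfrac23(d-s)$ and $p=\tfrac32$ one has $\a p=d-s$ and $\tfrac1{p-1}=2$, so the Wolff potential
$$
\mathcal{W}^\mu_{\a,p}(x):=\int_0^\infty\left[\frac{\mu(B(x,r))}{r^{d-\a p}}\right]^{1/(p-1)}\frac{dr}{r}
$$
is literally equal to $W^\mu(x)=\int_0^\infty[\mu(B(x,r))/r^{s}]^2\,dr/r$. Next I would invoke the Wolff inequality (see \cite{AH}, Theorem~4.5.4), which under the hypotheses $1<p<\infty$ and $0<\a p<d$ (both satisfied here since $0<s<d$) asserts
$$
\int_{\R^d}\mathcal{W}^\mu_{\a,p}(x)\,d\mu(x)\asymp\|I_\a\ast\mu\|_{p'}^{p'}.
$$
With $p'=3$ this gives $\int_{\R^d}W^\mu\,d\mu\asymp\|I_\a\ast\mu\|_3^3$.

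Substituting this equivalence into Theorem~\ref{th47} yields
$$
\g_{s,+}(E)\ge c\sup_{\mu\in M_+(E)}\frac{\|\mu\|^{3/2}}{\|I_\a\ast\mu\|_3^{3/2}}
=c\left[\sup_{\mu\in M_+(E)}\frac{\mu(E)}{\|I_\a\ast\mu\|_3}\right]^{3/2},
$$
and by the dual formula for the Riesz capacity recalled just before \eqref{f112} (with $p=\tfrac32$, $p'=3$) the right-hand side is precisely $c\cdot C_{\frac23(d-s),\frac32}(E)$, which is \eqref{f412}.

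The only non-trivial ingredient is the Wolff inequality itself; everything else is a direct substitution and a routine rewriting using duality. Thus the main (and only) obstacle is to confirm that the parameter ranges in Theorem~4.5.4 of \cite{AH} indeed cover the full range $0<s<d$ for our choice of $\a$ and $p$, which they do.
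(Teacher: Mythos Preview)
Your proposal is correct and follows essentially the same route as the paper: identify $W^\mu$ with the Wolff potential for $\a=\tfrac23(d-s)$, $p=\tfrac32$, apply Wolff's inequality from \cite{AH} to convert the energy in Theorem~\ref{th47} into $\|I_\a\ast\mu\|_3^3$, and then read off $C_{\frac23(d-s),\frac32}(E)$ from the dual formula. The only cosmetic difference is that the paper fixes a near-extremal $\mu$ and chases the constants, whereas you keep the supremum throughout; the content is identical.
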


\begin{proof}
We may assume that $C_{\frac23(d-s),\frac32}(E)>0$. As in \cite{MPV}, our proof is based on the following Wolff's equality
\cite{AH}, p.~110, Theorem~4.5.4: for any $\mu\in M_+(\R^d)$ and $1<p<\infty,\ 0<\a p\le d$,
\begin{equation}\label{f115}
\int_{\R^d}W_{\a,p}^\mu(x)\,d\mu(x)\asymp\|I_\a\ast\mu\|_{p'}^{p'},\quad W_{\a,p}^\mu(x):=
\int_0^\infty\bigg[\frac{\mu(B(x,r))}{r^{d-\a p}}\bigg]^{p'-1}\frac{dr}{r}.
\end{equation}
Take $\a=\frac23(d-s)$, $p=\frac32$. Then $p'=3$, $d-\a p=s$, and
$$
W_{\a,p}^\mu(x)=\int_0^\infty\bigg[\frac{\mu(B(x,r))}{r^s}\bigg]^2\,\frac{dr}r=:W^\mu(x).
$$
Choose $\mu\in M_+(E)$ for which
\begin{equation}\label{f116}
C_{\a,p}(E)<2\|\mu\|^p\|I_\a\ast\mu\|_{p'}^{-p}.
\end{equation}
By  (\ref{f411'}) and (\ref{f115}) we have
$$
\g_{s,+}(E)\ge c\,\|\mu\|^{3/2}\|I_\a\ast\mu\|_{p'}^{-p'/2}\overset{(\ref{f116})}>\frac{c}2\,C_{\frac23(d-s),\frac32}(E),
$$
and we get (\ref{f412}).
\end{proof}

For integer $s\in(0,d)$ the opposite inequality $\g_{s,+}(E)\le C\cdot C_{\frac23(d-s),\frac32}(E)$ is false. In fact, for a smooth $s$-dimensional manifold $E$ in $\R^d$ we have $\g_{s,+}(E)> 0$ by the obvious reason that natural Lebesgue measure on it gives bounded Riesz transform operator (this is from the classical Calder\'on-Zygmund theory). On the other hand, it has been noticed (for example  in \cite{MPV}) that any measure $\mu$ with finite Wolff's energy should have $\mu(B(x,r) ) =o(r^s)$ for $\mu$ a. e. $x$. On a smooth $s$-dimensional manifold it can be only  zero measure, so $C_{\frac23(d-s),\frac32}(E)=0$.
The question about validity of the inequality $\g_{s,+}(E)\le C\cdot C_{\frac23(d-s),\frac32}(E)$ for all non-integer $s\in(0,d)$ remains open.
We believe that this is the case.

\end{document}